\title{Superpolynomial identities of finite-dimensional simple algebras}
\author{Yuri Bahturin}
\thanks{The first author is supported by NSERC Discovery grant \#227060-19}
\address{Department of Mathematics and Statistics, Memorial University of Newfoundland, St. John's, NL, A1C5S7, Canada.}
\email{bahturin@mun.ca}
\author{Felipe Yukihide Yasumura}
\address{Department of Mathematics, Instituto de Matem\'atica e Estat\'istica, Universidade de S\~ao Paulo, SP, Brazil}
\email{fyyasumura@ime.usp.br}
\thanks{The second named author is supported by Fapesp, grants no.~2023/03922-8 and no.~2018/23690-6}
\newtheorem{Thm}{Theorem}
\newtheorem{Lemma}[Thm]{Lemma}
\newtheorem{Prop}[Thm]{Proposition}
\newtheorem{Cor}[Thm]{Corollary}
\theoremstyle{remark}
\newtheorem{Remark}[Thm]{Remark}
\newtheorem*{Problem}{Problem}
\theoremstyle{definition}
\newtheorem{Def}[Thm]{Definition}
\begin{document}
\begin{abstract}
We investigate the Grassmann envelope (of finite rank) of a finite-dimensional $\mathbb{Z}_2$-graded algebra. As a result, we describe the polynomial identities of $G_1(\mathcal{A})$, where $G_1$ stands for the Grassmann algebra with $1$ generator, and $\mathcal{A}$ is a $\mathbb{Z}_2$-graded-simple associative algebra. We also classify the conditions under which two associative $\mathbb{Z}_2$-graded-simple algebras share the same set of superpolynomial identities, i.e., the polynomial identities of its Grassmann envelope (in particular, of finite rank). Moreover, we extend the construction of the Grassmann envelope for the context of $\Omega$-algebras and prove some of its properties. Lastly, we give a description of $\mathbb{Z}_2$-graded-simple $\Omega$-algebras.
\end{abstract}
\maketitle

\section{Introduction}
A remarkable result due to Kemer (see \cite{Kemerbook}) states that, over a field of characteristic zero, every associative PI-algebra is PI-equivalent to the Grassmann envelope of a finite-dimensional associative $\mathbb{Z}_2$-graded algebra. This result, known as Representability Theorem, has profound and important consequences. For instance, Kemer proved that every variety of associative algebras over a field of characteristic zero satisfies the Specht property \cite{Kemerbook}. Another remarkable result is due to Giambruno and Zaicev, and they establish the existence and integrity of the exponent of an associative PI-algebra \cite{GiZabook}.

On the other hand, it is known that polynomial identities completely determine a finite-dimensional prime $\Omega$-algebra over an algebraically closed field \cite{Razmyslovbook} (see also \cite{BY19a,BY19b} and references therein). So, it seems to be interesting to investigate a related question, proposed by I.~Shestakov: do the superpolynomial identities (ie.~the polynomial identities of the Grassmann envelope of a $\mathbb{Z}_2$-graded algebra) completely determine a finite-dimensional graded-simple algebra over an algebraically closed field?

In this paper, we study the Grassmann envelope of finite-dimensional graded-simple associative algebras, with a particular focus on the finitely generated Grassmann algebras. Some of our results are extended to the nonassociative setting. Firstly, we establish a general structural result regarding the finite-dimensional Grassmann envelope of an algebra (see \Cref{structure}). These results are straightforward and follow from simple observations. Next, we determine the polynomial identities of $G_1(\mathcal{A})$, where $\mathcal{A}$ is a finite-dimensional $\mathbb{Z}_2$-graded-simple associative algebra (\Cref{superPI_mnfz} and \Cref{superPI_mkl}). The first one is self-evident, while the second one utilizes the theory developed by Giambruno and Zaicev. Lastly, we classify when two finite-dimensional graded-simple associative algebras satisfy the same set of superpolynomial identities (\Cref{superpol_assoc}). We develop a general theory of the Grassmann envelope for arbitrary $\mathbb{Z}_2$-graded $\Omega$-algebras (\Cref{omega_alg}). Some of the results proved in the associative case carry over to the general setting of an arbitrary signature. Finally, we provide a description of $\mathbb{Z}_2$-graded-simple $\Omega$-algebras (\Cref{gradedsimplealg}).

\section{Preliminaries and Notation}
We assume that $\mathbb{F}$ is an algebraically closed field of characteristic zero. The Grassmann algebra is denoted by $G$ and its generatores are $e_1$, $e_2$, \dots, and $G_m$ stands for the Grassmann algebra generated by the first $m$ elements. We denote by $G^\ast$ and $G_m^\ast$ the respective Grassmann algebras without unity. It is clear that we have a decomposition $G_m=\left(G_m\right)^0\oplus\left(G_m\right)^1$, $G_m^\ast=\left(G_m^\ast\right)^0\oplus\left(G_m^\ast\right)^1$, etc, as sum of elements of even and odd lengths, providing $\mathbb{Z}_2$-graded algebras. If $\mathcal{A}=\mathcal{A}^0\oplus\mathcal{A}^1$ is a $\mathbb{Z}_2$-graded algebra, we define its Grassmann envelope as $G(\mathcal{A})=G^0\otimes\mathcal{A}^0\oplus G^1\otimes\mathcal{A}^1$. In the same way we define $G^\ast(\mathcal{A})$, $G_m(\mathcal{A})$ and $G_m^\ast(\mathcal{A})$.

We denote by $M_{k,\ell}$ the $\mathbb{Z}_2$-graded algebra $(M_{k+\ell},(\underbrace{0,\ldots,0}_\text{$k$ times},\underbrace{1,\ldots,1}_\text{$\ell$ times}))$, and we assume that $k\ge\ell\ge0$ and $k>0$ (the case $\ell=0$ corresponds to the the matrix algebra $M_k(\mathbb{F})$ endowed with the trivial grading). That is,
$$
M_{k,\ell}=\left\{\left(\begin{array}{cc}A&B\\C&D\end{array}\right)\mid A\in M_k, D\in M_\ell, B\in M_{k\times\ell}, C\in M_{\ell\times k}\right\},
$$
and the $\mathbb{Z}_2$-decomposition is
$$
(M_{k,\ell})^0=\left\{\left(\begin{array}{cc}A&0\\0&D\end{array}\right)\right\},\quad (M_{k,\ell})^1=\left\{\left(\begin{array}{cc}0&B\\C&0\end{array}\right)\right\}.
$$

The algebra $M_n(\mathbb{F}\mathbb{Z}_2)=M_n(\mathbb{F})\oplus\varepsilon M_n(\mathbb{F})$, where $\varepsilon^2=1$, has a natural $\mathbb{Z}_2$-grading. The $\mathbb{Z}_2$-grading is given by $(M_n(\mathbb{FZ}_2))^0=M_n(\mathbb{F})$ and $(M_n(\mathbb{FZ}_2))^1=\varepsilon M_n(\mathbb{F})$. It is known that a finite-dimensional $\mathbb{Z}_2$-graded-simple associative algebra over $\mathbb{F}$ is either graded-isomorphic to $M_{k,\ell}$ or $M_n(\mathbb{FZ}_2)$.

The set of polynomial identities of a given algebra $\mathcal{A}$ is denoted by $\mathrm{Id}(\mathcal{A})$. If $\mathcal{A}$ is $\mathbb{Z}_2$-graded, then the set of its $\mathbb{Z}_2$-graded polynomial identities is denoted by $\mathrm{Id}_2(\mathcal{A})$.

Let $\mathcal{A}$ and $\mathcal{B}$ be $\mathbb{Z}_2$-graded algebras. We denote $\mathcal{A}\cong_2\mathcal{B}$ if they are isomorphic as $\mathbb{Z}_2$-graded algebras.

\section{Main results: Associative case}
\subsection{Structure results on $G_m(\mathcal{A})$\label{structure}} First, note that $G_m^\ast(\mathcal{A})$ is an ideal of $G_m(\mathcal{A})$ and $G_m^\ast(\mathcal{A})^{m+1}=0$. In particular, if $m>0$ and $\mathcal{A}^1\ne0$, then $G_m(\mathcal{A})$ is never a semiprime algebra ($G(\mathcal{A})$ is not semiprime as well). Thus, we have the Wedderburn-Malcev decomposition of $G_m(\mathcal{A})$:
\begin{Prop}\label{prop1}
Let $\mathcal{A}=\mathcal{A}^0\oplus\mathcal{A}^1$ be a finite-dimensional associative $\mathbb{Z}_2$-graded algebra. Assume that $\mathcal{A}^0=\mathcal{S}+J$, where $\mathcal{S}$ is a semisimple subalgebra, $J=J(\mathcal{A}^0)$ is the Jacobson radical of $\mathcal{A}^0$, and $\mathcal{S}\cap J=0$. Then, for any $m\in\mathbb{N}$,
$$
G_m(\mathcal{A})=\mathcal{S}+(J+G_m^\ast(\mathcal{A})),
$$
and $J+G_m^\ast(\mathcal{A})$ is the Jacobson radical of $G_m(\mathcal{A})$.
\end{Prop}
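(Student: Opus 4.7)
The first observation is that as vector spaces
\[
G_m(\mathcal{A}) = \mathcal{A}^0 \oplus G_m^\ast(\mathcal{A}),
\]
since $G_m^0 = \mathbb{F}\cdot 1 \oplus (G_m^\ast)^0$ and $G_m^1 = (G_m^\ast)^1$, so that $1\otimes\mathcal{A}^0$ accounts for the unital piece while the rest lies inside $G_m^\ast(\mathcal{A})$. Substituting $\mathcal{A}^0=\mathcal{S}+J$ immediately gives the claimed decomposition $G_m(\mathcal{A})=\mathcal{S}+(J+G_m^\ast(\mathcal{A}))$. The non-trivial content is therefore the identification of $I:=J+G_m^\ast(\mathcal{A})$ as the Jacobson radical.

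I would verify this by checking the three defining properties in turn. First, $I$ is a two-sided ideal: $G_m^\ast(\mathcal{A})$ is already noted at the start of \S\ref{structure} to be an ideal of $G_m(\mathcal{A})$, while $J$ is an ideal of $\mathcal{A}^0$ and multiplies into $G_m^\ast(\mathcal{A})$ under multiplication by homogeneous elements of $G_m^\ast$, so the sum is closed under the $G_m(\mathcal{A})$-action. Second, $I$ is nilpotent: the vector space decomposition above forces $\mathcal{A}^0\cap G_m^\ast(\mathcal{A})=0$, hence also $J\cap G_m^\ast(\mathcal{A})=0$, so $I/G_m^\ast(\mathcal{A})\cong J$ is nilpotent, say $J^r=0$; then $I^r\subseteq G_m^\ast(\mathcal{A})$ and $I^{r(m+1)}\subseteq G_m^\ast(\mathcal{A})^{m+1}=0$ by the opening remark of the section. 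In particular $I\subseteq J(G_m(\mathcal{A}))$.

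Third, the quotient $G_m(\mathcal{A})/I$ is semisimple. Indeed, from $\mathcal{S}\subseteq\mathcal{A}^0$ and $\mathcal{A}^0\cap G_m^\ast(\mathcal{A})=0$ one gets
\[
\mathcal{S}\cap I = \mathcal{S}\cap(J+G_m^\ast(\mathcal{A})) = \mathcal{S}\cap J = 0,
\]
so that $G_m(\mathcal{A}) = \mathcal{S}\oplus I$ as vector spaces and $G_m(\mathcal{A})/I\cong\mathcal{S}$ as algebras. Since $\mathcal{S}$ is semisimple, $J(G_m(\mathcal{A}))\subseteq I$, which combined with the reverse inclusion from the previous paragraph yields $J(G_m(\mathcal{A}))=I$.

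None of the steps presents a genuine obstacle; the only point that requires a moment's care is the computation $\mathcal{A}^0\cap G_m^\ast(\mathcal{A})=0$, which makes both the nilpotence of $I$ and the identification of the Wedderburn complement work cleanly. Everything else is a direct application of the Wedderburn--Malcev setup, the ideal property of $G_m^\ast(\mathcal{A})$, and the nilpotency estimate $G_m^\ast(\mathcal{A})^{m+1}=0$ already established at the top of the subsection.
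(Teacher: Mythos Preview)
Your proof is correct and follows essentially the same approach as the paper: show that $I=J+G_m^\ast(\mathcal{A})$ is a nilpotent ideal (hence contained in the Jacobson radical) and that $G_m(\mathcal{A})/I\cong\mathcal{S}$ is semisimple (forcing the reverse inclusion). You are in fact more careful than the paper's own write-up in justifying why $I$ is an ideal and in making the nilpotency bound $I^{r(m+1)}=0$ explicit.
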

\begin{proof}
Let $N=J+G_m^\ast(\mathcal{A})$. Since $\mathcal{A}^0$ is finite-dimensional, $J$ is a nilpotent ideal of $\mathcal{A}^0$. In addition, as discussed before, $G_m^\ast(\mathcal{A})$ is a nilpotent ideal as well. Thus, $N$ is a nilpotent ideal, so it is contained in the Jacobson radical of $G_m(\mathcal{A})$. Since $G_m(\mathcal{A})/N\cong\mathcal{S}$ is semisimple, we see that $J(G_m(\mathcal{A}))\subseteq N$. Thus, we obtain the equality.
\end{proof}

Applying the result for the graded-simple associative case, the previous theorem reads as:
\begin{Cor}\label{decomp}
Let $m\in\mathbb{N}$. Then, the Wedderburn-Malcev decomposition of the finite-dimensional Grassmann envelope of the finite-dimensional graded-simple associative algebras are the following:
$$
G_m(M_{k,\ell})=M_k\oplus M_\ell+G_m^\ast(M_{k,\ell}),$$
$$G_m(M_n(\mathbb{FZ}_2))=M_n+G_m^\ast(M_n(\mathbb{FZ}_2)),
$$
where $G_m^\ast(M_{k,\ell})$ and $G_m^\ast(M_n(\mathbb{FZ}_2))$ are their respective Jacobson radicals.
\end{Cor}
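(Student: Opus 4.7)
The plan is to apply \Cref{prop1} directly to each of the two families of $\mathbb{Z}_2$-graded-simple algebras, so the only real work is to verify the hypotheses, i.e., to identify the semisimple part and the Jacobson radical of the even component in each case.

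First I would handle $M_{k,\ell}$. By the explicit description of its $\mathbb{Z}_2$-grading given in the preliminaries, the even component is
\[
(M_{k,\ell})^0 = \left\{\left(\begin{array}{cc}A&0\\0&D\end{array}\right)\mid A\in M_k,\ D\in M_\ell\right\} \cong M_k\oplus M_\ell,
\]
which is semisimple. Hence in the Wedderburn--Malcev decomposition of $(M_{k,\ell})^0$ one has $\mathcal{S}=M_k\oplus M_\ell$ and $J((M_{k,\ell})^0)=0$. Plugging this into \Cref{prop1} yields
\[
G_m(M_{k,\ell}) = (M_k\oplus M_\ell) + G_m^\ast(M_{k,\ell}),
\]
with $G_m^\ast(M_{k,\ell})$ as the Jacobson radical.

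Next I would treat $M_n(\mathbb{F}\mathbb{Z}_2)$. By definition, $(M_n(\mathbb{F}\mathbb{Z}_2))^0 = M_n(\mathbb{F})$, which is simple, hence semisimple with zero Jacobson radical. Thus $\mathcal{S}=M_n$ and applying \Cref{prop1} again gives
\[
G_m(M_n(\mathbb{F}\mathbb{Z}_2)) = M_n + G_m^\ast(M_n(\mathbb{F}\mathbb{Z}_2)),
\]
with $G_m^\ast(M_n(\mathbb{F}\mathbb{Z}_2))$ the Jacobson radical.

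There is no real obstacle here; the statement is essentially a specialization of \Cref{prop1} once one observes that the even component of every finite-dimensional $\mathbb{Z}_2$-graded-simple associative algebra over $\mathbb{F}$ (classified in the preliminaries as $M_{k,\ell}$ or $M_n(\mathbb{F}\mathbb{Z}_2)$) is itself semisimple, so the term $J$ in \Cref{prop1} vanishes and only the nilpotent ideal $G_m^\ast(\mathcal{A})$ survives in the radical.
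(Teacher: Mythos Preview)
Your proposal is correct and follows exactly the same approach as the paper: identify $(M_{k,\ell})^0\cong M_k\oplus M_\ell$ and $(M_n(\mathbb{F}\mathbb{Z}_2))^0=M_n(\mathbb{F})$, observe that both are semisimple so $J=0$, and then apply \Cref{prop1}. The paper's proof is just a more compressed version of what you wrote.
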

\begin{proof}
We have $(M_{k,\ell})^0=M_k\oplus M_\ell$ and $(M_n(\mathbb{FZ}_2))^0=M_n(\mathbb{F})$. Hence, both are semisimple and we apply \Cref{prop1}.
\end{proof}

Thus, we are able to compute the PI-exponent of such algebras:
\begin{Cor}\label{exp}
Given $m\in\mathbb{N}$, one has
$$
\mathrm{exp}(G_m(M_{k,\ell}))=k^2+\ell^2,\quad\mathrm{exp}(G_m(M_n(\mathbb{FZ}_2)))=n^2.
$$
In addition,
$$
\mathrm{exp}(G(M_{k,\ell}))=(k+\ell)^2,\quad\mathrm{exp}(G(M_n(\mathbb{FZ}_2)))=2n^2.
$$
\end{Cor}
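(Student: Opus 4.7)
The plan is to apply the Giambruno-Zaicev exponent formula, leveraging the Wedderburn-Malcev decomposition provided by \Cref{decomp}. The argument splits naturally into the finite-rank case $G_m$ and the full envelope $G$.

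For the finite-rank cases $G_m(M_{k,\ell})$ and $G_m(M_n(\mathbb{FZ}_2))$, both algebras are finite-dimensional, so the Giambruno-Zaicev theorem for finite-dimensional associative algebras applies: the exponent equals the maximum of $\dim(S_{i_1}\oplus\cdots\oplus S_{i_r})$ over tuples of distinct simple components of the semisimple quotient such that $S_{i_1}JS_{i_2}J\cdots JS_{i_r}\ne 0$. For $G_m(M_n(\mathbb{FZ}_2))$ the semisimple quotient has the single simple block $M_n$, yielding exponent $n^2$ with no further work. For $G_m(M_{k,\ell})$ with $\ell\ge 1$ and $m\ge 1$ there are two blocks $M_k$ and $M_\ell$, and the key step is to verify they are connected through the radical. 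I would exhibit the explicit witness
$$e_{11}\cdot(e_1\otimes e_{1,k+1})\cdot e_{k+1,k+1}=e_1\otimes e_{1,k+1}\ne 0,$$
where the middle factor is a radical element from $(G_m^\ast)^1\otimes(M_{k,\ell})^1$. This confirms the two blocks are connected, so the exponent is $k^2+\ell^2$. The degenerate case $\ell=0$ (trivial grading on $M_k$) is consistent with $k^2+0^2=k^2$.

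For the full envelopes $G(M_{k,\ell})$ and $G(M_n(\mathbb{FZ}_2))$, the algebras are infinite-dimensional but form the archetypal T-prime PI-algebras. Here I would invoke the classical theorem of Giambruno-Zaicev (\cite{GiZabook}) that the PI-exponent of the Grassmann envelope of a finite-dimensional $\mathbb{Z}_2$-graded-simple associative algebra equals the dimension of the graded algebra. Since $\dim M_{k,\ell}=(k+\ell)^2$ and $\dim M_n(\mathbb{FZ}_2)=2n^2$, the claimed values follow immediately.

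The main technical step is the explicit connectedness verification for the two blocks of $G_m(M_{k,\ell})$, which boils down to a short matrix-unit computation; once that is in hand, the rest of the proof is a direct application of known exponent formulas. A minor subtlety concerns the convention on $\mathbb{N}$: if $0\in\mathbb{N}$, the formula for $G_0(M_{k,\ell})$ with $\ell\ge 1$ fails, since $G_0(M_{k,\ell})=(M_{k,\ell})^0=M_k\oplus M_\ell$ has exponent $k^2$, so the finite-rank statement should be read under the tacit assumption $m\ge 1$.
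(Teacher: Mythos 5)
Your proposal is correct and follows essentially the same route as the paper: the finite-rank cases via the Wedderburn--Malcev decomposition of \Cref{decomp} combined with the Giambruno--Zaicev exponent formula (your explicit matrix-unit witness simply makes precise the paper's remark that $M_kG_m^\ast(M_{k,\ell})M_\ell\ne0$ when $\ell>0$), and the infinite-rank cases by citing the known exponents of the T-prime algebras from \cite{GiZabook}. Your caveat about the convention $0\in\mathbb{N}$ is a fair observation but does not affect the substance of the argument.
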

\begin{proof}
The infinite-dimensional case is well-known (see \cite{GiZabook}). Now, from Giambruno-Zaicev results, it follows that the exponent of $G_m(M_n(\mathbb{FZ}_2))$ coincides with the dimension of its semisimple part. Thus,
$$
\mathrm{exp}(G_m(M_n(\mathbb{FZ}_2)))=\dim M_n=n^2.
$$
Now, if $\ell>0$, then it is easy to see that $M_kG_m^\ast(M_{k,\ell}) M_\ell\ne0$. Therefore, again from Giambruno-Zaicev results, one has
$$
\mathrm{exp}(G_m(M_{k,\ell}))=\dim M_k\oplus M_\ell=k^2+\ell^2.
$$
\end{proof}
As an easy consequence, the numbers $\mathrm{exp}(\lim\limits_\to G_m(\mathcal{A}))$ and $\lim_{m\to\infty}\mathrm{exp}(G_m(\mathcal{A}))$ do not have to coincide.

\subsection{Polynomial identities of $G_1(\mathcal{A})$} We describe the polynomial identities of $G_1(\mathcal{A})$ when $\mathcal{A}$ is a finite-dimensional $\mathbb{Z}_2$-graded-simple associative algebra.

The first family of algebras is an easy remark:
\begin{Prop}\label{superPI_mnfz}
$\mathrm{Id}(G_1(M_n(\mathbb{FZ}_2)))=\mathrm{Id}(M_n)$.
\end{Prop}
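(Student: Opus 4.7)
My plan is to identify the algebra $G_1(M_n(\mathbb{FZ}_2))$ as a matrix algebra over a small commutative ring, and then invoke the standard fact that matrix identities are insensitive to an extension of commutative scalars in characteristic zero.

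First, since $G_1^0=\mathbb{F}$ and $G_1^1=\mathbb{F}e_1$, the definition of the Grassmann envelope gives
\[
G_1(M_n(\mathbb{FZ}_2)) \;=\; (1\otimes M_n)\;\oplus\;(e_1\otimes \varepsilon M_n).
\]
Setting $z:=e_1\otimes\varepsilon$, the relation $e_1^2=0$ and the centrality of $\varepsilon$ in $M_n(\mathbb{FZ}_2)$ force $z^2=0$ and $zA=Az$ for every $A\in 1\otimes M_n$. Hence, as an ungraded $\mathbb{F}$-algebra,
\[
G_1(M_n(\mathbb{FZ}_2))\;\cong\; M_n\otimes_\mathbb{F}\mathbb{F}[z]/(z^2)\;\cong\; M_n\bigl(\mathbb{F}[z]/(z^2)\bigr).
\]

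The inclusion $\mathrm{Id}(G_1(M_n(\mathbb{FZ}_2)))\subseteq \mathrm{Id}(M_n)$ is then immediate, as the degree-$0$ component $1\otimes M_n$ provides a copy of $M_n$ inside $G_1(M_n(\mathbb{FZ}_2))$.

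For the reverse inclusion I would exploit that in characteristic zero $\mathrm{Id}(M_n)$ is generated as a $T$-ideal by its multilinear elements. Given a multilinear identity $f(x_1,\dots,x_r)$ of $M_n$, substitute $x_i=A_i+B_iz$ with $A_i,B_i\in M_n$. Multilinearity expands the value of $f$ as a sum indexed by subsets $S\subseteq\{1,\dots,r\}$; because $z$ is central, the $S$-term equals $z^{|S|}$ times an evaluation of $f$ on a tuple of elements of $M_n$. All terms with $|S|\geq 2$ vanish because $z^2=0$, and the terms with $|S|\leq 1$ vanish because $f$ is a multilinear identity of $M_n$. Therefore $f$ is an identity of $G_1(M_n(\mathbb{FZ}_2))$, yielding the equality.

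There is essentially no obstacle to this argument once the tensor description is in place; the only points that need care are the centrality and nilpotency of $z$, both of which follow directly from the definitions.
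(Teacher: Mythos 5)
Your proof is correct and takes essentially the same approach as the paper: both rest on the identification $G_1(M_n(\mathbb{FZ}_2))\cong M_n(\mathbb{F})\otimes G_1$ (your $\mathbb{F}[z]/(z^2)$ is exactly $G_1$), followed by the standard fact that tensoring with a unital commutative algebra over an infinite field does not change the identities. The only difference is that you verify this last fact by hand via the multilinear expansion in the central nilpotent $z$, whereas the paper simply cites it.
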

\begin{proof}
Note that $G_1(M_n(\mathbb{FZ}_2))=M_n(G_1)=M_n(\mathbb{F})\otimes G_1$. Since $G_1$ is a unital associative and commutative algebra and $\mathbb{F}$ is infinite, then $\mathrm{Id}(M_n(\mathbb{F})\otimes G_1)=\mathrm{Id}(M_n)$.
\end{proof}

For the next family, we have:
\begin{Thm}\label{superPI_mkl}
$\mathrm{Id}(G_1(M_{k,\ell}))=\mathrm{Id}(M_k)\mathrm{Id}(M_\ell)$.
\end{Thm}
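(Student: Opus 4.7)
The plan is to sandwich $\mathrm{Id}(G_1(M_{k,\ell}))$ between two natural T-ideals, using the Wedderburn–Malcev structure from \Cref{decomp} together with Maltsev's classical identification of the identities of a block upper-triangular matrix algebra, and then to close the gap by invoking the Giambruno–Zaicev machinery announced in the text.

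From \Cref{decomp}, $G_1(M_{k,\ell}) = (M_k\oplus M_\ell) + J$ with $J$ square-zero. Because $G_1^1 = \mathbb{F}e_1$, the radical splits further as $J = \mathcal{V}\oplus\mathcal{W}$, where $\mathcal{V} = e_1\otimes M_{k\times\ell}$ sits in the ``upper-right'' off-diagonal block and $\mathcal{W} = e_1\otimes M_{\ell\times k}$ in the ``lower-left''. Using $J^2=0$ and the fact that $M_k$ and $M_\ell$ act on these bimodules from only one side each, I would check that $\mathcal{V}$ and $\mathcal{W}$ are two-sided ideals and that the induced bimodule structures on the quotients match those of the block upper-triangular matrix algebras, so
$$
G_1(M_{k,\ell})/\mathcal{W} \;\cong\; UT(M_k,M_\ell), \qquad G_1(M_{k,\ell})/\mathcal{V} \;\cong\; UT(M_\ell,M_k).
$$
Applying Maltsev's theorem $\mathrm{Id}(UT(M_k,M_\ell)) = \mathrm{Id}(M_k)\mathrm{Id}(M_\ell)$ to the first quotient immediately yields the inclusion $\mathrm{Id}(G_1(M_{k,\ell}))\subseteq\mathrm{Id}(M_k)\mathrm{Id}(M_\ell)$; the analogous statement for the second quotient gives $\mathrm{Id}(G_1(M_{k,\ell}))\subseteq\mathrm{Id}(M_\ell)\mathrm{Id}(M_k)$, and since $\mathcal{V}\cap\mathcal{W}=0$ the combined map $G_1(M_{k,\ell})\hookrightarrow UT(M_k,M_\ell)\oplus UT(M_\ell,M_k)$ is injective, sharpening the picture to
$$
\mathrm{Id}(G_1(M_{k,\ell})) \;=\; \mathrm{Id}(M_k)\mathrm{Id}(M_\ell)\cap\mathrm{Id}(M_\ell)\mathrm{Id}(M_k).
$$

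The reverse inclusion $\mathrm{Id}(M_k)\mathrm{Id}(M_\ell)\subseteq\mathrm{Id}(G_1(M_{k,\ell}))$ is the substantive direction, and this is where the Giambruno–Zaicev theory is brought in. Their exponent computation combined with \Cref{exp} provides $\exp(G_1(M_{k,\ell})) = k^2+\ell^2 = \exp(UT(M_k,M_\ell))$, and their structural description of T-ideals of finite-dimensional PI-algebras in terms of Wedderburn–Malcev data should force the intersection above to collapse onto $\mathrm{Id}(M_k)\mathrm{Id}(M_\ell)$. The \emph{main obstacle} is precisely this last identification: reconciling the intersection with $\mathrm{Id}(M_k)\mathrm{Id}(M_\ell)$ itself (equivalently, matching up the T-ideals of the two possible orientations of the block-triangular algebra). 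This is the step that genuinely requires the Giambruno–Zaicev codimension and exponent apparatus and is not accessible from the elementary structure of \Cref{decomp} and Maltsev's theorem alone.
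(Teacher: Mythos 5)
Your structural analysis is correct and, up to its final step, sharper than what the statement requires: $\mathcal{V}=e_1\otimes M_{k\times\ell}$ and $\mathcal{W}=e_1\otimes M_{\ell\times k}$ are indeed square-zero two-sided ideals of $G_1(M_{k,\ell})$ with $\mathcal{V}\cap\mathcal{W}=0$, the two quotients are the two block-triangular algebras, and the resulting embedding $G_1(M_{k,\ell})\hookrightarrow \mathrm{UT}(k,\ell)\oplus\mathrm{UT}(\ell,k)$ gives
$$
\mathrm{Id}(G_1(M_{k,\ell}))=\mathrm{Id}(M_k)\mathrm{Id}(M_\ell)\cap\mathrm{Id}(M_\ell)\mathrm{Id}(M_k).
$$
This is a genuinely different route from the paper, which instead asserts the containment $\mathrm{Id}(M_k)\mathrm{Id}(M_\ell)\subseteq\mathrm{Id}(G_1(M_{k,\ell}))$ directly from $J^2=0$ and then concludes by comparing exponents, using that $\mathrm{var}(\mathrm{UT}(k,\ell))$ is a minimal variety of exponent $k^2+\ell^2$.

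The genuine gap is the step you defer to ``the Giambruno--Zaicev apparatus'': collapsing your intersection onto $\mathrm{Id}(M_k)\mathrm{Id}(M_\ell)$ amounts to proving $\mathrm{Id}(M_k)\mathrm{Id}(M_\ell)\subseteq\mathrm{Id}(M_\ell)\mathrm{Id}(M_k)$, and no exponent or codimension bookkeeping can deliver this, because it is false for $k>\ell\ge1$: these are the T-ideals of the mutually anti-isomorphic, but not PI-equivalent, algebras $\mathrm{UT}(k,\ell)$ and $\mathrm{UT}(\ell,k)$. Concretely, $f=s_{2k}(x_1,\ldots,x_{2k})\,s_{2\ell}(x_{2k+1},\ldots,x_{2k+2\ell})$ lies in $\mathrm{Id}(M_k)\mathrm{Id}(M_\ell)$ by Amitsur--Levitzki, yet the descending staircase of \Cref{nonidentity_mkl} gives $s_{2k}(e_1\otimes e_{k+1,k},e_{kk},e_{k,k-1},\ldots,e_{11})=e_1\otimes e_{k+1,1}$, the ascending staircase inside $M_k$ gives $s_{2\ell}(e_{11},e_{12},e_{22},\ldots,e_{\ell,\ell+1})=e_{1,\ell+1}$ (available precisely because $\ell<k$), and the product of these two values is $e_1\otimes e_{k+1,\ell+1}\ne0$ in $G_1(M_{k,\ell})$. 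For $k=2$, $\ell=1$ this is $s_4(x_1,\ldots,x_4)[x_5,x_6]$ evaluated at $e_1\otimes e_{32},\,e_{22},\,e_{21},\,e_{11},\,e_{11},\,e_{12}$, with value $e_1\otimes e_{32}$. Hence your intersection is \emph{strictly} smaller than $\mathrm{Id}(M_k)\mathrm{Id}(M_\ell)$ whenever $k>\ell\ge1$ (the two sides agree when $k=\ell$). The same evaluation shows that the inclusion the paper calls ``clear'' fails in this range: modulo $J^2=0$ the product $fg$ evaluates into $J\cdot M_k\subseteq e_1\otimes M_{\ell\times k}$, which is nonzero, in contrast with $\mathrm{UT}(k,\ell)$, where the radical annihilates $M_k$ from the right; consequently the minimality argument cannot be applied, since $\mathrm{var}(G_1(M_{k,\ell}))=\mathrm{var}(\mathrm{UT}(k,\ell)\oplus\mathrm{UT}(\ell,k))$ strictly contains the minimal variety $\mathrm{var}(\mathrm{UT}(k,\ell))$ while having the same exponent. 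Your identification $\mathrm{Id}(G_1(M_{k,\ell}))=\mathrm{Id}(\mathrm{UT}(k,\ell)\oplus\mathrm{UT}(\ell,k))$ appears to be the correct form of the result, and you should not expect the last step of your plan to go through as stated.
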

\begin{proof}
From \Cref{decomp}, one has
$$
G_1(M_{k,\ell})=M_k\oplus M_\ell+J,
$$
where $J^2=0$. Thus, clearly $\mathrm{Id}(M_k)\mathrm{Id}(M_\ell)\subseteq\mathrm{Id}(G_1(M_{k,\ell}))$. From Giambruno-Zaicev results, it is known that
$$
\mathrm{Id}(M_k)\mathrm{Id}(M_\ell)=\mathrm{Id}(\mathrm{UT}(k,\ell)),
$$
where $\mathrm{UT}(k,\ell)$ is the upper block-triangular matrix algebra. Moreover, again from Giambruno-Zaicev result, $\mathrm{UT}(k,\ell)$ generates a minimal variety of exponent $k^2+\ell^2$. Since this number equals the exponent of $G_1(M_{k,\ell})$ (\Cref{exp}), it follows that
$$
\mathrm{Id}(M_k)\mathrm{Id}(M_\ell)=\mathrm{Id}(G_1(M_{k,\ell})).
$$
\end{proof}

\noindent\textbf{Question.} Describe the polynomial identities of $G_m(\mathcal{A})$ when $\mathcal{A}$ is an associative $\mathbb{Z}_2$-graded-simple algebra.

\subsection{Distinguishing simple algebras by their superpolynomial identities} We use the notation $G_\infty(\mathcal{A}):= G(\mathcal{A})$. First, note that, if $\mathcal{A}$ is ungraded, that is $\mathcal{A}^0=\mathcal{A}$ and $\mathcal{A}^1=0$, then for any $m\in\mathbb{N}\cup\{\infty\}$, one has
$$
G_m(\mathcal{A})=(G_m)^0\otimes\mathcal{A}.
$$
Since $(G_m)^0$ is a unital associative and commutative algebra and $\mathbb{F}$ is infinite, then $\mathrm{Id}(\mathcal{A})=\mathrm{Id}(G_m(\mathcal{A}))$. Thus, combining with \Cref{superPI_mnfz}, we have our first remark:
\begin{Lemma}\label{g1_ordinary}
For any $m\in\mathbb{N}\cup\{\infty\}$,
$$
\mathrm{Id}(G_1(M_n(\mathbb{FZ}_2)))=\mathrm{Id}(G_m(M_n(\mathbb{F}))),
$$
but $M_n(\mathbb{FZ}_2)\not\cong M_n(\mathbb{F})$.\qed
\end{Lemma}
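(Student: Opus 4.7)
The plan is to assemble the statement directly from the two observations sitting on either side of it, each of which has already been carried out in the excerpt.

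First I would handle the equality of identity ideals by matching both sides to $\mathrm{Id}(M_n(\mathbb{F}))$. The right-hand side $\mathrm{Id}(G_m(M_n(\mathbb{F})))$ falls under the ungraded remark preceding the lemma: since $M_n(\mathbb{F})$ is concentrated in degree $0$, the Grassmann envelope collapses to $(G_m)^0\otimes M_n(\mathbb{F})$, and because $(G_m)^0$ is a unital commutative associative $\mathbb{F}$-algebra over the infinite field $\mathbb{F}$, tensoring by it does not change the T-ideal, so $\mathrm{Id}(G_m(M_n(\mathbb{F})))=\mathrm{Id}(M_n(\mathbb{F}))$ for every $m\in\mathbb{N}\cup\{\infty\}$. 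The left-hand side $\mathrm{Id}(G_1(M_n(\mathbb{FZ}_2)))$ equals $\mathrm{Id}(M_n(\mathbb{F}))$ by \Cref{superPI_mnfz}. Chaining the two equalities yields the asserted identity of T-ideals.

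For the non-isomorphism of the algebras themselves, I would simply compare dimensions: $\dim_{\mathbb{F}}M_n(\mathbb{FZ}_2)=2n^2\neq n^2=\dim_{\mathbb{F}}M_n(\mathbb{F})$, so no $\mathbb{F}$-algebra isomorphism can exist.

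There is really no obstacle here; the content of the lemma is exactly that the ungraded reduction of the tensor-product trick gives the same T-ideal as the $m=1$ Grassmann envelope of $M_n(\mathbb{FZ}_2)$, a coincidence whose two halves are already available. The lemma is stated mainly as a sanity check showing that, in sharp contrast with the graded setting, superpolynomial identities alone cannot distinguish $M_n(\mathbb{FZ}_2)$ from $M_n(\mathbb{F})$, which motivates the finer analysis that follows in \Cref{superpol_assoc}.
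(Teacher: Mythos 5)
Your proposal is correct and is exactly the paper's argument: the lemma carries a \qed because it is obtained by chaining the ungraded observation $\mathrm{Id}(G_m(M_n(\mathbb{F})))=\mathrm{Id}((G_m)^0\otimes M_n(\mathbb{F}))=\mathrm{Id}(M_n)$ with \Cref{superPI_mnfz}, and the non-isomorphism is a dimension count. Nothing to add.
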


Now, we shall investigate the algebras $M_{k,\ell}$.
\begin{Lemma}\label{nonidentity_mkl}
Let $m\in\mathbb{N}$. Then $\mathrm{Id}(M_k)^{m+1}\subseteq\mathrm{Id}(G_m(M_{k,\ell}))$. In addition, the product of $m$ standard polynomials of degree $2k$ is not a PI for $M_{k,\ell}$, that is,
$$
s_{2k}(x_1,\ldots,x_{2k})s_{2k}(x_{2k+1},\ldots,x_{4k})\cdots s_{2k}(x_{2k(m-1)+1},\ldots,x_{2km})\notin\mathrm{Id}(G_m(M_{k,\ell})).
$$
\end{Lemma}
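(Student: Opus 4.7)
For the first inclusion, I would combine the Wedderburn--Malcev decomposition in \Cref{decomp}, $G_m(M_{k,\ell}) = (M_k \oplus M_\ell) + N$ with $N := G_m^\ast(M_{k,\ell})$ and $N^{m+1}=0$, with the observation that the non-unital embedding $M_\ell\hookrightarrow M_k$ (available since $k\ge\ell$) gives $\mathrm{Id}(M_k)\subseteq\mathrm{Id}(M_\ell)$, hence $\mathrm{Id}(M_k)\subseteq\mathrm{Id}(M_k\oplus M_\ell)$. For a multilinear $f\in\mathrm{Id}(M_k)$ and any substitution $x_i=s_i+n_i$ in $G_m(M_{k,\ell})$ with $s_i\in M_k\oplus M_\ell$ and $n_i\in N$, the multilinear expansion of $f(x_1,\ldots,x_n)$ has the all-$s$ term equal to zero by the inclusion above, and every other term lies in the ideal $N$; so every evaluation of $f$ lies in $N$, and a product of $m+1$ such polynomials evaluates into $N^{m+1}=0$.

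For the non-identity statement, my plan is to build an explicit nonvanishing substitution. Because the only Grassmann generators available are $e_1,\ldots,e_m$, a parity-and-degree analysis forces a rigid shape on any nonzero evaluation: each of the $m$ factors $s_{2k}$ must contribute a Grassmann monomial of total degree exactly one (higher total degree would either repeat some $e_r$ or push the overall Grassmann product above $m$; total degree zero would force all variables of that factor into $M_k\oplus M_\ell$, on which $s_{2k}$ is already an identity by Amitsur--Levitzki, since $k\ge\ell$). Accordingly, in the $i$-th factor I substitute $2k-1$ variables of the form $1\otimes a_{i,j}$ with $a_{i,j}\in M_k$, and one variable of the form $e_i\otimes b_i$ with $b_i$ an odd matrix unit, alternating $b_i=E_{1,k+1}$ for odd $i$ and $b_i=E_{k+1,1}$ for even $i$.

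The factor-by-factor computation collapses dramatically: since $E_{1,k+1}\cdot a=0$ and $a\cdot E_{k+1,1}=0$ for any $a\in M_k$, only the summands of $s_{2k}$ with the odd matrix unit at the extreme right (resp.\ left) position survive, giving
\[ s_{2k}\bigl(1\otimes a_{i,1},\ldots,1\otimes a_{i,2k-1},\,e_i\otimes b_i\bigr) = \pm\, e_i\otimes A_i\cdot E_{1,k+1} \text{ or } \pm\, e_i\otimes E_{k+1,1}\cdot A_i, \]
where $A_i:=s_{2k-1}(a_{i,1},\ldots,a_{i,2k-1})\in M_k$. Since $s_{2k-1}$ is not an identity of $M_k$ (Amitsur--Levitzki), I can choose the $a_{i,j}$ so that each $A_i$ has nonzero $(1,1)$-entry; a concrete prototype is $s_3(E_{11},E_{12},E_{21})=2E_{11}+E_{22}$ in the case $k=2$, with analogous matrix-unit evaluations available for general $k$. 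Multiplying the $m$ contributions, the Grassmann part is $\pm e_1e_2\cdots e_m\neq 0$, while the matrix part telescopes via $E_{1,k+1}E_{k+1,1}=E_{11}$ to a nonzero scalar multiple of $E_{11}$ (for $m$ even) or of $E_{1,k+1}$ (for $m$ odd). The one delicate point is this chain of rank-one products: one must verify that at each step the successive column/row vectors pair nontrivially, which reduces precisely to the nonvanishing of the $(1,1)$-entries of the $A_i$'s arranged above.
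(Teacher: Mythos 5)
Your proof of the inclusion $\mathrm{Id}(M_k)^{m+1}\subseteq\mathrm{Id}(G_m(M_{k,\ell}))$ is correct and is essentially the paper's argument: every identity of $M_k$ is an identity of the semisimple quotient $G_m(M_{k,\ell})/G_m^\ast(M_{k,\ell})\cong M_k\oplus M_\ell$ (using $k\ge\ell$), so all of its values lie in the radical $N=G_m^\ast(M_{k,\ell})$, and $N^{m+1}=0$.

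For the non-identity, your overall strategy --- one odd, Grassmann-carrying entry per factor of $s_{2k}$, with distinct generators $e_1,\ldots,e_m$ whose product $e_1\cdots e_m$ is nonzero --- is exactly the paper's, but your choice of substitution leaves a genuine gap. Taking the odd entries to be $E_{1,k+1}$ and $E_{k+1,1}$ and the remaining $2k-1$ entries inside $M_k$, each factor collapses (correctly, as you argue) to $\pm e_i\otimes A_iE_{1,k+1}$ or $\pm e_i\otimes E_{k+1,1}A_i$ with $A_i=s_{2k-1}(a_{i,1},\ldots,a_{i,2k-1})$, and the telescoped matrix product is $A_1E_{11}(A_2A_3)E_{11}(A_4A_5)E_{11}\cdots$. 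Its nonvanishing therefore requires $(A_{2j}A_{2j+1})_{11}\ne0$ for each interior pair (together with a nonzero first column of $A_1$, and first row of the last $A$), and this does \emph{not} reduce to the nonvanishing of the individual $(1,1)$-entries: for instance $A=E_{11}+E_{12}-E_{21}-E_{22}$ has $A_{11}=1$ but $A^2=0$. Moreover, the standard staircase evaluation gives $A_i=E_{1k}$, whose first column and $(1,1)$-entry vanish for $k>1$, so you genuinely need a nonstandard evaluation; your $s_3(E_{11},E_{12},E_{21})=2E_{11}+E_{22}$ handles only $k=2$, and the existence of matrix-unit values of $s_{2k-1}$ with the required properties for all $k$ is asserted rather than proved. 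The paper sidesteps all of this by choosing the odd entries to be $E_{k,k+1}$, placed after the staircase $E_{11},E_{12},\ldots,E_{kk}$, and $E_{k+1,k}$, placed before the reversed staircase: each factor then evaluates exactly to $e_i\otimes E_{1,k+1}$ or $e_i\otimes E_{k+1,1}$, and the product telescopes to $e_1\cdots e_m\otimes E_{1,t'}$ with nothing left to check. Either adopt that choice of odd matrix units, or supply, for every $k$, explicit values $A_i$ of $s_{2k-1}$ making your chain of rank-one products nonzero.
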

\begin{proof}
The first assertion follows from \Cref{decomp}, since
$$
\mathrm{Id}(M_k)\subseteq\mathrm{Id}(G_m(M_{k,\ell})/G_m^\ast(M_{k,\ell})), 
$$
and $G_m^\ast(M_{k,\ell})^{m+1}=0$. To prove the last assertion, note that
$$
s_{2k}(e_{11},e_{12},\ldots,e_{k-1,k},e_{kk},e_ie_{k,k+1})=e_ie_{1,k+1},
$$
$$
s_{2k}(e_ie_{k+1,k},e_{kk},e_{k,k-1},\ldots,e_{21},e_{11})=e_ie_{k+1,1},
$$
Thus, we may find an evaluation on the product of $m$ standard polynomials giving
$$
(e_1e_{1,k+1})(e_2e_{k+1,1})\cdots(e_me_{t,t'})=e_1\cdots e_m e_{1,t'}\ne0,
$$
where $(t,t')=(1,k+1)$ if $m$ is odd, or $(t,t')=(k+1,1)$ otherwise.
\end{proof}

\begin{Lemma}\label{casemkl}
Let $m,m'\in\mathbb{N}\cup\{\infty\}$. If $\mathrm{Id}(G_m(M_{k,\ell}))=\mathrm{Id}(G_{m'}(M_{k',\ell'}))$, then $m=m'$, $k=k'$ and $\ell=\ell'$.
\end{Lemma}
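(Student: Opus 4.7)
The plan is to extract from $\mathrm{Id}(G_m(M_{k,\ell}))$ alone enough numerical invariants to recover $m$, $k$, and $\ell$ separately, proceeding in three steps.

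\emph{Step 1: Detecting whether $m$ is finite.} By \Cref{nonidentity_mkl}, $s_{2k}^{m+1}$ lies in $\mathrm{Id}(G_m(M_{k,\ell}))$ whenever $m<\infty$. On the other hand, the same matrix-unit construction used in the proof of \Cref{nonidentity_mkl}, carried out inside $G(M_{k,\ell})$ with pairwise distinct Grassmann generators across all factors, gives a non-vanishing evaluation $e_1\cdots e_N\cdot e_{1,t'}$ of $s_{2k}^N$ for every $N\geq 1$. Hence the condition ``some power of some standard polynomial is an identity'' is equivalent to $m<\infty$ and depends only on the ideal; this forces $m$ and $m'$ to have the same finiteness type.

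\emph{Step 2: Both finite.} I would consider the invariant
\[
k^{\ast} := \min\{d \geq 1 : s_{2d}^N \in \mathrm{Id}(G_m(M_{k,\ell})) \text{ for some } N \geq 1\}.
\]
Since $M_k$ is a quotient of $G_m(M_{k,\ell})$ by \Cref{decomp}, $\mathrm{Id}(G_m(M_{k,\ell})) \subseteq \mathrm{Id}(M_k)$. For $d<k$, Amitsur--Levitzki gives $s_{2d}\notin\mathrm{Id}(M_k)$, and verbal primeness of $\mathrm{Id}(M_k)$ (classical, since $M_k$ is prime) forbids any positive power of $s_{2d}$ from lying in $\mathrm{Id}(M_k)$, hence from lying in the smaller $\mathrm{Id}(G_m(M_{k,\ell}))$. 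For $d=k$, \Cref{nonidentity_mkl} already supplies $s_{2k}^{m+1}\in\mathrm{Id}(M_k)^{m+1}\subseteq\mathrm{Id}(G_m(M_{k,\ell}))$. Thus $k^{\ast}=k$, so $k=k'$. With $k$ now determined, the companion invariant $N^{\ast} := \min\{N : s_{2k}^N \in \mathrm{Id}\}$ equals $m+1$ by \Cref{nonidentity_mkl}, giving $m=m'$. Finally \Cref{exp} yields $\ell^2=\mathrm{exp}-k^2$, whence $\ell=\ell'$.

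\emph{Step 3: Both infinite.} From \Cref{exp} we obtain $k+\ell=k'+\ell'$. To separate $k$ from $\ell$ I would appeal to Kemer's classification of verbally prime T-ideals over a field of characteristic zero: the ideals $\mathrm{Id}(G(M_{k,\ell}))$ are pairwise distinct as $(k,\ell)$ ranges over pairs with $k\geq\ell\geq 1$, yielding $(k,\ell)=(k',\ell')$.

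The hard part is Step 3, which in this plan depends on Kemer's T-prime classification external to the excerpt; Steps 1 and 2 are handled directly by the two-invariant extraction, using only \Cref{decomp}, \Cref{exp}, and \Cref{nonidentity_mkl}. One subtlety worth verifying is implicit throughout: the argument tacitly assumes $\ell,\ell'\geq 1$, as the $\ell=0$ case reduces to $M_k$ with trivial grading and $\mathrm{Id}(G_m(M_{k,0}))=\mathrm{Id}(M_k)$ independently of $m$, so that case must be excluded (or absorbed into \Cref{g1_ordinary}).
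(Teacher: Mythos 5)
Your proposal follows the paper's strategy closely: both rest on \Cref{decomp}, \Cref{exp}, \Cref{nonidentity_mkl} and an appeal to Kemer's theory for the case $m=m'=\infty$. The one genuine difference is that you pin down $k$ and the finiteness of $m$ via powers of standard polynomials (the invariants $k^{\ast}$ and $N^{\ast}$), whereas the paper uses the product $(\mathrm{Cap}_{k^2+1})^{m+1}$ of Capelli polynomials together with \Cref{nocapelli}. Your version reads $m+1$ directly off the $T$-ideal as $N^{\ast}$ instead of running the two-sided inequality $m\le m'$, $m'\le m$, which is slightly cleaner. Your closing caveat about $\ell=0$ is also a genuine catch: the matrix units $e_{k,k+1},e_{k+1,k}$ used in \Cref{nonidentity_mkl} exist only when $\ell\ge1$, and for $\ell=\ell'=0$ one has $\mathrm{Id}(G_m(M_{k,0}))=\mathrm{Id}(M_k)$ for every $m$, so the conclusion $m=m'$ fails; the paper's statement and proof carry the same unstated restriction.

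Two of your justifications need repair. First, the backward direction of Step 1 requires that \emph{no} power of \emph{any} standard polynomial $s_{2d}$ is an identity of $G(M_{k',\ell'})$ --- the relevant $d$ comes from the other algebra and need not equal $k'$ --- but the matrix-unit construction of \Cref{nonidentity_mkl} only produces nonvanishing evaluations of $s_{2k'}$. The claim is still true for $\ell'\ge1$: the graded subalgebra of $M_{k',\ell'}$ spanned by $e_{11}+e_{k'+1,k'+1}$ and $e_{1,k'+1}+e_{k'+1,1}$ is a copy of $\mathbb{F}\mathbb{Z}_2$, so $G$ embeds into $G(M_{k',\ell'})$, and $s_n(e_1,\ldots,e_n)\cdots s_n(e_{(N-1)n+1},\ldots,e_{Nn})=(n!)^N e_1\cdots e_{Nn}\ne0$ in $G$; alternatively, follow the paper and use \Cref{nocapelli}. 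Second, ``verbal primeness of $\mathrm{Id}(M_k)$'' is not the right tool to exclude $s_{2d}^N\in\mathrm{Id}(M_k)$ for $d<k$: verbal primeness concerns products of $T$-ideals, and $f^N\in\mathrm{Id}(A)$ does not by itself yield an inclusion of a product of $T$-ideals. What you actually need is that a multilinear non-identity of $M_k$ takes a non-nilpotent value (its values span a nonzero conjugation-invariant subspace of $M_k$, which necessarily contains a non-nilpotent matrix), so that evaluating all $N$ blocks of $s_{2d}^N$ at the same tuple gives $c^N\ne0$. With these two points patched, your argument is complete, modulo the deferral to Kemer in Step 3, which is exactly what the paper does as well.
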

\begin{proof}
Assume that $\mathrm{Id}(G_m(M_{k,\ell}))=\mathrm{Id}(G_{m'}(M_{k',\ell'}))$. If $m<\infty$, from \Cref{nonidentity_mkl}, $\underbrace{\mathrm{Cap}_{k^2+1}\cdots\mathrm{Cap}_{k^2+1}}_\text{$m+1$ times}\in\mathrm{Id}(G_m(M_{k,\ell}))$. Thus $k'\le k$ and $m'<\infty$. Reversing the argument, we see that $k=k'$. Hence, from \Cref{nonidentity_mkl}, since $\mathrm{Id}(M_k)^{m}\not\subseteq\mathrm{Id}(G_{m'}(M_{k',\ell'}))$, we obtain that $m\le m'$. Repeating the argument, one obtains $m=m'$. Finally, from \Cref{exp}, one has
$$
k^2+\ell^2=\mathrm{exp}(G_m(M_{k,\ell}))=\mathrm{exp}(G_{m'}(M_{k',\ell'}))=k^{\prime2}+\ell^{\prime2}.
$$
Thus, $\ell=\ell'$. The same conclusion holds valid if we start assuming that $m'<\infty$.

So we may assume that $m=m'=\infty$. However, this case follows from the classical Kemer's theory.
\end{proof}

Finally, we shall deal with the situation of $M_n(\mathbb{FZ}_2)$.
\begin{Lemma}\label{casemnfz2}
We have $\lfloor\frac{m}2\rfloor=\lfloor\frac{m'}2\rfloor$ if, and only if, $\mathrm{Id}(G_m(M_n(\mathbb{FZ}_2)))=\mathrm{Id}(G_{m'}(M_n(\mathbb{FZ}_2))$.
\end{Lemma}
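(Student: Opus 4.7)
My plan is to reduce the lemma to an intrinsic statement about $M_n(G_m)$ and then analyse its identities through a parity decomposition that collapses the constraints of odd weight onto those of even weight. Because $\varepsilon\in\mathbb{FZ}_2$ is central with $\varepsilon^2=1$, the linear map sending $a\otimes A\in G_m^0\otimes M_n$ to $a\otimes A$ and $b\otimes\varepsilon B\in G_m^1\otimes\varepsilon M_n$ to $b\otimes B$ is an $\mathbb{F}$-algebra isomorphism $G_m(M_n(\mathbb{FZ}_2))\cong G_m\otimes M_n=M_n(G_m)$, so the lemma becomes $\mathrm{Id}(M_n(G_m))=\mathrm{Id}(M_n(G_{m'}))$ if and only if $\lfloor m/2\rfloor=\lfloor m'/2\rfloor$.

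For multilinear $f=\sum_{\sigma\in S_d}\alpha_\sigma x_{\sigma(1)}\cdots x_{\sigma(d)}$ and $\epsilon\in\{0,1\}^d$, let $\mathrm{sgn}_\sigma(\epsilon)=(-1)^{N(\sigma,\epsilon)}$ where $N(\sigma,\epsilon)=\#\{(i,j):i<j,\ \sigma^{-1}(i)>\sigma^{-1}(j),\ \epsilon_i=\epsilon_j=1\}$, and put $f_\epsilon=\sum_\sigma\alpha_\sigma\mathrm{sgn}_\sigma(\epsilon) x_{\sigma(1)}\cdots x_{\sigma(d)}$. Evaluating $f$ on simple tensors $M_i\otimes g_i$ with $g_i\in G_m^{\epsilon_i}$ homogeneous yields $f(M\otimes g)=f_\epsilon(M)\otimes(g_1\cdots g_d)$. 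Since $g_1\cdots g_d$ can be chosen non-zero in $G_m$ exactly when $|\epsilon|_1:=\#\{i:\epsilon_i=1\}\le m$, it follows that $f\in\mathrm{Id}(M_n(G_m))$ if and only if $f_\epsilon\in\mathrm{Id}(M_n)$ for every $\epsilon$ with $|\epsilon|_1\le m$.

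The main obstacle is the symmetric-group identity
$$\mathrm{sgn}_\sigma(\epsilon)=\sum_{j:\,\epsilon_j=1}(-1)^{r_j(\epsilon)}\mathrm{sgn}_\sigma(\epsilon^{(j)}),\qquad r_j(\epsilon):=\#\{i>j:\epsilon_i=1\},$$
valid for every $\sigma\in S_d$ whenever $|\epsilon|_1$ is odd; here $\epsilon^{(j)}$ is $\epsilon$ with its $j$-th coordinate flipped to $0$. I would verify it by passing to the induced permutation $\tau\in S_k$ of the $k=|\epsilon|_1$ odd slots: a direct inversion count gives $p+I_p(\tau)\equiv\tau(p)\pmod 2$, where $I_p(\tau)$ counts the inversions of $\tau$ involving position $p$, and then $\sum_{p=1}^{k}(-1)^{\tau(p)}=-1$ for $k$ odd closes the computation. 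Applied to $f_\epsilon$ this yields $f_\epsilon=\sum_{j:\,\epsilon_j=1}(-1)^{r_j(\epsilon)}f_{\epsilon^{(j)}}$, so the constraints at odd $|\epsilon|_1$ are subsumed by those at the adjacent even weight, and one concludes $\mathrm{Id}(M_n(G_m))=\{f:f_\epsilon\in\mathrm{Id}(M_n)\text{ for all } |\epsilon|_1\le 2\lfloor m/2\rfloor\}$. This set depends on $m$ only through $\lfloor m/2\rfloor$, which proves the ``if'' direction.

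For the converse, when $k=\lfloor m/2\rfloor<\lfloor m'/2\rfloor$, I would take a multilinear central polynomial $c$ of $M_n$ (e.g.\ Formanek's) and form
$$f=[c(Y^{(1)}),c(Z^{(1)})]\cdots[c(Y^{(k+1)}),c(Z^{(k+1)})]$$
on disjoint blocks of variables. Using the product rule for super-symmetrization, together with the fact that $[A,B]_\epsilon$ equals a commutator or an anticommutator of $A_{\epsilon_A},B_{\epsilon_B}$ depending on whether $|\epsilon_A|_1$ and $|\epsilon_B|_1$ are both odd, one verifies that each $f_\epsilon$ with $|\epsilon|_1\le 2k$ factors through a commutator of central $M_n$-el
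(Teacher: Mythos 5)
Your argument is genuinely different from the paper's. The paper disposes of the ``if'' direction in one line by citing the PI-equivalence of $G_{2m}$ and $G_{2m+1}$ together with the tensor-product theorem \cite[Theorem 7.2.3]{AGPR}, and gets the ``only if'' direction from Frenkel's computation \cite{Frenkel} of the minimal Capelli degree $n^2+\lfloor m/2\rfloor+1$ of $M_n(G_m)$. You instead make everything explicit through the parity decomposition $f\mapsto\{f_\epsilon\}$: the reduction of odd-weight constraints to even-weight ones via the sign identity (which I checked — the congruence $p+I_p(\tau)\equiv\tau(p)\pmod 2$ and the vanishing/non-vanishing of $\sum_p(-1)^{\tau(p)}$ according to the parity of $k$ are correct, and the cross-factor inversions indeed do not contribute) gives a self-contained proof of the ``if'' direction, at the price of more computation; and your separating polynomial replaces Frenkel's Capelli-degree theorem by an elementary construction from a multilinear central polynomial. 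Both of your external inputs (existence of Formanek's central polynomial, multilinearization in characteristic zero) are cheaper than the paper's.

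There is, however, a gap: the converse direction breaks off mid-sentence, precisely at the step that needs care. You must show that for every $\epsilon$ with $|\epsilon|_1\le 2k$ (indeed $|\epsilon|_1\le m\le 2k+1$ suffices, so the even-weight reduction is not even needed here), some factor of $f_\epsilon=\prod_i\bigl[c(Y^{(i)}),c(Z^{(i)})\bigr]_{\epsilon_i}$ vanishes identically on $M_n$. The correct pigeonhole is: if $u_i,v_i$ denote the numbers of odd variables in $Y^{(i)},Z^{(i)}$, then a factor already vanishes as soon as $u_i=0$ \emph{or} $v_i=0$, since then $u_iv_i$ is even, the factor is a genuine commutator, and one of its two entries is the unperturbed central polynomial $c$; and if every factor had $u_i\ge 1$ and $v_i\ge 1$ one would get $|\epsilon|_1\ge 2(k+1)>2k+1$, a contradiction. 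If instead you require a factor with $u_i=v_i=0$ (a commutator of \emph{two} central values, which is what the truncated phrase ``commutator of central $M_n$-el[ements]'' suggests), the count fails already for $k\ge 1$, e.g.\ four odd variables spread as $(2,1,1)$ over three factors. You should also record the non-identity evaluation explicitly: sending one variable of each $c$-block to $M\otimes e_j$ with distinct generators and the rest to $M'\otimes 1$ gives each block the value $\gamma I_n\otimes e_j$ with $\gamma\ne 0$, whence $f$ evaluates to a nonzero multiple of $I_n\otimes e_1\cdots e_{2k+2}$ in $M_n(G_{m'})$ because $2(k+1)\le 2\lfloor m'/2\rfloor\le m'$. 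With these two points written out, your proof is complete and correct.
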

\begin{proof}
Since $G_m(M_n(\mathbb{FZ}_2))\cong M_n(G_m)$, and $G_{2m+1}$ is PI-equivalent to $G_{2m}$, we obtain from \cite[Theorem 7.2.3]{AGPR} that
$$
\mathrm{Id}(G_{2m+1}(M_n(\mathbb{FZ}_2)))=\mathrm{Id}(G_{2m}(M_n(\mathbb{FZ}_2)).
$$
Since $\mathrm{char}\,\mathbb{F}=0$, Frenkel \cite[Theorem 5 and Proposition 6]{Frenkel} found the minimal degree of a Capelli identity satisfied by $G_m(M_n(\mathbb{FZ}_2))$, which is $n^2+\lfloor m/2\rfloor+1$. This proves the converse of the statement.
\end{proof}

Now, we shall prove that $M_{k,\ell}$ and $M_n(\mathbb{FZ}_2)$ do not satisfy the same set of superpolynomial identities, with the exception of the case described in \Cref{g1_ordinary}. We start with a well-known result:
\begin{Lemma}\label{nocapelli}
The algebras $G(M_n(\mathbb{FZ}_2))$ and $G(M_{k,\ell})$ (when $\ell>0$) satisfy no Capelli identity.
\end{Lemma}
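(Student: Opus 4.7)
The plan is to exhibit, for every $N\ge 1$, a nonzero evaluation of the Capelli polynomial $\mathrm{Cap}_N$ in each of the two algebras, which immediately shows that none of the Capelli polynomials is an identity. The key observation is that the Grassmann generators $e_1,e_2,\ldots$ anticommute in $G$, so a single odd element $X\in\mathcal{A}^1$ whose powers are all nonzero suffices to transport this anticommutation into the envelope: set $x_i=e_i\otimes X\in G^1\otimes\mathcal{A}^1$ and take $y_j=1\otimes I$ for the remaining arguments of $\mathrm{Cap}_N$.

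For $\mathcal{A}=M_n(\mathbb{FZ}_2)$, take $X=\varepsilon\cdot I_n\in\varepsilon M_n=(M_n(\mathbb{FZ}_2))^1$, so that $X^2=I_n$ and in particular $X^N\ne 0$. For $\mathcal{A}=M_{k,\ell}$ with $\ell>0$, take $X=E_{1,k+1}+E_{k+1,1}\in(M_{k,\ell})^1$; a direct computation gives $X^2=E_{1,1}+E_{k+1,k+1}$, a nonzero idempotent, so $X^N$ alternates between $X$ and this idempotent and is never zero. With these choices, the evaluation of $\mathrm{Cap}_N$ collapses, using $e_{\sigma(1)}\cdots e_{\sigma(N)}=\mathrm{sgn}(\sigma)\,e_1\cdots e_N$ in $G$, to
$$
\sum_{\sigma\in S_N}\mathrm{sgn}(\sigma)\,e_{\sigma(1)}\cdots e_{\sigma(N)}\otimes X^N \;=\; N!\,e_1\cdots e_N\otimes X^N\;\ne\;0,
$$
which finishes both cases.

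I do not anticipate a real obstacle here: the argument is a direct computation once the element $X$ is chosen. The hypothesis $\ell>0$ is used precisely to guarantee the existence of a nonzero odd element $X$ in $M_{k,\ell}$; if $\ell=0$, the grading is trivial and $\mathcal{A}^1=0$, so the construction is unavailable. Essentially the same construction, incidentally, yields a unital embedding of the full Grassmann algebra $G$ into $G(\mathcal{A})$ via $g\mapsto g\otimes X^2$ for $g\in G^0$ and $g\mapsto g\otimes X$ for $g\in G^1$ (replacing $X^2$ by $I_n$ in the $\mathbb{FZ}_2$ case), so one could alternatively reduce the statement to the classical fact that $G$ itself satisfies no Capelli identity.
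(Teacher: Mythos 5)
Your argument is correct and complete. The paper itself gives no proof of this lemma at all --- it is stated as ``a well-known result'' with no argument attached --- so there is nothing to match it against; your explicit evaluation supplies exactly the missing content. The computation checks out: with $x_i=e_i\otimes X$ and all $y_j=1\otimes I$, each term of $\mathrm{Cap}_N$ contributes $\mathrm{sgn}(\sigma)\,e_{\sigma(1)}\cdots e_{\sigma(N)}\otimes X^N=e_1\cdots e_N\otimes X^N$, the sum is $N!\,e_1\cdots e_N\otimes X^N$, and this is nonzero because $\operatorname{char}\mathbb{F}=0$, $e_1\cdots e_N\ne0$ in the infinite-dimensional $G$, and your chosen $X$ (namely $\varepsilon I_n$, resp.\ $E_{1,k+1}+E_{k+1,1}$) has all powers nonzero; the hypothesis $\ell>0$ enters exactly where you say it does. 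The standard folklore route, which is presumably what the authors have in mind, is your closing remark: $G$ embeds into $G(\mathcal{A})$ (e.g.\ via $g\mapsto g\otimes X^{2}$ on $G^0$ and $g\mapsto g\otimes X$ on $G^1$), and $G$ satisfies no Capelli identity since its $N$ anticommuting generators give a nonzero alternating evaluation --- which is really the same computation packaged as a reduction. One small quibble: in the $M_{k,\ell}$ case this embedding sends $1$ to the idempotent $E_{11}+E_{k+1,k+1}$ rather than to the identity of $G(M_{k,\ell})$, so it is not a \emph{unital} embedding; this is harmless, since a nonzero evaluation of $\mathrm{Cap}_N$ inside any subalgebra is already a nonzero evaluation in the ambient algebra.
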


\begin{Lemma}\label{distinctalg}
Let $m,m'\in\mathbb{N}\cup\{\infty\}$. If $\mathrm{Id}(G_m(M_{k,\ell}))=\mathrm{Id}(G_{m'}(M_n(\mathbb{FZ}_2))$, then $\ell=0$ and $m'=1$.
\end{Lemma}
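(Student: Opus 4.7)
The plan is to first rule out $\ell>0$ by case analysis on whether $m$ and $m'$ are finite, and then, once $\ell=0$ is established, to pin down $m'=1$ using \Cref{exp} together with Frenkel's minimal Capelli degree formula invoked in \Cref{casemnfz2}.

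I begin by eliminating the mixed-infinity cases: if exactly one of $m,m'$ equals $\infty$, the corresponding Grassmann envelope satisfies no Capelli identity by \Cref{nocapelli}, while the other is finite-dimensional and hence satisfies some Capelli identity, a contradiction. If $m=m'=\infty$ and $\ell>0$, then \Cref{exp} requires $(k+\ell)^2=2n^2$, impossible for positive integers. So I may assume $m,m'<\infty$ with $\ell>0$; here \Cref{exp} gives $k^2+\ell^2=n^2$, hence in particular $n>k$.

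In this remaining situation \Cref{nonidentity_mkl} yields $\mathrm{Id}(M_k)^{m+1}\subseteq\mathrm{Id}(G_m(M_{k,\ell}))$. Since $M_n$ sits inside $M_n(G_{m'})\cong G_{m'}(M_n(\mathbb{FZ}_2))$ as the constant matrices, we have $\mathrm{Id}(G_{m'}(M_n(\mathbb{FZ}_2)))\subseteq\mathrm{Id}(M_n)$. If the two T-ideals in the lemma statement were equal, we would conclude $\mathrm{Id}(M_k)^{m+1}\subseteq\mathrm{Id}(M_n)$. But $\mathrm{Id}(M_n)$ is T-prime (Kemer's classification of T-prime T-ideals), so this containment iterates down to $\mathrm{Id}(M_k)\subseteq\mathrm{Id}(M_n)$, which contradicts Amitsur--Levitzki: since $n>k$, the polynomial $s_{2k}$ lies in $\mathrm{Id}(M_k)\setminus\mathrm{Id}(M_n)$. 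Hence $\ell=0$.

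With $\ell=0$ the algebra $M_{k,0}=M_k$ carries the trivial grading, so $G_m(M_k)=G_m^0\otimes M_k$ with $G_m^0$ a unital commutative $\mathbb{F}$-algebra; therefore $\mathrm{Id}(G_m(M_k))=\mathrm{Id}(M_k)$, as in the discussion preceding \Cref{g1_ordinary}. The hypothesis then reads $\mathrm{Id}(M_k)=\mathrm{Id}(G_{m'}(M_n(\mathbb{FZ}_2)))$. The case $m'=\infty$ is excluded by \Cref{nocapelli} against $\mathrm{Cap}_{k^2+1}\in\mathrm{Id}(M_k)$, so $m'<\infty$. Comparing exponents via \Cref{exp} gives $k=n$, and equating minimal Capelli degrees via Frenkel's theorem (as cited in the proof of \Cref{casemnfz2}) yields $n^2+1=n^2+\lfloor m'/2\rfloor+1$, so $\lfloor m'/2\rfloor=0$ and therefore $m'=1$.

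The main obstacle I anticipate is the invocation of T-primeness of $\mathrm{Id}(M_n)$: one must interpret $\mathrm{Id}(M_k)^{m+1}$ correctly as an iterated T-ideal product and apply T-primeness repeatedly. If preferred, this step can be replaced by a direct verification that $(s_{2k})^{m+1}$, evaluated on disjoint variable sets, is a non-identity of $M_n$ whenever $n>k$, which is a consequence of the standard fact that in a prime PI algebra the product (on disjoint variables) of two multilinear non-identities remains a non-identity.
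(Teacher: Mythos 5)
Your proof is correct and follows essentially the same route as the paper: compare exponents via \Cref{exp} and Capelli behaviour via \Cref{nocapelli} to dispose of the infinite-rank cases, reduce to $m,m'<\infty$ with $n>k$ when $\ell>0$, and extract $m'=1$ from Frenkel's minimal Capelli degree. The one point of genuine divergence is the separation step in the finite case: the paper exhibits the explicit witness $(\mathrm{Cap}_{k^2+1})^{m+1}$, which lies in $\mathrm{Id}(M_k)^{m+1}\subseteq\mathrm{Id}(G_m(M_{k,\ell}))$ but not in $\mathrm{Id}(G_{m'}(M_n(\mathbb{FZ}_2)))$, whereas you deduce $\mathrm{Id}(M_k)^{m+1}\subseteq\mathrm{Id}(M_n)$ and invoke verbal primeness of $\mathrm{Id}(M_n)$ together with Amitsur--Levitzki. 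Your route leans on a heavier tool (Kemer's classification of T-prime T-ideals), but it has the virtue of supplying a justification the paper leaves implicit, namely why the chosen product of Capelli polynomials fails to be an identity of $M_n(G_{m'})$; your closing remark about products of multilinear non-identities on disjoint variables in a prime algebra is exactly that missing detail, so either variant is acceptable. One caution on phrasing: the elimination of the mixed-infinity cases via \Cref{nocapelli} is valid only under the standing hypothesis $\ell>0$, since $G(M_{k,0})$ does satisfy Capelli identities and the configuration $m=\infty$, $\ell=0$, $m'=1$ genuinely occurs; your argument is scoped correctly because the final step treats $m\in\mathbb{N}\cup\{\infty\}$ uniformly once $\ell=0$ is known, but the wording should make the conditional nature of that elimination explicit.
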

\begin{proof}
First, assume that $m=\infty$. Then
$$
(k+\ell)^2=\mathrm{exp}(G(M_{k,\ell}))=\mathrm{exp}(G_{m'}(M_n(\mathbb{FZ}_2)))=\left\{\begin{array}{cc}n^2,&\text{ if $m'\in\mathbb{N}$},\\2n^2,&\text{ if $m'=\infty$}\end{array}\right..
$$
The equality $(k+\ell)^2=2n^2$ is impossible. Thus, $m'\in\mathbb{N}$ and $k+\ell=n$. So $\dim G_{m'}(M_n(\mathbb{FZ}_2))<\infty$, and $G_{m'}(M_n(\mathbb{FZ}_2))$ satisfies Capelli identities. However, from \Cref{nocapelli}, if $\ell>0$ then both algebras cannot satisfy the same set of polynomial identities. Hence, $\ell=0$. If $m'>1$, then \cite[Proposition 6]{Frenkel} tells us that $\mathrm{Cap}_{k^2+1}\notin\mathrm{Id}(G_{m'}(M_n(\mathbb{FZ}_2)))$, a contradiction. So, $m'=1$.

Now, assume that $m<\infty$. Since $\dim G_m(M_{k,\ell})<\infty$ and from \Cref{nocapelli}, one obtains that $m'<\infty$. From \Cref{exp}, one has
$$
k^2+\ell^2=\mathrm{exp}(G_m(M_{k,\ell}))=\mathrm{exp}(G_{m'}(M_n(\mathbb{FZ}_2)))=n^2.
$$
Thus, if $\ell>0$, then $n>k$. So $$
(\mathrm{Cap}_{k^2+1})^{m+1}\in\mathrm{Id}(G_m(M_{k,\ell}))\setminus\mathrm{Id}(G_{m'}(M_n(\mathbb{FZ}_2))),
$$
a contradiction. Hence, $\ell=0$. Now, once again from \cite[Proposition 6]{Frenkel}, we see that $m'=1$.
\end{proof}

Combining all the lemmas, we obtain the following result:
\begin{Thm}\label{superpol_assoc}
Let $\mathcal{A}$ and $\mathcal{B}$ be finite-dimensional $\mathbb{Z}_2$-graded-simple associative algebras over an algebraically closed field of characteristic zero. Let $m$, $m'\in\mathbb{N}\cup\{\infty\}$. If
$$
\mathrm{Id}(G_m(\mathcal{A}))=\mathrm{Id}(G_{m'}(\mathcal{B})),
$$
then one of the following holds valid:
\begin{enumerate}
\item $\mathcal{A}\cong_2 M_n(\mathbb{F})$, $\mathcal{B}\cong_2 M_n(\mathbb{FZ}_2)$, and $m'=1$, or
\item the same as above, where $\mathcal{A}$ and $\mathcal{B}$, and $m$ and $m'$ switch places, or
\item $\mathcal{A}\cong_2\mathcal{B}$. In addition:
\begin{enumerate}
\item if $\mathcal{A}\cong_2M_n(\mathbb{FZ}_2)$, then $\lfloor\frac{m}2\rfloor=\lfloor\frac{m'}2\rfloor$.
\item if $\mathcal{A}\cong_2M_{k,\ell}$, then $m=m'$,
\end{enumerate}
\end{enumerate}
Conversely, if any of the conditions above holds valid, then $\mathrm{Id}(G_m(\mathcal{A}))=\mathrm{Id}(G_{m'}(\mathcal{B}))$.
\end{Thm}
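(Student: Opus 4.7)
The plan is to assemble the theorem as a direct synthesis of the lemmas already proved in this subsection, combined with the classification of finite-dimensional $\mathbb{Z}_2$-graded-simple associative algebras recalled in the Preliminaries. That classification reduces the hypothesis to three essentially distinct cases (up to swapping the roles of $\mathcal{A}$ and $\mathcal{B}$, which gives conclusions (1) and (2)): either both algebras are of type $M_{k,\ell}$, or both are of type $M_n(\mathbb{FZ}_2)$, or one is of each type.

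First I would dispose of the case in which both algebras are of type $M_{k,\ell}$: this is literally the content of \Cref{casemkl} and produces conclusion (3)(b). Next, assume both are of type $M_n(\mathbb{FZ}_2)$, say $\mathcal{A}\cong_2 M_n(\mathbb{FZ}_2)$ and $\mathcal{B}\cong_2 M_{n'}(\mathbb{FZ}_2)$. Before invoking \Cref{casemnfz2} I must first match the parameter $n$ with $n'$; comparing exponents via \Cref{exp}, one of $n^2,2n^2$ must equal one of $n'^2,2n'^2$, and since $n^2=2n'^2$ is impossible for positive integers, either both envelopes are finite and $n=n'$, or both are infinite and again $n=n'$. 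Once $n=n'$ is secured, \Cref{casemnfz2} immediately yields the $\lfloor m/2\rfloor=\lfloor m'/2\rfloor$ condition of (3)(a). Finally, the mixed case $\mathcal{A}\cong_2 M_{k,\ell}$, $\mathcal{B}\cong_2 M_n(\mathbb{FZ}_2)$ is exactly the hypothesis of \Cref{distinctalg}, which forces $\ell=0$ and $m'=1$; identifying $M_{k,0}$ with $M_k(\mathbb{F})$ (setting $n=k$) we land in conclusion (1), and the symmetric situation gives (2).

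For the converse, each case follows directly from a lemma already in hand: conclusions (1) and (2) are the content of \Cref{g1_ordinary} (combined with the trivial observation that for ungraded $\mathcal{A}$ one has $\mathrm{Id}(G_m(\mathcal{A}))=\mathrm{Id}(\mathcal{A})$ for every $m$, shown in the opening paragraph of the subsection); conclusion (3)(a) is the forward implication of \Cref{casemnfz2}; and conclusion (3)(b) with $m=m'$ is tautological since the algebras $G_m(\mathcal{A})$ and $G_{m'}(\mathcal{B})$ are then graded-isomorphic.

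I do not anticipate a real obstacle: all the combinatorial work (Capelli-degree bookkeeping, exponent computation, use of Frenkel's and Giambruno--Zaicev's theorems) has already been absorbed into the lemmas. The only subtlety is the comparison of exponents when handling the same-type $M_n(\mathbb{FZ}_2)$ case with the endpoint values $m$ or $m'$ equal to $\infty$, where one must check that a finite $G_m$-envelope and an infinite $G$-envelope of $M_n(\mathbb{FZ}_2)$ cannot share polynomial identities—again a one-line exponent argument.
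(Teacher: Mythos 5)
Your proposal is correct and follows essentially the same route as the paper: case analysis via the classification of $\mathbb{Z}_2$-graded-simple algebras, with \Cref{casemkl}, \Cref{casemnfz2} and \Cref{distinctalg} handling the three cases and the converse read off from \Cref{g1_ordinary}, \Cref{casemnfz2} and the tautological case. Your extra step of first forcing $n=n'$ by comparing exponents (ruling out $n^2=2n'^2$) before invoking \Cref{casemnfz2} is a small but genuine refinement: the paper's proof tacitly assumes both algebras carry the same parameter $n$, and your one-line argument is exactly what justifies that assumption.
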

\begin{proof}
We separate in some cases. If $\mathcal{A}\cong_2 M_n(\mathbb{FZ}_2)\cong_2\mathcal{B}$, then from \Cref{casemnfz2} we obtain the case (3)(a). If $\mathcal{A}\cong_2 M_{k,\ell}$ and $\mathcal{B}\cong_2 M_{k',\ell'}$, then we apply \Cref{casemkl} to be in the situation (3)(b). If $\mathcal{A}\cong_2 M_{k,\ell}$ and $\mathcal{B}\cong_2M_n(\mathbb{FZ}_2)$, then we apply \Cref{distinctalg} to obtain the assertion (1) (or (2) if $\mathcal{A}$ and $\mathcal{B}$ switch places).

The converse follows from the following. The situations (1) and (2) are described in \Cref{g1_ordinary}; the situation (3)(b) is immediate, while the situation (3)(a) is given by \Cref{casemnfz2}.
\end{proof}

\section{$\Omega$-algebras\label{omega_alg}}

Now, we shall provide similar constructions to non-necessarily associative algebras. Indeed, we shall prove the results in the context of $\Omega$-algebras (see the definition below). We write $\Omega=\cup_{n\ge0}\Omega_n$, and we always assume that $\Omega_n\ne\emptyset$ for some $n\ge2$. For convenience, we include some basic definitions.

\subsection{Preliminaries}
Let $\Omega=\bigcup_{m=0}^\infty\Omega_m$ be a set, called \textit{signature}. An \textit{$\Omega$-algebra} is a vector space $\mathcal{A}$ such that every $\omega\in\Omega_m$ defines an $m$-ary operation on $\mathcal{A}$, that is, a linear map $\omega:\underbrace{\mathcal{A}\otimes\cdots\otimes\mathcal{A}}_\text{$m$ times}\to\mathcal{A}$.

A vector subspace $I$ of $\mathcal{A}$ is called an \textit{ideal} if for any $\omega\in\Omega_n$, $n\ge1$, all $a_1,\ldots,a_{s-1},a_{s+1},\ldots,a_n\in\mathcal{A}$ (where $1\le s\le n$) and all $b\in I$, one has
\[
\omega(a_1,\ldots,a_{s-1},b,a_{s+1},\ldots,a_n)\in I.
\]

Let $\mathcal{B}$ be another $\Omega$-algebra. A linear map $f:\mathcal{A}\to\mathcal{B}$ is called a \textit{homomorphism} of $\Omega$-algebras if
\[
f(\omega(a_1,a_2,\ldots,a_n))=\omega(f(a_1),f(a_2),\ldots,f(a_n)),
\]
for all $a_1,a_2,\ldots,a_n\in\mathcal{A}$ and all $\omega\in\Omega$, where $\omega$ is an $n$-ary operation. If, moreover, $f$ is bijective, then we call $f$ an \textit{isomorphism} of $\Omega$-algebras. In this case, we say that $\mathcal{A}$ and $\mathcal{B}$ are isomorphic $\Omega$-algebras. If $\mathcal{A}=\mathcal{B}$ and $f$ is an isomorphism, then we call $f$ an \textit{automorphism}.

Given a  non-empty set $X$, one can define the free $\Omega$-algebra $\mathbb{F}_\Omega\langle X\rangle $ as follows.  First we build the set $W=W_\Omega(X)$ of  $\Omega$-monomials in $X$ as the union of subsets $W_n$, $n=0,1,2,\ldots$ given by $W_0=\Omega_0\cup X$ and for $n>0$,
\[
W_n=\bigcup_{m=1}^\infty\bigcup_{\omega\in\Omega_m}\bigcup_{i_1+\cdots+i_m+1\le n} \omega(W_{i_1},\ldots,W_{i_m}).
\] 
From this definition, it follows that for any $\omega\in \Omega_m$ and any $a_1,\ldots,a_m\in W$ the expression $\omega(a_1,\ldots,a_m)$ is a well-defined element of $W$. The elements of $W_n$ are called \textit{monomials} of degree $n$.

Then we consider the linear span $\mathbb{F}_\Omega\langle X\rangle $ of $W=W_\Omega(X)$. If $F_n=\text{Span}\{ W_n\}$ then $\mathbb{F}_\Omega\langle X\rangle =\bigoplus_{n=0}^\infty F_n$. The elements of $F_n$ are called \textit{homogeneous polynomials of degree $n$}. In a usual way, one defines the degree of an arbitrary nonzero polynomial. By linearity, every $\omega\in\Omega_m$ defines an $m$-linear operation on $\mathbb{F}_\Omega\langle X\rangle$. Also, it follows that for any $\Omega$-algebra $\mathcal{A}$ any map $\varphi:X\to\mathcal{A}$ uniquely extends to a homomorphism $\bar{\varphi}: \mathbb{F}_\Omega\langle X\rangle \to\mathcal{A}$. The $\Omega$-algebra $\mathbb{F}_\Omega\langle X\rangle $ is called the \textit{free $\Omega$-algebra} with the \textit{basis} (set of free generators) $X$.

The equation of the form $f(x_1,\ldots,x_n)=0$ where $f(x_1,\ldots,x_n)\in \mathbb{F}_\Omega\langle X\rangle $ is called a \textit{(polynomial) identity} in an $\Omega$-algebra $\mathcal{A}$ if under any map $\varphi: X\to\mathcal{A}$ one has $\bar{\varphi}(f(x_1,\ldots,x_n))=0$. In other way, $f(a_1,\ldots,a_n)=0$, for any $a_1,\ldots, a_n\in\mathcal{A}$. The set of all polynomial identities of $\mathcal{A}$ is denoted by $\mathrm{Id}_\Omega(\mathcal{A})$.


For a polynomial $g\in F_\Omega$, we use the notation $g\mid_{x_i=a}$ to denote the value of $g$ when $x_i$ is replaced by  $a$, and we use a  similar notation for the evaluation in more than one variables. Let $g=g(x_1,\ldots,x_m)\in \mathbb{F}_\Omega\langle X\rangle $. We say that $g$ is linear in the variable $x_i$ if
\[
g\mid_{x_i=x_i+\lambda y_i}=g+\lambda g\mid_{x_i=y_i},
\]
where $\lambda\in\mathbb{F}$. A multilinear polynomial is a polynomial which is linear in each of its variables.

Now, we shall assume that there exists $m>1$ such that $\Omega_m\ne\emptyset$. Let $\mathcal{A}$ be an $\Omega$-algebra and $S_1$, $S_2\subseteq\mathcal{A}$. We define the product of $S_1$ and $S_2$ as the subspace spanned by all $f(s_1,s_2,a_1,\ldots,a_r)$, where $s_1\in S_1$, $s_2\in S_2$, $a_1$, \dots, $a_r\in\mathcal{A}$, and $f=f(x_1,x_2,y_1,\ldots,y_r)\in\mathbb{F}_\Omega\langle X\rangle$ is linear in $x_1$ and $x_2$. We denote $S^2=S\cdot S$. Note that we get the same definition if we require the polynomial $f$ to be multilinear. Then, we say that:
\begin{enumerate}
\item $\mathcal{A}$ is a \emph{simple} $\Omega$-algebra if $\mathcal{A}^2\ne0$ and $\mathcal{A}$ has no proper nontrivial ideals.
\item $\mathcal{A}$ is \emph{prime} if for any pair of nonzero ideals $I_1$, $I_2\subseteq\mathcal{A}$, then $I_1\cdot I_2\ne0$,
\item $\mathcal{A}$ is \emph{semiprime} if for any nonzero ideal $I\subseteq\mathcal{A}$, one has $I^2\ne0$.
\end{enumerate}

\subsubsection{Gradings on $\Omega$-algebras} The notion of a group grading extends naturally to an $\Omega$-algebra (see, for instance, \cite{EK2013}). We present here only the situation where the grading group is $\mathbb{Z}_2$.

A $\mathbb{Z}_2$-grading on an $\Omega$-algebra $\mathcal{A}$ is a vector space decomposition
$$
\mathcal{A}=\mathcal{A}^0\oplus\mathcal{A}^1,
$$
where for each $\omega\in\Omega_n$, one has
$$
\omega(\mathcal{A}^{i_1},\ldots,\mathcal{A}^{i_n})\subseteq\mathcal{A}^{i_1+\cdots+i_n}.
$$
A graded homomorphism (isomorphism) between two $\mathbb{Z}_2$-graded $\Omega$-algebras is a homomorphism (isomorphism) of $\Omega$-algebras which is also a graded linear map. If there exists an isomorphism between two $\Omega$-algebras, say $\mathcal{A}$ and $\mathcal{B}$, then we say that they are graded-isomorphic and denote $\mathcal{A}\cong_2\mathcal{B}$.

The free $\mathbb{Z}_2$-graded $\Omega$-algebra is the free $\Omega$-algebra, freely generated by $X^{\mathbb{Z}_2}=X^0\cup X^1$, where $X^i=\{x_1^{(i)},x_2^{(i)},\ldots\}$. The free $\Omega$-algebra $\mathbb{F}_\Omega\langle X^{\mathbb{Z}_2}\rangle$ has a natural $\mathbb{Z}_2$-grading, and it satisfies the universal property for the $\mathbb{Z}_2$-graded $\Omega$-algebras. We define a $\mathbb{Z}_2$-graded polynomial identity in the standard way. The set of all $\mathbb{Z}_2$-graded polynomial identities of $\mathcal{A}$ is denoted by $\mathrm{Id}_2(\mathcal{A})$. We shall denote $Y=X^0$ and $Z=X^1$, and denote $y_k=x_k^{(0)}$, $z_k=x_k^{(1)}$, for each $k\in\mathbb{N}$.

\subsubsection{Codimension sequence} It will be relevant for us to consider the codimension sequence of an $\Omega$-algebra. We denote by $P_{m,\Omega}$ the set of all multilinear polynomials of $\mathbb{F}_\Omega\langle X\rangle$ in the variables $x_1$, \dots, $x_m$. Note that one may have $\dim P_{m,\Omega}=\infty$. Given an $\Omega$-algebra $\mathcal{A}$, we set
$$
c_m(\mathcal{A})=\dim P_{m,\Omega}/P_{m,\Omega}\cap\mathrm{Id}_\Omega(\mathcal{A}),\quad m\in\mathbb{N}.
$$
In a similar manner as in the ordinary case, we define a variety $\mathscr{V}$ of $\Omega$-algebras as the class of all $\Omega$-algebras satisfying a given set of polynomials identities. The set of polynomials identities satisfied by all the algebras in a given class $\mathscr{V}$ is denoted by $\mathrm{Id}_\Omega(\mathscr{V})$. Then, we set
$$
c_m(\mathscr{V})=\dim P_{m,\Omega}/P_{m,\Omega}\cap\mathrm{Id}_\Omega(\mathscr{V}),\quad m\in\mathbb{N}.
$$

\subsection{Grassmann envelope\label{grassenv}}
Let $\mathcal{A}$ be an $\Omega$-algebra. The tensor product $\mathcal{A}\otimes G$ has a structure of $\Omega$-algebra, where
$$
\mu\mapsto\mu\otimes1\in\mathcal{A}\otimes G,\quad\mu\in\Omega_0,
$$
and, for any $\omega\in\Omega_n$ ($n>0$), one has
$$
\omega:(a_1\otimes g_1,\ldots,a_n\otimes g_n)\in(\mathcal{A}\otimes G)^n\mapsto\omega(a_1,\ldots,a_n)\otimes g_1\ldots g_n\in\mathcal{A}\otimes G.
$$
Now, if $\mathcal{A}=\mathcal{A}^0\oplus\mathcal{A}^1$ is a $\mathbb{Z}_2$-graded $\Omega$-algebra, then we set
$$
G(\mathcal{A})=\mathcal{A}^0\otimes G^0\oplus\mathcal{A}^1\otimes G^1\subseteq\mathcal{A}\otimes G.
$$
Thus, $G(\mathcal{A})$ becomes an $\Omega$-algebra. When $\Omega=\Omega_2=\{\cdot\}$ (that is, $\mathcal{A}$ is a binary algebra), $G(\mathcal{A})$ coincides with the usual Grassmann envelope of $\mathcal{A}$. Note that $G(\mathcal{A})$ has a natural $\mathbb{Z}_2$-grading, where $(G(\mathcal{A}))^0=\mathcal{A}^0\otimes G^0$, and $(G(\mathcal{A}))^1=\mathcal{A}^1\otimes G^1$.

With these operations, $G(\mathcal{A})$ (and $\mathcal{A}\otimes G$) becomes a $G^0$-algebra. It means that $G(\mathcal{A})$ is a $G^0$-module, and every operation on $G(\mathcal{A})$ is $G^0$-linear.

The first results are standard and analogous to the ordinary case. Recall that every word in the free algebra $\mathbb{F}_\Omega\langle X\rangle$ is uniquely determined if we suppress all the parenthesis of the operations \cite[Lemma 2.7]{JacBAII}.

\begin{Def}[$\ast$-operation]
Let $f=f(y_1,\ldots,y_r,z_1,\ldots,z_s)\in\mathbb{F}_\Omega\langle X^{\mathbb{Z}_2}\rangle$ be a multilinear polynomial, and write
$$
f=\sum_w\sum_{\sigma\in\mathcal{S}_r}\lambda_{w,\sigma} w_0z_{\sigma(1)}w_1\cdots w_{s-1} z_{\sigma(s)}w_s,
$$
where each $w_i$ can be empty, and contains variables of trivial degree and operations; and the sum varies between distinct sequences $w=(w_0,w_1,\ldots,w_s)$. We define
$$
f^\ast=\sum_w\sum_{\sigma\in\mathcal{S}_r}(-1)^\sigma\lambda_{w,\sigma}w_0z_{\sigma(1)}w_1\cdots w_{s-1}z_{\sigma(s)}w_s,
$$
\end{Def}

From a standard computation, we have:
\begin{Lemma}\label{supergrid}
Let $f=f(y_1,\ldots,y_r,z_1,\ldots,z_s)\in\mathbb{F}_\Omega\langle X^{\mathbb{Z}_2}\rangle$ be multilinear in all of its variables. Then, for any admissible substitution on $G(\mathcal{A})$, we have
$$
f(a_1\otimes h_1,\ldots, a_r\otimes h_r,b_1\otimes g_1,\ldots, b_s\otimes g_s)=f^\ast(a_1,\ldots,a_r,b_1,\ldots,b_s)\otimes g,
$$
where $g=h_1\cdots h_rg_1\cdots g_s$.\qed
\end{Lemma}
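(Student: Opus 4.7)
The plan is a direct computation that reduces to a single monomial and relies on the definition of the operations on $\mathcal{A}\otimes G$ together with the sign rules in $G$. First, using the linearity of both sides in each coefficient $\lambda_{w,\sigma}$, I would reduce to the case where $f$ is a single monomial
\[
m=w_0z_{\sigma(1)}w_1\cdots z_{\sigma(s)}w_s,
\]
with the $w_i$'s involving only the even variables $y_1,\ldots,y_r$ and operations from $\Omega$; this suffices because every $\omega\in\Omega$ is multilinear.

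Next, I would prove by induction on the number of $\Omega$-operations appearing in $m$ that every admissible substitution $y_i\mapsto a_i\otimes h_i$, $z_j\mapsto b_j\otimes g_j$ yields
\[
m(a\otimes h,b\otimes g)=m_{\mathcal{A}}(a_1,\ldots,a_r,b_1,\ldots,b_s)\otimes\tilde g,
\]
where $m_{\mathcal{A}}$ denotes the evaluation of the same monomial using the operations of $\mathcal{A}$ on the $a_i$'s and $b_j$'s, and $\tilde g\in G$ is the product of the $h_i$'s and $g_j$'s in the left-to-right order in which the corresponding variables appear in $m$. The inductive step uses the identity $\omega(a_1\otimes g_1,\ldots,a_n\otimes g_n)=\omega(a_1,\ldots,a_n)\otimes g_1\cdots g_n$ from Section~\ref{grassenv}, which forces the Grassmann parts of the subevaluations to concatenate in the correct order.

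Finally, I would normalize $\tilde g$ into the canonical form $h_1\cdots h_rg_1\cdots g_s$. Because $G^0$ is central and commutative in $G$, the $h_i$'s can be pulled to the left in any order without incurring signs, leaving a factor $h_1\cdots h_r$ multiplied by $g_{\sigma(1)}\cdots g_{\sigma(s)}$; the anticommutativity of elements of $G^1$ then yields
\[
g_{\sigma(1)}\cdots g_{\sigma(s)}=(-1)^\sigma g_1\cdots g_s,
\]
so $\tilde g=(-1)^\sigma g$. This is exactly the sign that the $\ast$-operation attaches to $m$, so summing over $(w,\sigma)$ with the original coefficients produces $f^\ast(a_1,\ldots,a_r,b_1,\ldots,b_s)\otimes g$.

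I do not foresee a genuine obstacle here: the one point that requires care is the inductive step, where one must verify that for an $n$-ary operation the Grassmann parts of the inputs really do concatenate in the left-to-right order matching the leaves of the monomial's parse tree; once this bookkeeping is in place, the rest is a routine consequence of the centrality of $G^0$ and the anticommutativity of $G^1$.
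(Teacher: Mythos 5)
Your argument is correct and is precisely the ``standard computation'' that the paper leaves unwritten: reduce to a single monomial, use the defining identity $\omega(a_1\otimes g_1,\ldots,a_n\otimes g_n)=\omega(a_1,\ldots,a_n)\otimes g_1\cdots g_n$ inductively along the parse tree to collect the Grassmann factors in leaf order, then use centrality of $G^0$ and pairwise anticommutativity of $G^1$ to produce the sign $(-1)^\sigma$ matching the definition of $f^\ast$. No gaps; the one bookkeeping point you flag (left-to-right concatenation of Grassmann parts) is exactly the point that needs checking, and your induction handles it.
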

As one of the immediate consequences, we have:
\begin{Cor}
Let $f\in\mathbb{F}_\Omega\langle X^{\mathbb{Z}_2}\rangle$ be a multilinear polynomial, and let $\mathcal{A}$ be a $Z_2$-graded $\Omega$-algebra. Then $f\in\mathrm{Id}_2(\mathcal{A})$ if and only if $f^\ast\in\mathrm{Id}_2(G(\mathcal{A}))$.\qed
\end{Cor}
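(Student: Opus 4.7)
The plan is to leverage \Cref{supergrid} together with the observation that the $\ast$-operation is an involution on multilinear polynomials, i.e.\ $(f^\ast)^\ast = f$. This involutivity is immediate from the defining formula: each term picks up the sign $(-1)^\sigma$ upon the first application of $\ast$ and the identical sign upon the second, yielding $(-1)^{2\sigma} = 1$ on every monomial. Note also that $f^\ast$ is a signed sum of the same multilinear monomials as $f$, so $f^\ast$ is itself multilinear and \Cref{supergrid} is applicable to it. Applying \Cref{supergrid} with $f^\ast$ in place of $f$, I obtain
$$
f^\ast(a_1\otimes h_1,\ldots,a_r\otimes h_r,b_1\otimes g_1,\ldots,b_s\otimes g_s) = f(a_1,\ldots,a_r,b_1,\ldots,b_s)\otimes g,
$$
where $g = h_1\cdots h_r g_1\cdots g_s$. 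Both implications of the corollary will now fall out of this single identity.

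For the forward direction, I will assume $f\in\mathrm{Id}_2(\mathcal{A})$. Then the right-hand side of the displayed equation vanishes for every admissible choice $a_i\in\mathcal{A}^0$, $b_j\in\mathcal{A}^1$, so $f^\ast$ vanishes on all inputs of the form $(a_i\otimes h_i, b_j\otimes g_j)$ with $h_i\in G^0$ and $g_j\in G^1$. Since $(G(\mathcal{A}))^0 = \mathcal{A}^0\otimes G^0$ and $(G(\mathcal{A}))^1 = \mathcal{A}^1\otimes G^1$ are spanned by such elementary tensors, and $f^\ast$ is multilinear, this extends by linearity to every admissible substitution in $G(\mathcal{A})$, yielding $f^\ast\in\mathrm{Id}_2(G(\mathcal{A}))$.

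For the converse, assume $f^\ast\in\mathrm{Id}_2(G(\mathcal{A}))$ and fix an arbitrary admissible substitution $(a_1,\ldots,a_r,b_1,\ldots,b_s)$ in $\mathcal{A}$. Choosing $h_i = 1 \in G^0$ and $g_j = e_j\in G^1$, the element $g = e_1 e_2\cdots e_s$ is nonzero in $G$, so the displayed identity gives $f(a_1,\ldots,b_s)\otimes g = 0$ in $\mathcal{A}\otimes G$; since $g\ne 0$, this forces $f(a_1,\ldots,b_s) = 0$, whence $f\in\mathrm{Id}_2(\mathcal{A})$. The argument is purely formal once one has the involutivity of $\ast$ and \Cref{supergrid}; there is no genuine obstacle, just the two mild bookkeeping points of checking that $\ast$ is involutive and that one can select Grassmann data making $g$ nonzero.
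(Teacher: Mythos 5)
Your proof is correct and is exactly the argument the paper intends: the corollary is stated as an immediate consequence of \Cref{supergrid}, and your unpacking via the involutivity $f^{\ast\ast}=f$ (which the paper itself invokes later), plus multilinearity for the forward direction and a choice of Grassmann generators with $e_1\cdots e_s\ne0$ for the converse, is the standard derivation. Nothing is missing.
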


In particular, we have the following consequence. Let $\mathcal{A}$ and $\mathcal{B}$ be $\mathbb{Z}_2$-graded $\Omega$-algebras. If $\mathrm{char}\,\mathbb{F}=0$, then $\mathrm{Id}_2(\mathcal{A})=\mathrm{Id}_2(\mathcal{B})$ if and only if $\mathrm{Id}_2(G(\mathcal{A}))=\mathrm{Id}_2(G(\mathcal{B}))$. If $\mathcal{A}$ and $\mathcal{B}$ are prime and finite-dimensional over $\mathbb{F}$, and $\mathbb{F}$ is algebraically closed, then any of the former equalities implies $\mathcal{A}\cong_2\mathcal{B}$ (\cite{BY19a}). If two $\mathbb{Z}_2$-graded algebras satisfy the same set of superpolynomial identities, i.e., $\mathrm{Id}(G(\mathcal{A}))=\mathrm{Id}(G(\mathcal{B}))$, then it does not readily imply that the set of $\mathbb{Z}_2$-graded polynomial identities are the same. Thus, it is not trivial to conclude that $\mathcal{A}\cong_2\mathcal{B}$.

Using the $\ast$-operation, we can define supervarieties of $\Omega$-algebras:
\begin{Def}
Let $\mathscr{V}$ be a variety of $\mathbb{Z}_2$-graded $\Omega$-algebras. Then, the variety of $\mathscr{V}$-super $\Omega$-algebras is the class $\mathscr{V}^\ast$ of all $\mathbb{Z}_2$-graded $\Omega$-algebras $\mathcal{A}$ such that $G(\mathcal{A})\in\mathscr{V}$.
\end{Def}
Another consequence of \Cref{supergrid} is as follows. A $\mathscr{V}$-supervariety is a variety of $\mathbb{Z}_2$-graded $\Omega$-algebras. Moreover:
\begin{Cor}
Let $\{f_i\mid i\in\mathscr{I}\}$ be a set of multilinear polynomials in $\mathbb{F}_\Omega\langle X^{\mathbb{Z}_2}\rangle$ defining a variety $\mathscr{V}$ of $\mathbb{Z}_2$-graded $\Omega$-algebras. Then, $\mathscr{V}^\ast$ is a variety of $\mathbb{Z}_2$-graded $\Omega$-algebras defined by $\{f_i^\ast\mid i\in\mathscr{I}\}$.\qed
\end{Cor}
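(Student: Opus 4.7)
The plan is to reduce the corollary directly to the preceding corollary (the $f\in\mathrm{Id}_2(\mathcal{A})\iff f^\ast\in\mathrm{Id}_2(G(\mathcal{A}))$ statement) together with the observation that the $\ast$-operation is an involution on multilinear polynomials. Spelling out what $\mathscr{V}^\ast$ means, a $\mathbb{Z}_2$-graded $\Omega$-algebra $\mathcal{A}$ belongs to $\mathscr{V}^\ast$ precisely when $G(\mathcal{A})\in\mathscr{V}$, which in turn is equivalent to $f_i\in\mathrm{Id}_2(G(\mathcal{A}))$ for every $i\in\mathscr{I}$. So the task is to rewrite the condition ``$f_i\in\mathrm{Id}_2(G(\mathcal{A}))$'' as ``$f_i^\ast\in\mathrm{Id}_2(\mathcal{A})$''.

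First I would verify the key auxiliary fact that $f^{\ast\ast}=f$ for every multilinear $f\in\mathbb{F}_\Omega\langle X^{\mathbb{Z}_2}\rangle$. This is immediate from the definition: the $\ast$-operation only multiplies the coefficient $\lambda_{w,\sigma}$ of each monomial by $(-1)^\sigma$ (where $\sigma$ permutes the odd variables $z_1,\ldots,z_s$), so iterating produces $(-1)^{2\sigma}=1$ and recovers the original polynomial. This involutive property is what makes it legitimate to ``swap sides'' in the equivalence of the previous corollary.

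Next I would apply the previous corollary to the polynomial $f_i^\ast$ and to the $\mathbb{Z}_2$-graded $\Omega$-algebra $\mathcal{A}$, obtaining
\[
f_i^\ast\in\mathrm{Id}_2(\mathcal{A})\iff (f_i^\ast)^\ast\in\mathrm{Id}_2(G(\mathcal{A})),
\]
and then use the involution $(f_i^\ast)^\ast=f_i$ to rewrite the right-hand side as $f_i\in\mathrm{Id}_2(G(\mathcal{A}))$. Quantifying over all $i\in\mathscr{I}$ yields the chain of equivalences $\mathcal{A}\in\mathscr{V}^\ast\iff G(\mathcal{A})\text{ satisfies all }f_i\iff \mathcal{A}\text{ satisfies all }f_i^\ast$, which is exactly the claim that $\mathscr{V}^\ast$ is the variety cut out by $\{f_i^\ast\mid i\in\mathscr{I}\}$.

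There is no real obstacle here; the statement is essentially a repackaging of the previous corollary via the involutivity of $\ast$. The only point that warrants care is making sure that the defining identities $f_i$ can indeed be assumed multilinear (so that the preceding corollary applies), which is given in the hypothesis, and that the $\ast$-operation is well-defined on each $f_i$, which is also part of the setup.
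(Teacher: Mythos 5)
Your argument is correct and matches the paper's intended justification: the corollary is stated with no written proof precisely because it follows immediately from the preceding corollary ($f\in\mathrm{Id}_2(\mathcal{A})\iff f^\ast\in\mathrm{Id}_2(G(\mathcal{A}))$) together with the involutivity $f^{\ast\ast}=f$, which is exactly the reduction you carry out. Your care in checking that $\ast$ is an involution and that the defining identities are multilinear is appropriate and consistent with the paper's setup.
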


Now, we have seen that $\mathcal{A}$ satisfies a $\mathbb{Z}_2$-graded PI if and only if $G(\mathcal{A})$ satisfies as well. Next, we investigate the same question in the context of ordinary $\Omega$-polynomial identities. It is clear that if $G(\mathcal{A})$ satisfies a PI, then $\mathcal{A}$ satisfies the same PI (since $\mathcal{A}\subseteq G(\mathcal{A})$). Since we assume that $\mathrm{char}\,\mathbb{F}=0$, we need only to deal with multilinear polynomial identities. As before, we denote by $G_m$ the finite-dimensional Grassmann algebra generated by $m$ elements, and $G_m(\mathcal{A})$ denotes the respective Grassmann envelope.

\begin{Lemma}
Let $m\in\mathbb{N}$ and $f=f(x_1,\ldots,x_m)\in\mathbb{F}_\Omega\langle X\rangle$ be multilinear. Then, $f\in\mathrm{Id}_\Omega(G(\mathcal{A}))$ if and only if $f\in\mathrm{Id}_\Omega(G_m(\mathcal{A}))$. Moreover, $f\in\mathrm{Id}_\Omega(\mathcal{A}\otimes G)$ if and only if $f\in\mathrm{Id}_\Omega(\mathcal{A}\otimes G_m)$.
\end{Lemma}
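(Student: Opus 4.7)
The inclusions $G_m(\mathcal{A})\subseteq G(\mathcal{A})$ and $\mathcal{A}\otimes G_m\subseteq\mathcal{A}\otimes G$ give the forward implications immediately, so the entire content is in the converses. My plan is to reduce, by multilinearity, any nonzero evaluation of $f$ on $G(\mathcal{A})$ to an evaluation on pure tensors of fixed parity, then to apply \Cref{supergrid} to factor out the Grassmann part, and finally to replace the Grassmann factors with witnesses living inside $G_m$.

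Concretely, suppose $f\notin\mathrm{Id}_\Omega(G(\mathcal{A}))$, so there exist $u_1,\ldots,u_m\in G(\mathcal{A})$ with $f(u_1,\ldots,u_m)\ne 0$. Since $G(\mathcal{A})=\mathcal{A}^0\otimes G^0\oplus\mathcal{A}^1\otimes G^1$, I write $u_i=u_i^0+u_i^1$ with $u_i^{\epsilon}\in\mathcal{A}^{\epsilon}\otimes G^{\epsilon}$; multilinearity of $f$ then expands $f(u_1,\ldots,u_m)$ as a sum over parity vectors $\vec\epsilon\in\{0,1\}^m$ of $f(u_1^{\epsilon_1},\ldots,u_m^{\epsilon_m})$, so at least one such term is nonzero. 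Splitting further into pure tensors, I obtain homogeneous $a_i\in\mathcal{A}^{\epsilon_i}$ and Grassmann monomials $g_i=e_{j_1^{(i)}}\cdots e_{j_{k_i}^{(i)}}\in G^{\epsilon_i}$ with $f(a_1\otimes g_1,\ldots,a_m\otimes g_m)\ne 0$.

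Viewing $f$ as a $\mathbb{Z}_2$-graded multilinear polynomial in which the variable $x_i$ is assigned the parity $\epsilon_i$, \Cref{supergrid} applies and yields
$$
f(a_1\otimes g_1,\ldots,a_m\otimes g_m)=f^\ast_{\vec\epsilon}(a_1,\ldots,a_m)\otimes h,
$$
where $h$ is the canonically ordered product of the $g_i$ and $f^\ast_{\vec\epsilon}$ is obtained from $f$ by the $\ast$-construction for the parity assignment $\vec\epsilon$. Nonvanishing of the left side forces $f^\ast_{\vec\epsilon}(a_1,\ldots,a_m)\ne 0$ in $\mathcal{A}$. To produce a nonzero evaluation inside $G_m(\mathcal{A})$, I replace each $g_i$ by $g_i'\in G_m^{\epsilon_i}$ chosen as $g_i'=1$ if $\epsilon_i=0$ and $g_i'=e_i$ if $\epsilon_i=1$; then $g_1'\cdots g_m'=\prod_{i:\epsilon_i=1}e_i\ne 0$ in $G_m$ since at most $m$ distinct generators appear. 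Applying \Cref{supergrid} once more,
$$
f(a_1\otimes g_1',\ldots,a_m\otimes g_m')=f^\ast_{\vec\epsilon}(a_1,\ldots,a_m)\otimes(g_1'\cdots g_m')\ne 0
$$
in $G_m(\mathcal{A})$, contradicting $f\in\mathrm{Id}_\Omega(G_m(\mathcal{A}))$.

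For the second statement, the same strategy works for $\mathcal{A}\otimes G$ in place of $G(\mathcal{A})$, now without parity constraints on the $\mathcal{A}$-factor: by multilinearity I still reduce to substitutions $a_i\otimes g_i$ with $g_i$ a Grassmann monomial of some parity $\epsilon_i$, and the sign-collection argument behind \Cref{supergrid} proceeds identically. The main technical point to verify carefully is the legitimacy of invoking \Cref{supergrid} for an ordinary polynomial $f$ after freezing a parity assignment $\vec\epsilon$ on its variables; this amounts to observing that the $\ast$-operation depends only on the parities of the variables and the order in which they appear in each monomial, which is well defined whether $f$ is originally graded or not. Once this is in place, the replacement $g_i\rightsquigarrow g_i'$ using at most $m$ generators of $G$ completes both equivalences.
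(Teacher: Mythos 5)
Your proof is correct, but it takes a genuinely different (and slightly heavier) route than the paper's. You argue by contrapositive: starting from a nonzero evaluation on pure tensors $a_i\otimes g_i$, you invoke \Cref{supergrid} --- extended to an ordinary multilinear $f$ with a frozen parity assignment $\vec\epsilon$, a point you rightly flag and which does hold, since the $\ast$-construction only uses the parities and the order of occurrence of the variables in each monomial --- to split off $f^\ast_{\vec\epsilon}(a_1,\ldots,a_m)\ne0$, and then re-tensor with the short monomials $1$ or $e_i$ to produce a nonzero evaluation inside $G_m(\mathcal{A})$. The paper instead argues directly: it writes each Grassmann monomial as $g_i=h_id_i$ with $h_i\in G^0$ and $d_i\in\{1,e_{\ell_i}\}$, uses the $G^0$-linearity of all operations on $G(\mathcal{A})$ (noted in \Cref{grassenv}) to pull the even factor $h=h_1\cdots h_m$ out of the evaluation, and observes that $a_1\otimes d_1,\ldots,a_m\otimes d_m$ involve at most $m$ generators and hence, after renaming generators, lie in $G_m(\mathcal{A})$, so the evaluation vanishes. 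Both arguments rest on the same underlying observation --- only the parities of the Grassmann factors matter, so $m$ generators suffice for an $m$-linear polynomial --- but the paper's version needs neither the $\ast$-operation nor its extension to ungraded polynomials, while yours has the small bonus of exhibiting the evaluation explicitly in the separated form $f^\ast_{\vec\epsilon}(a_1,\ldots,a_m)\otimes(g_1'\cdots g_m')$. Your treatment of the second equivalence (for $\mathcal{A}\otimes G$ versus $\mathcal{A}\otimes G_m$) likewise matches the paper's one-line remark that the same argument applies.
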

\begin{proof}
Since $G_m(\mathcal{A})\subseteq G(\mathcal{A})$, it is clear that $\mathrm{Id}_\Omega(G(\mathcal{A}))\subseteq\mathrm{Id}_\Omega(G_m(\mathcal{A}))$. Now, assume that $f\in\mathrm{Id}_\Omega(G_m(\mathcal{A}))$ is multilinear. Since $f$ is multilinear, it is enough to check that $f$ annihilates a set of generators of $G(\mathcal{A})$. So, let $a_1\otimes g_1$, \dots, $a_m\otimes g_m\in G(\mathcal{A})$. Write $g_i=h_id_i$, where $h_i\in G^0$ and either $d_i=1$ or $d_i=e_{\ell_i}$. Then,
$$
f(a_1\otimes g_1,\ldots,a_m\otimes g_m)=f(a_1\otimes d_1,\ldots,a_m\otimes d_m)h,
$$
where $h=h_1\cdots h_m\in G_m$. Up to renaming generators, one has $a_1\otimes d_1$, \dots, $a_m\otimes d_m\in G_m(\mathcal{A})$. Thus, $f(a_1\otimes d_1,\ldots,a_m\otimes d_m)=0$, and $f\in\mathrm{Id}_\Omega(G(\mathcal{A}))$. A similar argument holds valid for $\mathcal{A}\otimes G_m$ and $\mathcal{A}\otimes G$.
\end{proof}

\begin{Thm}
Let $\mathcal{A}$ be a $\mathbb{Z}_2$-graded $\Omega$-algebra such that $c_m(\mathcal{A})<\infty$, for all $m\in\mathbb{N}$. Then
$$
c_m(\mathcal{A})\le c_m(G(\mathcal{A}))\le 2^{m}c_m(\mathcal{A}).
$$
\end{Thm}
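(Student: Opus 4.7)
My plan is to compare ordinary identities of $\mathcal{A}$ and of $G(\mathcal{A})$ by refining each multilinear ordinary polynomial into graded pieces indexed by subsets of $\{1,\ldots,m\}$ and then invoking the $\ast$-involution of \Cref{supergrid}. For a multilinear $f \in P_{m,\Omega}$ and $S \subseteq \{1,\ldots,m\}$, let $f^{(S)}$ denote the graded multilinear polynomial obtained from $f$ by relabeling $x_i$ as the odd variable $z_i$ for $i \in S$ and as the even variable $y_i$ otherwise. Decomposing each evaluation of $f$ on $\mathcal{A}$ (resp.~$G(\mathcal{A})$) as a sum of graded evaluations, multilinearity yields the equivalences $f \in \mathrm{Id}_\Omega(\mathcal{A})$ iff $f^{(S)} \in \mathrm{Id}_2(\mathcal{A})$ for every $S$, and $f \in \mathrm{Id}_\Omega(G(\mathcal{A}))$ iff $f^{(S)} \in \mathrm{Id}_2(G(\mathcal{A}))$ for every $S$; the latter, via the corollary to \Cref{supergrid}, is equivalent to $(f^{(S)})^\ast \in \mathrm{Id}_2(\mathcal{A})$ for every $S$.

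For the upper bound $c_m(G(\mathcal{A})) \le 2^m c_m(\mathcal{A})$, the map $f \mapsto (f^{(S)})_S$ induces a linear injection
\[
P_{m,\Omega}/(P_{m,\Omega} \cap \mathrm{Id}_\Omega(G(\mathcal{A}))) \hookrightarrow \bigoplus_{S} P^{(S)}/(P^{(S)} \cap \mathrm{Id}_2(G(\mathcal{A}))),
\]
where $P^{(S)}$ denotes the space of multilinear graded polynomials labeled according to $S$. The $\ast$-involution gives a linear isomorphism of each summand with $P^{(S)}/(P^{(S)} \cap \mathrm{Id}_2(\mathcal{A}))$. Since the forgetful relabeling $P_{m,\Omega} \xrightarrow{\sim} P^{(S)}$ sends $P_{m,\Omega} \cap \mathrm{Id}_\Omega(\mathcal{A})$ into $P^{(S)} \cap \mathrm{Id}_2(\mathcal{A})$, each summand has dimension at most $c_m(\mathcal{A})$, so summing over the $2^m$ subsets delivers the bound.

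For the lower bound $c_m(\mathcal{A}) \le c_m(G(\mathcal{A}))$, the goal is to establish $\mathrm{Id}_\Omega(G(\mathcal{A})) \cap P_{m,\Omega} \subseteq \mathrm{Id}_\Omega(\mathcal{A}) \cap P_{m,\Omega}$. The natural approach is evaluation-based: given $a_1,\ldots,a_m \in \mathcal{A}$ with $a_i = a_i^0 + a_i^1$, lift to $\tilde{a}_i = a_i^0 \otimes 1 + a_i^1 \otimes e_i \in G(\mathcal{A})$ using distinct odd Grassmann generators $e_i$, and expand via \Cref{supergrid} to
\[
f(\tilde{a}_1,\ldots,\tilde{a}_m) = \sum_{S \subseteq \{1,\ldots,m\}} (f^{(S)})^\ast\bigl(a_1^{(\varepsilon_1)},\ldots,a_m^{(\varepsilon_m)}\bigr) \otimes e_S,
\]
where $\varepsilon_i = 1$ if $i \in S$ and $0$ otherwise, and $e_S = \prod_{i \in S} e_i$ are linearly independent in $G$. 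If $f \in \mathrm{Id}_\Omega(G(\mathcal{A}))$, every coefficient $(f^{(S)})^\ast$ vanishes on the corresponding graded evaluations of $\mathcal{A}$.

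The main obstacle is to transfer this vanishing from $(f^{(S)})^\ast$ back to the un-starred polynomials $f^{(S)}$, which is what one needs to conclude $f(a_1,\ldots,a_m) = \sum_S f^{(S)}(a_1^{(\varepsilon_1)},\ldots,a_m^{(\varepsilon_m)}) = 0$ in $\mathcal{A}$. The $\ast$-operation is a sign twist determined by the permutation of odd-variable positions, so vanishing of every $(f^{(S)})^\ast$ does not formally yield vanishing of every $f^{(S)}$. To bridge this sign gap I would refine the lift by varying the assignment of Grassmann generators (or by averaging over permutations of those assignments) so that both the starred and un-starred evaluations appear among the resulting coefficients in $\mathcal{A}\otimes G$; an application of characteristic-zero representation theory on the multilinear spaces should then reconcile the sign contributions and close the lower bound.
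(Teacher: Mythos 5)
Your handling of the upper bound is correct and is in substance the paper's own argument: both decompose a multilinear $f$ into its $2^m$ graded relabelings $f^{(S)}$ and use \Cref{supergrid} (i.e., that $\ast$ is a linear involution of each multilinear graded component interchanging $\mathrm{Id}_2(\mathcal{A})$ and $\mathrm{Id}_2(G(\mathcal{A}))$) to bound each of the $2^m$ summands by $c_m(\mathcal{A})$; the paper packages the same count as an injection into $P_{m,\Omega}(\mathcal{A})\otimes G_m$ with $\dim G_m=2^m$. Your bookkeeping with the spaces $P^{(S)}$ is, if anything, more careful than the printed proof.

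The lower bound is a genuine gap, and it is not one that averaging over assignments of Grassmann generators or representation theory will close. What your computation actually yields is that $f\in\mathrm{Id}_\Omega(G(\mathcal{A}))$ forces $(f^{(S)})^\ast\in\mathrm{Id}_2(\mathcal{A})$ for all $S$, whereas $f\in\mathrm{Id}_\Omega(\mathcal{A})$ requires the un-starred $f^{(S)}\in\mathrm{Id}_2(\mathcal{A})$; these conditions genuinely differ, and no sign reconciliation can identify them, because the implication you are after is false. Take $\Omega=\Omega_2=\{\cdot\}$ and $\mathcal{A}=G$ with its parity grading: for $f=[x_1,x_2]$ and $S=\{1,2\}$ one has $(f^{(S)})^\ast=z_1z_2+z_2z_1\in\mathrm{Id}_2(G)$ while $f^{(S)}=z_1z_2-z_2z_1\notin\mathrm{Id}_2(G)$; globally, $G(G)=G^0\otimes G^0\oplus G^1\otimes G^1$ is commutative (the two sign changes cancel), so $c_2(G(G))=1<2=c_2(G)$ even though $c_m(G)=2^{m-1}<\infty$. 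This is consistent with the paper's own later remark that $\mathrm{Id}_\Omega(G(G(\mathcal{B})))=\mathrm{Id}_\Omega(\mathcal{B})$, applied to $\mathcal{B}=\mathbb{F}\mathbb{Z}_2$. The paper's justification of the left-hand inequality is the assertion, made just before the theorem, that $\mathcal{A}\subseteq G(\mathcal{A})$; but only $\mathcal{A}^0\otimes 1$ embeds, since any candidate image of an odd element must involve an odd Grassmann factor and $(a\otimes e_i)(b\otimes e_i)=ab\otimes e_i^2=0$ does not reproduce the product in $\mathcal{A}$. So the half of the statement you could not prove is the half that fails; rather than trying to repair the argument, you should record that your method proves $c_m(G(\mathcal{A}))\le 2^m c_m(\mathcal{A})$ and that the reverse inequality $c_m(\mathcal{A})\le c_m(G(\mathcal{A}))$ requires an additional hypothesis (it does hold, for instance, when $\mathcal{A}^1=0$).
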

\begin{proof}
Let $m\in\mathbb{N}$. From the previous lemma, it is enough to compute $P_{m,\Omega}(G_m(\mathcal{A}))$. Every multilinear polynomial $f\in P_{m,\Omega}(G_m(\mathcal{A}))$ can be viewed as a multilinear map
$$
(a_1,\ldots,a_m,g_1,\ldots,g_m)\in\mathcal{A}^m\times G_m^m\mapsto f(a_1\otimes g_1,\ldots,a_m\otimes g_m)\in\mathcal{A}\otimes G_m.
$$
This identification is clearly injective. For, the existence of a multilinear polynomial having 0 as its image implies that it is a polynomial identity for $G_m(\mathcal{A})$. Now, from \Cref{supergrid}, the former equation equals $f^\ast(a_1,\ldots,a_m)\otimes g_1\cdots g_m$. Thus, $\dim P_{m,\Omega}(G_m(\mathcal{A}))$ is bounded by $\dim(P_{m,\Omega}(\mathcal{A})\otimes G_m)$. The first one has dimension $c_m(\mathcal{A})$, while the second has dimension $2^{m}$. Hence,
$$
c_m(G(\mathcal{A}))=c_m(G_m(\mathcal{A}))\le 2^{m}c_m(\mathcal{A}),
$$
for all $m\in\mathbb{N}$.
\end{proof}

As a consequence, we have the following:
\begin{Cor}
Let $\mathcal{A}$ be a $\mathbb{Z}_2$-graded $\Omega$-algebras, and $\mathscr{V}$ be a variety of $\Omega$-algebras such that $\dim P_{m,\Omega}(\mathscr{V})$ asymptotically grows strictly faster than $2^{m}c_m(\mathscr{W})$ for any proper subvariety $\mathscr{W}\subset\mathscr{V}$, and assume that $\mathcal{A}$, $G(\mathcal{A})\in\mathscr{V}$. Denote by $\mathbb{F}(\mathscr{V})$ its relatively free algebra. Let $\mathcal{A}$ be a $\mathbb{Z}_2$-graded $\Omega$-algebra. Then $\mathcal{A}$ satisfies a PI from $\mathbb{F}(\mathscr{V})$ if and only if $G(\mathcal{A})$ satisfies as well.
\end{Cor}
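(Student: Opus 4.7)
The plan is to translate the hypothesis into the language of varieties and reduce the statement to the codimension bound $c_m(\mathcal{A})\le c_m(G(\mathcal{A}))\le 2^m c_m(\mathcal{A})$ just proved. To say that $\mathcal{A}$ satisfies a PI from $\mathbb{F}(\mathscr{V})$ means that the subvariety $\mathscr{W}_{\mathcal{A}}\subseteq\mathscr{V}$ generated by $\mathcal{A}$ is proper, equivalently $\mathrm{Id}_\Omega(\mathcal{A})\supsetneq\mathrm{Id}_\Omega(\mathscr{V})$; analogously for $G(\mathcal{A})$. Since $\mathrm{char}\,\mathbb{F}=0$, each T-ideal is recovered from its multilinear components, so this strict inclusion holds if and only if $c_m(\mathcal{A})<c_m(\mathscr{V})=\dim P_{m,\Omega}(\mathscr{V})$ for some $m$.

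For the direction ``$\mathcal{A}$ has a PI from $\mathbb{F}(\mathscr{V})$ implies $G(\mathcal{A})$ has one'', I would apply the growth hypothesis to the proper subvariety $\mathscr{W}_{\mathcal{A}}$: since $c_m(\mathcal{A})=c_m(\mathscr{W}_{\mathcal{A}})$, the ratio $c_m(\mathscr{V})/\bigl(2^m c_m(\mathcal{A})\bigr)$ tends to infinity. Combined with the upper bound $c_m(G(\mathcal{A}))\le 2^m c_m(\mathcal{A})$ from the previous theorem, this forces $c_m(G(\mathcal{A}))<c_m(\mathscr{V})$ for all sufficiently large $m$, whence $\mathrm{Id}_\Omega(G(\mathcal{A}))\supsetneq\mathrm{Id}_\Omega(\mathscr{V})$.

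For the converse, I would argue by contraposition: if $\mathcal{A}$ generates $\mathscr{V}$ then $c_m(\mathcal{A})=c_m(\mathscr{V})$ for every $m$, and the lower bound $c_m(G(\mathcal{A}))\ge c_m(\mathcal{A})$ together with $G(\mathcal{A})\in\mathscr{V}$ forces $c_m(G(\mathcal{A}))=c_m(\mathscr{V})$ for every $m$. Consequently $G(\mathcal{A})$ also generates $\mathscr{V}$, and hence satisfies no PI from $\mathbb{F}(\mathscr{V})$. The argument is essentially bookkeeping, and the only delicate point is making sure the growth hypothesis is applied to the correct proper subvariety, so that the $2^m$ factor gets absorbed by the strict asymptotic domination of $c_m(\mathscr{V})$ over $c_m(\mathscr{W})$; the rest is a direct application of the codimension sandwich.
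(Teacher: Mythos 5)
Your argument is correct and essentially identical to the paper's: the key step in both is to apply the growth hypothesis to the proper subvariety generated by $\mathcal{A}$, obtaining $c_m(G(\mathcal{A}))\le 2^m c_m(\mathcal{A})<c_m(\mathscr{V})$ from the codimension theorem. The only cosmetic difference is in the easy direction, where the paper cites the containment $\mathcal{A}\subseteq G(\mathcal{A})$ directly while you run the equivalent contrapositive through the lower codimension bound $c_m(\mathcal{A})\le c_m(G(\mathcal{A}))$.
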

\begin{proof}
Since $\mathcal{A}\subseteq G(\mathcal{A})$, every polynomial identity satisfied by $G(\mathcal{A})$ is satisfied by $\mathcal{A}$ as well. Conversely, assume that $\mathcal{A}$ satisfies a polynomial identity in $\mathbb{F}(\mathscr{V})$. It means that $2^{m}c_m(\mathcal{A})<c_m(\mathscr{V})$, for some $m\in\mathbb{N}$. From the previous theorem, one has
$$
c_m(G(\mathcal{A}))\le2^{m}c_m(\mathcal{A})<c_m(\mathscr{V}).
$$
Thus, $G(\mathcal{A})$ satisfies a polynomial identity in $\mathbb{F}(\mathscr{V})$.
\end{proof}

The former result is slightly stronger than the statement ``$\mathcal{A}$ is a PI-algebra if and only if $G(\mathcal{A})$ is a PI-algebra". Consider, for instance, the associative case. Then $\mathcal{A}$ is associative if and only if $G(\mathcal{A})$ is associative. Thus, both algebras satisfy the polynomial identity given by the associator $(xy)z-x(yz)$. However, it is known that we have a stronger result: if $\mathcal{A}$ satisfies a polynomial identity as an associative algebra, then $G(\mathcal{A})$ also satisfies a polynomial identity from the free associative algebra. The associative context is a particular case of the statement of the previous corollary. Indeed, let $\mathscr{V}$ be the variety of all associative algebras. The variety $\mathscr{V}$ satisfies $c_m(\mathscr{V})=m!$, and Regev's result states that every proper subvariety of $\mathscr{V}$ is exponentially bounded (see \cite{Regev}). Thus, $2^mc_m(\mathscr{W})<c_m(\mathscr{V})$, for convenient values of $m$ and any proper subvariety $\mathscr{W}\subset\mathscr{V}$. Hence, $\mathscr{V}$ satisfies the hypothesis of the previous result.

\begin{Remark}
As a final remark, we have that $\mathrm{Id}_\Omega(\mathcal{A})=\mathrm{Id}_\Omega(G(G(\mathcal{A})))$. Indeed, we shall prove that both $\Omega$-algebras satisfy the same set of $\mathbb{Z}_2$-graded multilinear polynomial identities. Let $f=f(y_1,\ldots,y_r,z_1,\ldots,z_r)\in\mathbb{F}_\Omega\langle X^{\mathbb{Z}_2}\rangle$ such that $f\in\mathrm{Id}_2(\mathcal{A})$. It is clear that $f^{\ast\ast}=f$. Then, for any homogeneous $(a_i\otimes g_i)\otimes h_i$, $(a_j'\otimes g_j')\otimes h_j'\in G(\mathcal{A})$, we have, from \Cref{supergrid},
\begin{align*}
f((a_1\otimes g_1)\otimes h_1,\ldots,(a_s'\otimes g_s')\otimes h_s')&=f^\ast(a_1\otimes g_1,\ldots,a_s'\otimes g_s')\otimes h\\&=f(a_1,\ldots,a_s')\otimes g\otimes h=0,
\end{align*}
where $g=g_1\ldots g_r g_1'\cdots g_s'$ and $h=h_1\cdots h_r h_1'\cdots h_s'$. Thus, $f\in\mathrm{Id}_2(G(G(\mathcal{A})))$. Since both algebras satisfy the same set of multilinear $\mathbb{Z}_2$-graded polynomial identities, then they satisfy the same set of $\mathbb{Z}_2$-graded polynomial identities ($\mathrm{char}\,\mathbb{F}=0$). Moreover, as a consequence, one has $\mathrm{Id}_\Omega(\mathcal{A})=\mathrm{Id}_\Omega(G(G(\mathcal{A})))$.
\end{Remark}

\subsection{Polynomial identities of the Grassmann algebra}
In this subsection, we shall compute the $\Omega$-polynomial identities of the Grassmann algebra. The result obtained here will be important to obtain a generalization of \Cref{casemnfz2} in the context of $\Omega$-algebras. The natural structure of $\Omega$-algebra on $G$ (and on $G_m$) is as follows. First, for any $\mu\in\Omega_0$, we shall associate
$$
\mu\mapsto1\in G.
$$
If $\omega\in\Omega_n$, then, we set
$$
\omega(g_1,\ldots,g_n)=g_1\cdots g_n,\quad g_1,\ldots,g_n\in G.
$$
Before we proceed, it is worth mentioning the following. If $\mathcal{A}$ and $\mathcal{B}$ are $\Omega$-algebras, then $\mathcal{A}\otimes\mathcal{B}$ is an $\Omega$-algebra as well, via the map satisfying:
$$
\omega:(a_1\otimes b_1,\ldots, a_n\otimes b_n)\in(\mathcal{A}\otimes\mathcal{B})^n\mapsto\omega(a_1,\ldots,a_n)\otimes\omega(b_1,\ldots,b_n)\in\mathcal{A}\otimes\mathcal{B}.
$$
Then, given any $\mathbb{Z}_2$-graded $\Omega$-algebra $\mathcal{A}$, the above defined $\Omega$-operations on $G$ gives operations on the tensor product $\mathcal{A}\otimes G$. These operations agree with the natural one defined in \Cref{grassenv}.

First, note the following polynomial identities satisfied by $G$:
\begin{enumerate}
\item $\omega(x)-x\in\mathrm{Id}_\Omega(x)$, for any $\omega\in\Omega_1$. Thus, operations in $\Omega_1$ may be ignored.
\item given $\omega\in\Omega_n$, we have several associativity relations that are polynomial identities for $G$. For instance, if $n=3$, then one of the many possible combinations is:
$$
\omega(x_1,\omega(x_2,x_3,x_4),x_5)-\omega(\omega(x_1,x_2,x_3),x_4,x_5)\in\mathrm{Id}_\Omega(G).
$$
\item if $\omega$, $\omega'\in\Omega_n$, then $\omega-\omega'\in\mathrm{Id}_\Omega(G)$. Thus, we may fix one $\omega\in\Omega_n$, and any other $\omega'\in\Omega_n$ can be represented via $\omega$, modulo $\mathrm{Id}_\Omega(G)$.
\item if $\omega\in\Omega_n$ and $\omega'\in\Omega_{t}$, where $t>n$, then
$$
\omega'(x_1,\ldots,x_t)-\underbrace{\omega\cdots\omega}_\text{$t-n+1$ times}(\cdots(x_1,\ldots,x_n),\ldots,x_t)\in\mathrm{Id}_\Omega(G).
$$
It means that we may choose a $\omega\in\Omega_n$, where $n>1$ is the first index such that $\Omega_n\ne\emptyset$. Any other operation may be represented via $\omega$, modulo $\mathrm{Id}_\Omega(G)$.
\item If $\Omega_0\ne\emptyset$ and $\mu\in\Omega_0$, then for any $\omega\in\Omega_n$ (where $n>1$), we have a bilinear polynomial
$$
p(x,y)=\omega(x,y,\mu,\ldots,\mu)\in\mathbb{F}_\Omega\langle X\rangle.
$$
This polynomial behaves like a binary operation on $G$, and, in particular, the polynomial identities (2)-(4) hold valid if we replace $\omega$ by $p$.
\end{enumerate}
Let $T$ be the $T_\Omega$-ideal generated by all of the above polynomial identities. The identities (1)-(5) mean that we may replace the free $\Omega$-algebra by the relatively free $\Omega$-algebra $\mathbb{F}_\Omega\langle X\rangle/T$. If $\Omega_0=\emptyset$, then, from identities (3) and (4), this relatively free algebra can be seen as the set of all words of length at least $n$, where $n>1$ is the first index such that $\Omega_n\ne\emptyset$. From identity (2), these words can be left-normed. In other words, if we suppress the operations, then we can identify the words of $\mathbb{F}_\Omega\langle X\rangle/T$ with the words in the free associative algebra. In the special case where $\Omega_0\ne\emptyset$, then $\mathbb{F}_\Omega\langle X\rangle/T$ coincides with the free associative algebra $\mathbb{F}\langle X\rangle$. Hence, a basis of $\mathbb{F}_\Omega\langle X\rangle/T$ is described below:
\begin{enumerate}
\renewcommand{\labelenumi}{(\roman{enumi})}
\item If $\Omega_0\ne\emptyset$, then a basis consists of $1$ and $x_{i_1}\cdots x_{i_m}$, for $m\ge1$, and $x_{i_1}$, \dots, $x_{i_m}\in X$.
\item If $\Omega_0=\emptyset$ and $n>1$ is the first index such that $\Omega_n\ne\emptyset$, then a basis consists of all words $x_{i_1}\cdots x_{i_m}$, where $m\ge n$, and $x_{i_1}$, \dots, $x_{i_m}\in X$.
\end{enumerate}

In particular, we can identify the set of multilinear polynomials $P_{m,\Omega}/T\cap P_{m,\Omega}$ and the multilinear associative polynomials $P_m$, for all $m\ge n$. Moreover, we have $P_{m,\Omega}(G)=P_m(G)$. Thus, it is enough to find a set of polynomial identities in $I\subseteq\mathrm{Id}_\Omega(G)$ such that $P_m(G)\equiv_IP_{m,\Omega}$. If either $\Omega_2\ne\emptyset$, or $\Omega_0\ne\emptyset$, then the description of the polynomial identities $\mathrm{Id}_\Omega(G)$ coincides with the classical associative case (up to all the polynomial identities (1)-(5)).

\begin{Remark}\label{examp}
Let $\mathcal{A}$ be a (binary) associative algebra and assume that $\Omega_0=\emptyset$. Then, for any $\omega\in\Omega_n$ ($n>0$), we can define
$$
\omega:(a_1,\ldots,a_n)\in\mathcal{A}^n\mapsto a_1\cdots a_n\in\mathcal{A}.
$$
Using this, the polynomial identities (1)-(5) holds valid. Thus, $\mathcal{A}$ belongs to the variety of $\Omega$-algebras generated by $T$.
\end{Remark}

\begin{Thm}
Consider the relatively free algebra $\mathbb{F}_\Omega\langle X\rangle/T$. Assume that $\Omega_0\ne\emptyset$. Then,
$$
\mathrm{Id}_\Omega(G)=\langle[x_1,x_2,x_3]\rangle^T.
$$
If $\Omega_0=\emptyset$ and $n>1$ is the least integer such that $\Omega_n\ne\emptyset$, then a set of generators of $\Omega$-polynomial identities of $G$ is
$$
\left\{x_1\cdots x_i[x_a,x_b]x_{i+1}\cdots x_{n-2}-x_1\cdots x_{n-2}[x_1,x_b]\mid i=0,1,\ldots,n-3\right\}.
$$
\end{Thm}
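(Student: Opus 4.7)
The plan is to reduce both cases to the classical Krakowski--Regev description of $\mathrm{Id}(G)$ via the quotient $\mathbb{F}_\Omega\langle X\rangle/T$ introduced in the preceding discussion. That discussion already shows that, modulo $T$, multilinear $\Omega$-polynomials of degree $m\ge n$ coincide with ordinary multilinear associative polynomials $P_m$, and that the $\Omega$-operations on $G$ agree with its ordinary associative product. Hence, evaluating an $\Omega$-polynomial $f$ on $G$ returns the same value as evaluating its associative image $\bar f$ in $\mathbb{F}\langle X\rangle$, so $f\in\mathrm{Id}_\Omega(G)$ if and only if $\bar f\in\mathrm{Id}(G)$ in the classical sense. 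This is the bridge that will be used throughout.

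For the case $\Omega_0\neq\emptyset$, the polynomial $[x_1,x_2,x_3]$ is a legitimate element of $\mathbb{F}_\Omega\langle X\rangle/T$ because the binary product $p(x,y)=\omega(x,y,\mu,\ldots,\mu)$ from identity~(5) in the preceding discussion makes commutators available at every degree $\ge 2$. The Krakowski--Regev theorem, which asserts that the associative T-ideal of $G$ is generated by $[x_1,x_2,x_3]$, can then be transported verbatim through the identification above, yielding $\mathrm{Id}_\Omega(G)=\langle[x_1,x_2,x_3]\rangle^T$ with essentially no further work.

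For the case $\Omega_0=\emptyset$ with minimum arity $n>1$, the classical generator $[x_1,x_2,x_3]$ has degree $3$, which may be strictly less than $n$, so it does not directly lie in the free $\Omega$-algebra and must be ``padded'' to length $n$. One first verifies that each listed polynomial is an identity of $G$: since commutators are central in $G$, inserting $[g_a,g_b]$ at any position $i$ inside a product $g_1\cdots g_{n-2}$ yields the same element, so $x_1\cdots x_i[x_a,x_b]x_{i+1}\cdots x_{n-2}$ is independent of $i$ (modulo $\mathrm{Id}_\Omega(G)$), which is exactly what the stated relations encode. For completeness, one shows that modulo these generators (together with $T$) every multilinear $\Omega$-polynomial identity of degree $m\ge n$ reduces to a consequence of the classical Krakowski--Regev identity at degree $m$: use the listed relations to slide every commutator inside an arbitrary length-$m$ word to a canonical rightmost position, and then invoke the fact that, in degree $\ge n$, all associative identities of $G$ are consequences of $[x_1,x_2,x_3]$.

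The main obstacle is precisely the completeness step in the $\Omega_0=\emptyset$ case: because $[x_1,x_2,x_3]$ itself is not an $\Omega$-polynomial when $n>3$, one cannot simply invoke Krakowski--Regev. Instead, one must argue combinatorially, using only the listed degree-$n$ relations and $T$, that commutators remain central within every length-$m$ word for $m\ge n$. The expected inductive argument uses the length-$n$ relations as the base case and then bootstraps: a commutator appearing inside a length-$m$ word can be transported to a length-$n$ ``window'' within that word, displaced there via the listed relations, and the remainder of the word carried along using $T$. Once commutator centrality is established in all admissible degrees, the classical description of $\mathrm{Id}(G)\cap P_m$ transfers directly through the identification of the first paragraph and closes the argument.
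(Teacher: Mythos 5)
Your strategy is the same as the paper's: pass through the identification of $P_{m,\Omega}/(T\cap P_{m,\Omega})$ with the associative multilinear space $P_m$, note that the $\Omega$-operations on $G$ are iterated associative products, settle the case $\Omega_0\ne\emptyset$ by quoting the classical description of $\mathrm{Id}(G)$, and, for $\Omega_0=\emptyset$, check that the listed degree-$n$ polynomials hold in $G$ and then prove completeness by reducing an arbitrary multilinear polynomial to the classical basis of $P_m(G)$. The verification step and the observation that the listed relations encode centrality of a single commutator inside a length-$n$ window (equivalently, padded triple commutators) match the paper.

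The gap is in your completeness step. Sliding every commutator to a canonical rightmost position only yields, modulo the candidate $T_\Omega$-ideal, a spanning set of the form $x_{i_1}\cdots x_{i_r}[x_{j_1},x_{j_2}]\cdots[x_{j_{2s-1}},x_{j_{2s}}]$ with \emph{arbitrary} pairings of the commutator entries. These are linearly dependent modulo $\mathrm{Id}(G)$ (on $G$ one has, for instance, $[x_1,x_2][x_3,x_4]+[x_1,x_3][x_2,x_4]=0$, since $[a,b][c,d]=4a_1b_1c_1d_1$ is alternating in the odd components), so this spanning set is strictly larger than the Krakowski--Regev basis of $P_m(G)$ and the dimension count does not close. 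The paper closes it by explicitly deriving from the listed identities the additional swap relations $x_1\cdots x_i[x_a,x_b][x_c,x_d]x_{i+1}\cdots x_{n-4}-x_1\cdots x_i[x_a,x_c][x_b,x_d]x_{i+1}\cdots x_{n-4}$, which order the commutator entries and cut the spanning set down to the genuine basis. Your argument can be repaired without this extra step only if you prove the stronger statement that every padded, word-substituted instance $u[v_1,v_2,v_3]w$ of the triple commutator in each admissible degree lies in the $T_\Omega$-ideal of the candidates --- then the swap relations come for free as associative consequences of $[x_1,x_2,x_3]$ --- but centrality of a single commutator with variable entries, which is all you establish, is not enough. (The propagation from the degree-$n$ window to general admissible degree $m$ is left as a sketch both by you and by the paper, so that is not a point of difference.)
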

\begin{proof}
First, recall that, as an associative algebra, $\mathrm{Id}(G)=\langle[x_1,x_2,x_3]\rangle^T$, and a basis of $P_n(G)$ consists of all $x_{i_1}\cdots x_{i_r}[x_{j_1},x_{j_2}]\ldots[x_{j_{s-1}},x_{j_s}]$, where $s,r\ge0$, $s+r=n$, $s$ is even, $i_1<\cdots<i_r$, and $j_1<\cdots<j_s$ (see, for instance, \cite{Drenskybook}).

Now, let us assume that $\Omega_0=\emptyset$ and $n>1$ is the first index such that $\Omega_n\ne\emptyset$. With all the notations set, we shall compute the polynomial identities of the Grassmann algebra. First, note that, for any $i=0,1,\ldots,n-3$,
\begin{equation}\label{idG}
x_1\cdots x_i[x_a,x_b]x_{i+1}\cdots x_{n-2}-x_1\cdots x_{n-2}[x_1,x_b]\in\mathrm{Id}_\Omega(G).
\end{equation}
In particular, when $i=n-3$, the described polynomial is the triple commutator up to a sequence of variables, that is, $x_1\cdots x_{n-3}[x_{n-2},x_{n-1},x_n]$. Moreover, the difference between two consecutive identities of \eqref{idG} gives the triple commutator $x_1\cdots x_i[x_a,x_b,x_c]x_{i+1}\cdots x_{n-3}$ in the middle of a sequence of variables.

Now, we shall prove that the polynomials
$$
x_1\cdots x_i[x_a,x_b][x_c,x_d]x_{i+1}\cdots x_{n-4}-x_1\cdots x_i[x_a,x_c][x_b,x_d]x_{i+1}\cdots x_{n-4},
$$
for all $i=0,1,\ldots,n-4$, are consequence of \eqref{idG}. This follows from the fact that $x_1\cdots x_i[x_a,x][x,x_d]x_{i+1}\cdots x_{n-4}$ is a consequence of the same set of identities, which can be checked using standard computations.

It is known that the space of multilinear associative polynomials may be written as a linear combination of
$$
x_{i_1}\cdots x_{i_r}c_1\cdots c_t,
$$
where $i_1<\cdots<i_r$ and $c_1$, \dots, $c_t$ are commutators (see, for instance, \cite{Drenskybook}). Then, it is clear that the set
$$
x_{i_1}\cdots x_{i_r}[x_{j_1},x_{j_2}]\cdots [x_{j_{s-1}},x_{j_s}],\quad r,s\ge0,\,\text{$s$ even},\,i_1<\cdots<i_r,\,j_1<\cdots<j_s,
$$
generates $P_m$, modulo the T-ideal generated by $\eqref{idG}$. From the previous discussion, we obtain that this is a set of generators for $\mathrm{Id}_\Omega(G)$. 
\end{proof}

\begin{Problem}
Find a minimal basis of polynomial identities of $\mathrm{Id}_\Omega(G)$.
\end{Problem}
From all the discussion and the proof of the previous theorem, the polynomial identities of the finite-dimensional Grassmann algebra is as follows.
\begin{Thm}\label{fdgrassid}
Assume that $\Omega_0\ne\emptyset$, and let $m\in\mathbb{N}$, $k=\lfloor\frac{m}2\rfloor$. Then
$$
\mathrm{Id}_\Omega(G_m)=\langle[x_1,x_2,x_3],[x_1,x_2]\cdots[x_{2k+1},x_{2k+2}]\rangle^T.
$$
If $\Omega_0=\emptyset$, and $n>1$ is the least integer such that $\Omega_n\ne\emptyset$, then
$$
\mathrm{Id}_{\Omega}(G_m)=\mathrm{Id}_{\Omega}(G)+\langle[x_1,x_2]\ldots[x_{2k+1},x_{2k+2}]x_{2k+3}\cdots x_n\rangle^T,
$$
where $x_{2k+3}\cdots x_n$ is empty if $n\le 2k+2$.
\end{Thm}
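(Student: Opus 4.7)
The plan is to reduce the problem to the classical description of $\mathrm{Id}(G_m)$ for the associative Grassmann algebra, and then account for the $\Omega$-structure via the relations in $T$ already handled by the preceding theorem. First I would verify that the listed polynomials are indeed identities of $G_m$. The triple commutator $[x_1,x_2,x_3]$ is already an identity of $G$, hence a fortiori of $G_m$. For the product $[x_1,x_2]\cdots[x_{2k+1},x_{2k+2}]$, evaluating at products of Grassmann generators yields, up to a nonzero scalar, a monomial $e_{j_1}\cdots e_{j_{2k+2}}$ of length $2(k+1)$; since $2(k+1)>m$ whenever $k=\lfloor m/2\rfloor$, two indices must coincide and the value vanishes. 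In the $\Omega_0=\emptyset$ case we need the generator to have degree at least $n$ in order to be realized in $\mathbb{F}_\Omega\langle X\rangle/T$; the convention that $x_{2k+3}\cdots x_n$ is empty when $n\le 2k+2$ makes the listed polynomial the shortest admissible representative.

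Next I would show that these identities, together with those of $\mathrm{Id}_\Omega(G)$, generate all of $\mathrm{Id}_\Omega(G_m)$. Modulo $T$, a multilinear polynomial of degree $r$ is identified with a multilinear associative polynomial; modulo the identities of $\mathrm{Id}_\Omega(G)$ from the previous theorem (essentially the triple commutator together with the shuffling relations), such a polynomial can be rewritten in the canonical Drensky/Krakowski--Regev form
\[
x_{i_1}\cdots x_{i_a}[x_{j_1},x_{j_2}]\cdots[x_{j_{s-1}},x_{j_s}],\qquad a+s=r,\ s\text{ even},\ i_1<\cdots<i_a,\ j_1<\cdots<j_s.
\]
The classical description of $\mathrm{Id}(G_m)$ in the associative setting tells us that such a canonical form vanishes on $G_m$ precisely when $s\ge 2(k+1)$. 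Hence $\mathrm{Id}_\Omega(G_m)$ is obtained from $\mathrm{Id}_\Omega(G)$ by adjoining, as a $T_\Omega$-ideal generator, the assertion that any product of $k+1$ commutator factors annihilates $G_m$, which is exactly the polynomial stated in the theorem.

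The principal subtlety lies in the $\Omega_0=\emptyset$ case, where only polynomials of degree $\ge n$ are representable modulo $T$. When $2k+2<n$, the pure commutator product has too low degree and must be padded by the extra variables $x_{2k+3}\cdots x_n$. The delicate point is to check that every multilinear consequence of ``a product of $k+1$ commutators vanishes'' in higher degrees can be recovered, modulo $\mathrm{Id}_\Omega(G)$, from this specific padded generator. This should follow from the shuffling relations established in the previous theorem, which allow a commutator factor to be moved freely past single variables, so that any canonical-form identity with at least $k+1$ commutators and total degree $\ge\max(2k+2,n)$ can be reduced to a substitution into the padded generator after rearranging indices and left-multiplying by the appropriate ordered variables. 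The hard part is this last bookkeeping step, namely verifying that the specific single padded polynomial generates, as a $T_\Omega$-ideal, all of the ``$s\ge 2(k+1)$'' canonical forms of the admissible degrees; once this is done the theorem follows directly from the preceding one.
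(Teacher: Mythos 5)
Your proposal follows essentially the same route as the paper: reduce modulo $T$ and the generators of $\mathrm{Id}_\Omega(G)$ to the canonical form $x_{i_1}\cdots x_{i_r}[x_{j_1},x_{j_2}]\cdots[x_{j_{s-1}},x_{j_s}]$, and invoke the classical associative description of $\mathrm{Id}(G_m)$ to see that exactly the canonical forms with more than $k$ commutator factors must be adjoined. The paper's own proof is in fact terser than yours (it simply declares the argument ``a continuation'' of the previous theorem), and the bookkeeping step you flag as the hard part in the $\Omega_0=\emptyset$ case is precisely what the paper leaves implicit as well.
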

\begin{proof}
The finite dimensional Grassmann algebra $G_m$ has the set of associative polynomial identities given by $$
\mathrm{Id}(G_m)=\langle[x_1,x_2,x_3],[x_1,x_2]\cdots[x_{2k+1},x_{2k+2}]\rangle^T,
$$
and a basis of $P_n(G_m)$ is the same as of $G$, except that $s\le k$ (in the notation of the proof of the previous theorem). Now, the proof is a continuation of the arguments of the previous theorem. 
\end{proof}

In particular, $G_m$ and $G_{m'}$ satisfy the same set of $\Omega$-polynomial identities if and only if $\lfloor\frac{m}2\rfloor=\lfloor\frac{m'}2\rfloor$.

\subsection{Graded-simple algebras} First, we denote the vector space $\mathbb{FZ}_2=1\mathbb{F}\oplus\varepsilon\mathbb{F}$, and we define
$$
\varepsilon^m=\left\{\begin{array}{cc}1,&\text{ if $m$ is even},\\\varepsilon,&\text{ if $m$ is odd.}\end{array}\right.
$$
Using this terminology, we shall construct the algebra $\mathcal{A}\otimes\mathbb{FZ}_2$, where $\mathcal{A}$ is an arbitrary $\Omega$-algebra. As vector space, the elements of $\mathcal{A}\otimes\mathbb{FZ}_2$ are linear combination of elements of the kind $a:=a\otimes1$ and $\varepsilon a:=a\otimes\varepsilon$, for $a\in\mathcal{A}$. We define a structure of $\Omega$-algebra on $\mathcal{A}\otimes\mathbb{FZ}_2$ as follows. Given $\omega\in\Omega_n$ and $\varepsilon^{i_1}a_1$, \dots, $\varepsilon^{i_n}\in\mathcal{A}\otimes\mathbb{FZ}_2$, we set
$$
\omega(\varepsilon^{i_1}a_1,\ldots,\varepsilon^{i_n}a_n)=\varepsilon^{i_1+\cdots+i_n}\omega(a_1,\ldots,a_n).
$$
The given structure turns $\mathcal{A}\otimes\mathbb{FZ}_2$ into a $\mathbb{Z}_2$-graded $\Omega$-algebra. Note that, if we give the natural structure of $\Omega$-algebra on $\mathbb{FZ}_2$, as described in \Cref{examp}, then the just defined structure of $\Omega$-algebra on $\mathcal{A}\otimes\mathbb{FZ}_2$ coincides with the structure of $\Omega$-algebra on the tensor product of $\mathcal{A}$ and $\mathbb{FZ}_2$. Now, we have:

\begin{Lemma}\label{lem0}
Let $\mathcal{A}$ be an $\Omega$-algebra. Then $\mathcal{A}\otimes\mathbb{FZ}_2$ is graded-simple if and only if $\mathcal{A}$ is simple.
\end{Lemma}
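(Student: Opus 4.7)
The plan is to handle the two implications separately, with the forward direction (simple $\Rightarrow$ graded-simple) being the substantive one. For the easy converse, given a nonzero ideal $J\subseteq\mathcal{A}$ one checks, using the multiplication rule $\omega(\varepsilon^{i_1}a_1,\ldots,\varepsilon^{i_n}a_n)=\varepsilon^{i_1+\cdots+i_n}\omega(a_1,\ldots,a_n)$, that $J\otimes\mathbb{FZ}_2$ is a nonzero graded ideal of $\mathcal{A}\otimes\mathbb{FZ}_2$; graded-simplicity forces it to be the whole algebra, hence $J=\mathcal{A}$. The same formula shows $\mathcal{A}^2=0$ would entail $(\mathcal{A}\otimes\mathbb{FZ}_2)^2=0$, so $\mathcal{A}^2\neq 0$ as well.

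For the forward direction, the key preparatory observation I would establish is that $\mathcal{A}^2$ is itself an ideal of $\mathcal{A}$; combining this with simplicity gives $\mathcal{A}^2=\mathcal{A}$. Concretely, every $a\in\mathcal{A}$ admits a decomposition $a=\sum_i f_i(s_i,s_i',b_i^{(1)},\ldots,b_i^{(r_i)})$ for multilinear polynomials $f_i$ linear in $x_1,x_2$. The standing hypothesis $\Omega_n\neq\emptyset$ for some $n\geq 2$ is exactly what guarantees that such $f_i$ exist (one may take $f=\omega\in\Omega_n$ and treat the remaining $n-2$ slots as auxiliary $y_j$'s).

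Now let $I=I^0\oplus I^1$ be an arbitrary nonzero graded ideal of $\mathcal{A}\otimes\mathbb{FZ}_2$, and set $J^0:=\{a\in\mathcal{A}:a\otimes 1\in I^0\}$, $J^1:=\{a\in\mathcal{A}:a\otimes\varepsilon\in I^1\}$. The multiplication rule above makes it routine to verify that $J^0,J^1$ are ideals of $\mathcal{A}$, at least one of which is nonzero. Suppose first $J^0\neq 0$, so $J^0=\mathcal{A}$ and $I^0=\mathcal{A}\otimes 1$; for any $a\in\mathcal{A}$, decompose $a=\sum_i f_i(s_i,s_i',b_i^{(\cdot)})$ and evaluate each $f_i$ on $\mathcal{A}\otimes\mathbb{FZ}_2$ with the substitutions $s_i\otimes 1\in I$, $s_i'\otimes\varepsilon$, and $b_i^{(j)}\otimes 1$; the result equals $f_i(s_i,s_i',b_i^{(\cdot)})\otimes\varepsilon$ and lies in $I$ by ideal closure, so $a\otimes\varepsilon\in I$ and $I=\mathcal{A}\otimes\mathbb{FZ}_2$. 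The case $J^1\neq 0$ is symmetric: substitute $s_i\otimes\varepsilon,s_i'\otimes\varepsilon\in I$ and $b_i^{(j)}\otimes 1$, using $\varepsilon^2=1$ to land $a\otimes 1$ in $I^0$.

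The main point to be careful with is this second case, $J^1\neq 0$: one genuinely needs both $\mathcal{A}^2=\mathcal{A}$ and the existence of an operation of arity $\geq 2$ to combine two odd-parity elements of $I^1$ and thereby populate $I^0$. Without the assumption $\Omega_n\neq\emptyset$ for some $n\geq 2$, the lemma would fail outright, so this is an inherent rather than incidental obstacle.
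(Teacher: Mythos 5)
Your proof is correct and follows essentially the same route as the paper: the converse is handled identically, and the forward direction likewise takes a nonzero homogeneous piece of a graded ideal, uses simplicity to express every element of $\mathcal{A}$ via multilinear polynomials with at least two designated slots (which is where the standing hypothesis $\Omega_n\ne\emptyset$ for some $n\ge2$ enters, as you rightly flag), and uses the $\varepsilon$-parity rule to populate the other homogeneous component. The only cosmetic difference is that you phrase the simplicity input as $\mathcal{A}^2=\mathcal{A}$ applied to the preimage ideals $J^0$, $J^1$, whereas the paper works with a single nonzero homogeneous element $a\in I$ and the equality $a\cdot\mathcal{A}=\mathcal{A}$.
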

\begin{proof}
Assume that $\mathcal{A}$ is simple as an $\Omega$-algebra. Then $(\mathcal{A}\otimes\mathbb{FZ}_2)^2\supseteq\mathcal{A}^2\ne0$. Let $I\subseteq\mathcal{A}\otimes\mathbb{FZ}_2$ be a $\mathbb{Z}_2$-graded ideal, and assume that $I\ne0$. Then, we can find $0\ne a+\varepsilon b\in I$, where $a$, $b\in\mathcal{A}$. Then, either $a\ne0$ or $b\ne0$ (or both are nonzero), so assume that $a\ne0$. Since $I$ is a graded ideal, it means that $a\in I$. Since $\mathcal{A}$ is simple, one has $a\cdot\mathcal{A}=\mathcal{A}$. Thus, for any $c\in\mathcal{A}$, we can find a multilinear polynomial $f=f(x_1,x_2,\ldots,x_m)\in\mathbb{F}_\Omega\langle X\rangle$ and $a_2$, \dots, $a_m\in\mathcal{A}$, such that $c=f(a,a_2,\ldots,a_m)$. It also implies that $\varepsilon c=f(\varepsilon a,a_2,\ldots,a_m)$. Thus, $c$, $\varepsilon c\in I$, so $I=\mathcal{A}\otimes\mathbb{FZ}_2$. An analogous argument holds if $b\ne0$.

On the other hand, assume that $\mathcal{A}$ is not simple. If $\mathcal{A}^2=0$, then the construction of $\mathcal{A}\otimes\mathbb{FZ}_2$ implies that $(\mathcal{A}\otimes\mathbb{FZ}_2)^2=0$. On the other hand, if $I\subseteq\mathcal{A}$ is an ideal, $I\ne0$ and $I\ne\mathcal{A}$, then it is elementary to see that $I\otimes\mathbb{FZ}_2$ is a graded ideal of $\mathcal{A}\otimes\mathbb{FZ}_2$. Hence, in any case, $\mathcal{A}\otimes\mathbb{FZ}_2$ is not graded-simple.
\end{proof}

Now, we shall prove the following description of $\mathbb{Z}_2$-graded-simple $\Omega$-algebras.
\begin{Thm}\label{gradedsimplealg}
Let $\mathcal{A}$ be a $\mathbb{Z}_2$-graded $\Omega$-algebra which is graded-simple, over a field $\mathbb{F}$ of characteristic not $2$. Then either $\mathcal{A}$ is simple as an $\Omega$-algebra, or $\mathcal{A}\cong_{2}\mathcal{B}\otimes\mathbb{FZ}_2$ as $\Omega'$-algebras, where $\mathcal{B}$ is a simple $\Omega'$-algebra, and $\Omega'=(\Omega_0\cap\mathcal{A}^0)\cup\bigcup_{n>0}\Omega_n$.
\end{Thm}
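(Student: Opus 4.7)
The plan is to suppose that $\mathcal{A}$ is not simple as an $\Omega$-algebra, and then to construct $\mathcal{B}$ directly as one half of a $\sigma$-symmetric decomposition of $\mathcal{A}$. Fix a proper nonzero ideal $I \subsetneq \mathcal{A}$; since $\mathcal{A}$ is graded-simple, $I$ must fail to be $\mathbb{Z}_2$-graded. Because $\mathrm{char}\,\mathbb{F} \neq 2$, the linear involution $\sigma \colon \mathcal{A} \to \mathcal{A}$ acting as $+1$ on $\mathcal{A}^0$ and $-1$ on $\mathcal{A}^1$ is well-defined, and the grading condition on positive-arity operations immediately gives $\omega(\sigma(a_1), \ldots, \sigma(a_n)) = \sigma(\omega(a_1, \ldots, a_n))$ for every $\omega \in \Omega_n$ with $n \geq 1$. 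Since the definition of an ideal uses only operations of positive arity, $\sigma(I)$ is also an ideal; both $I + \sigma(I)$ and $I \cap \sigma(I)$ are $\sigma$-invariant, hence $\mathbb{Z}_2$-graded, and graded-simplicity forces $\mathcal{A} = I \oplus \sigma(I)$.

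The decisive observation is a cross-term vanishing: whenever an $\omega \in \Omega_n$ with $n \geq 1$ is evaluated on arguments that include both an element of $I$ and an element of $\sigma(I)$, the value lies in $I \cap \sigma(I) = 0$ (the ideal property applies to each summand separately). A short induction on the depth of a multilinear polynomial extends this to arbitrary multilinear $\Omega$-polynomials. Using this, set $\mathcal{B} := I$ with the $\Omega'$-structure obtained by restricting every $\omega \in \Omega_n$ ($n \geq 1$) to $I$, and, for each nullary $\mu \in \Omega_0 \cap \mathcal{A}^0$, declaring $\mu_{\mathcal{B}} \in I$ to be the unique element such that $\mu = \mu_{\mathcal{B}} + \sigma(\mu_{\mathcal{B}})$ (possible because $\sigma(\mu) = \mu$ and $\mathcal{A} = I \oplus \sigma(I)$). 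Then define
\[
\phi \colon \mathcal{B} \otimes \mathbb{FZ}_2 \to \mathcal{A}, \qquad \phi(b \otimes 1) = b + \sigma(b), \qquad \phi(b \otimes \varepsilon) = b - \sigma(b).
\]
The $\sigma$-eigenspace description $\mathcal{A}^0 = \{b + \sigma(b) : b \in I\}$, $\mathcal{A}^1 = \{b - \sigma(b) : b \in I\}$ shows $\phi$ is a graded linear bijection, and compatibility with each $\omega \in \Omega_n$ ($n \geq 1$) reduces via the cross-term vanishing to the identity $\omega(b_1 \pm \sigma(b_1), \ldots, b_n \pm \sigma(b_n)) = \omega(b_1, \ldots, b_n) \pm \sigma(\omega(b_1, \ldots, b_n))$ (with signs determined by the parity of the exponents), while compatibility with $\mu \in \Omega_0 \cap \mathcal{A}^0$ is built into the definition of $\mu_{\mathcal{B}}$.

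It remains to verify that $\mathcal{B}$ is simple as an $\Omega'$-algebra. First, $\mathcal{B}^2 \neq 0$ because cross-term vanishing yields $\mathcal{A}^2 = I^2 \oplus \sigma(I)^2$ with $\sigma(I)^2 = \sigma(I^2)$, so $I^2 = 0$ would contradict $\mathcal{A}^2 \neq 0$. Second, any proper nonzero $\Omega'$-ideal $J \subsetneq \mathcal{B}$ would, by the same cross-term vanishing (the $\sigma(I)$-components of outer arguments automatically die), be an $\Omega$-ideal of $\mathcal{A}$, so $J + \sigma(J)$ is a nonzero $\sigma$-invariant (hence graded) ideal, forcing $J + \sigma(J) = \mathcal{A}$; combined with $J \subseteq I$ and $\sigma(J) \subseteq \sigma(I)$, this yields $J = I$, contradicting properness. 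The step I expect to require the most care is the bookkeeping around nullary operations: those in $\Omega_0 \cap \mathcal{A}^1$ have no natural counterpart in the ungraded algebra $\mathcal{B}$, and this is precisely why the isomorphism is stated in the weaker signature $\Omega'$ rather than $\Omega$.
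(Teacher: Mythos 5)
Your proof is correct, and while it follows the same overall strategy as the paper (take a proper nonzero ungraded ideal $I$, set $\mathcal{B}=I$, and build an isomorphism $I\otimes\mathbb{FZ}_2\to\mathcal{A}$), the verification runs along a genuinely different track. The paper never introduces the grading involution $\sigma$; instead it works with the projections $\pi_0,\pi_1$ onto the homogeneous components, first showing that $\pi_i|_I\colon I\to\mathcal{A}^i$ are linear bijections (because $I$ contains no nonzero homogeneous element), and then proving quantitative identities of the form $\pi_j(f(a_1,\ldots,a_m))=2^{m-1}f(\pi_{i_1}(a_1),\ldots,\pi_{i_m}(a_m))$ for parity-$j$ sequences, which is what makes the map $x+\varepsilon y\mapsto 2\pi_0(x)+2\pi_1(y)$ a homomorphism. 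Your route replaces all of that with the eigenspace decomposition $\mathcal{A}=I\oplus\sigma(I)$ (both summands being ideals) together with cross-term vanishing $I\cdot\sigma(I)\subseteq I\cap\sigma(I)=0$ and the fact that $\sigma$ is an automorphism for the positive-arity operations; since $2\pi_0(x)=x+\sigma(x)$ and $2\pi_1(y)=y-\sigma(y)$, your $\phi$ is literally the paper's $\varphi$, but the $2^{m-1}$ bookkeeping disappears. The paper also deduces simplicity of $\mathcal{B}$ indirectly, from its Lemma stating that $\mathcal{B}\otimes\mathbb{FZ}_2$ is graded-simple iff $\mathcal{B}$ is simple, whereas you prove it directly; both arguments are sound. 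Your treatment of the nullary operations (defining $\mu_{\mathcal{B}}$ as the $I$-component of $\mu\in\Omega_0\cap\mathcal{A}^0$) coincides with the paper's $\mu\mapsto\varphi^{-1}(\mu)$. What your approach buys is conceptual transparency and a closer kinship with the cocycle/loop-algebra viewpoint of Elduque cited in the paper; what the paper's approach buys is that the projection identities (its Lemmas on $\pi_i$ and the $2^{m-1}$ relations) are stated as reusable standalone facts about ungraded ideals of graded-simple $\Omega$-algebras.
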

It is interesting to mention that, when $\Omega=\Omega_2=\{\cdot\}$ (that is, $\mathcal{A}$ is a binary algebra), this result is already known. Moreover, Elduque proves a more general result for an arbitrary grading group $G$ \cite{Elduque}.

We shall break the proof of \Cref{gradedsimplealg} in a few steps. From now on, on this subsection, unless otherwise explicitly mentioned, we shall assume that $\mathcal{A}=\mathcal{A}^0\oplus\mathcal{A}^1$ is a $\mathbb{Z}_2$-graded $\Omega$-algebra which is graded-simple. Moreover, we shall assume that $\mathcal{A}$ is not simple as an $\Omega$-algebra. We denote by $\pi_i:\mathcal{A}\to\mathcal{A}$ the projection $\mathcal{A}\to\mathcal{A}_i$ (with respect to the decomposition $\mathcal{A}=\mathcal{A}^0\oplus\mathcal{A}^1$), followed by the embedding $\mathcal{A}^i\hookrightarrow\mathcal{A}$. We shall use the same symbol to denote the projection $\pi_i:\mathcal{A}\to\mathcal{A}^i$.

\begin{Lemma}\label{lem1}
Let $I\subseteq\mathcal{A}$ be an ungraded ideal such that $I\ne0$ and $I\ne\mathcal{A}$. Then, the maps $\pi_i|_I:I\to\mathcal{A}_i$ are linear isomorphisms.
\end{Lemma}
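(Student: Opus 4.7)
The plan is to establish injectivity and surjectivity of each projection $\pi_i|_I$ by sandwiching $I$ between two canonically associated graded ideals and invoking graded-simplicity. Since $\mathcal{A}$ is graded-simple and $I$ is a proper nonzero ideal, $I$ cannot itself be a graded subspace; but the two graded objects attached to it --- an ``outer'' one containing $I$ and an ``inner'' one contained in $I$ --- must, by graded-simplicity, collapse to the extremes $\mathcal{A}$ and $0$ respectively.

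For surjectivity I would set $J_i := \pi_i(I) \subseteq \mathcal{A}^i$ and show that $J := J_0 \oplus J_1$ is a graded ideal of $\mathcal{A}$. Given $a \in J_0$, pick $x \in I$ with $\pi_0(x) = a$ and write $x = a + b$ with $b = \pi_1(x) \in J_1$. For any $\omega \in \Omega_n$ and homogeneous $c_2, \ldots, c_n$ of total degree $k$, the element $\omega(x, c_2, \ldots, c_n) \in I$ decomposes by multilinearity as $\omega(a, c_2, \ldots, c_n) + \omega(b, c_2, \ldots, c_n)$, whose summands live in $\mathcal{A}^k$ and $\mathcal{A}^{k+1}$. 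Projecting onto each of those two components yields $\omega(a, c_2, \ldots, c_n) \in J_k$; the analogous argument starting from $b' \in J_1$ gives $\omega(b', c_2, \ldots, c_n) \in J_{k+1}$, and the same reasoning applies when the distinguished slot is not the first. Consequently $J$ is closed under every operation and is a graded ideal; since $0 \ne I \subseteq J$, graded-simplicity forces $J = \mathcal{A}$, so $\pi_i(I) = \mathcal{A}^i$.

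For injectivity I would look at the ``inner'' object $K := (I \cap \mathcal{A}^0) \oplus (I \cap \mathcal{A}^1) \subseteq I$. A homogeneous element of $I$ remains homogeneous and remains in $I$ after applying any $\omega$, so $K$ is a graded ideal. Graded-simplicity gives $K = 0$ or $K = \mathcal{A}$; the latter is excluded since it would force $I = \mathcal{A}$. Hence $I \cap \mathcal{A}^i = 0$ for each $i$, which is exactly $\ker(\pi_{1-i}|_I) = 0$.

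The main obstacle is verifying that $J_0 \oplus J_1$ is actually an ideal: the projections $\pi_i$ are not $\Omega$-algebra homomorphisms, so one cannot simply transport the ideal structure of $I$ through them. The key observation that does the work is that $\omega$ applied to a mixed-parity element $a + b \in I$ produces two summands of different $\mathbb{Z}_2$-degrees, and each of those summands, on its own, lands in the appropriate $J_k$ after projecting onto that degree. Multilinearity of the operations in $\Omega$ is what lets one pass from homogeneous substitutions to arbitrary ones.
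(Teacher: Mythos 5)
Your proof is correct. The injectivity half is in substance the same as the paper's: the paper observes that a nonzero homogeneous element of $I$ would generate a nonzero graded ideal inside $I$, forcing $I=\mathcal{A}$; your ideal $K=(I\cap\mathcal{A}^0)\oplus(I\cap\mathcal{A}^1)$ packages exactly that observation. The surjectivity half is organized differently. The paper argues element-wise: it fixes $x\in I$ with $\pi_0(x)\ne0$, uses graded-simplicity to write an arbitrary target $y$ as $f(\pi_0(x),a_2,\ldots,a_m)$ for a multilinear $f$ and homogeneous $a_j$, and then substitutes $x$ for $\pi_0(x)$ and reads off $y$ as a homogeneous component of $f(x,a_2,\ldots,a_m)\in I$. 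You instead show directly that $J=\pi_0(I)\oplus\pi_1(I)$ is a graded ideal containing $I$, so $J=\mathcal{A}$. Both proofs hinge on the same computation --- apply an operation to a mixed-parity element of $I$ and project onto homogeneous components --- but your version avoids the (slightly delicate) step of representing an arbitrary element of the ideal generated by $\pi_0(x)$ as a single polynomial value, and it isolates cleanly where multilinearity and graded-simplicity are each used. The one point you rightly flag and handle is that closure of $J$ must be checked for every slot of every $\omega$ and then extended from homogeneous to arbitrary outside arguments by multilinearity; with that, the argument is complete.
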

\begin{proof}
Note that $I$ cannot contain a homogeneous element. Indeed, assume that $a\in I$ is homogeneous, and let $J$ be the graded-ideal of $\mathcal{A}$ generated by $a$. Then, $J$ is a nonzero homogeneous ideal contained in $I$. Since $\mathcal{A}$ is graded simple, then $\mathcal{A}=J\subseteq I$, a contradiction. Now, given $x\in\mathcal{A}$, the condition $\pi_i(x)=0$ implies either that $x$ is homogeneous of $\mathbb{Z}_2$-degree equals to $i+1$, or $x=0$. From the previous discussion the former is impossible, so we have that $\pi_i|_I$ is an one-to-one linear map. Now, let $x\in I\setminus\{0\}$, and let $y\in\mathcal{A}^0$. Then $\pi_0(x)\ne0$ and $\mathcal{A}=\mathcal{A}\cdot\pi_0(x)$. Thus, we can find a multilinear polynomial $f(x_1,x_2,\ldots,x_m)\in\mathbb{F}_\Omega\langle X\rangle$ and homogeneous $a_2$, $a_3$, \dots, $a_m\in\mathcal{A}$ such that $y=f(\pi_0(x),a_2,a_3,\ldots,a_m)$. Now,
$$
f(x,a_2,a_3,\ldots,a_m)=f(\pi_0(x),a_2,a_3,\ldots,a_m)+f(\pi_1(x),a_2,a_3,\ldots,a_m),
$$
is the decomposition as a sum of homogeneous element. It means that $y=\pi_j(f(x,a_2,a_3,\ldots,a_m))$ (for some $j\in\{0,1\}$), and $f(x,a_2,a_3,\ldots,a_m)\in I$, since $I$ is an ideal. Thus, the restriction of the projections are onto maps.
\end{proof}
From now on, we shall fix an ungraded ideal $I\subseteq\mathcal{A}$, such that $I\ne0$ and $I\ne\mathcal{A}$.

\begin{Lemma}\label{lem2}
Let $f=f(x_1,\ldots,x_m)\in\mathbb{F}_\Omega\langle X\rangle$ be a multilinear polynomial, where $m\ge2$, and $a_3$, \dots, $a_m\in\mathcal{A}$ be $\mathbb{Z}_2$-homogeneous elements. Denote
$$
d:(x,y)\in\mathcal{A}\times\mathcal{A}\mapsto f(x,y,a_3,\ldots,a_m)\in\mathcal{A}.
$$
Then, given $x$, $y\in I$, one has
$$
d(\pi_0(x),\pi_1(y))=d(\pi_1(x),\pi_0(y)),\quad d(\pi_0(x),\pi_0(y))=d(\pi_1(x),\pi_1(y)).
$$
\end{Lemma}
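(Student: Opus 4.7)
The plan is to exploit the grading involution $\tau: \mathcal{A} \to \mathcal{A}$ defined by $\tau(a) = (-1)^{|a|}a$ on homogeneous elements and extended linearly. First I would verify that $\tau$ is an $\Omega$-algebra automorphism. For $\omega \in \Omega_n$ and homogeneous $b_1, \ldots, b_n$, the grading condition $\omega(\mathcal{A}^{i_1}, \ldots, \mathcal{A}^{i_n}) \subseteq \mathcal{A}^{i_1 + \cdots + i_n}$ forces both $\tau(\omega(b_1, \ldots, b_n))$ and $\omega(\tau b_1, \ldots, \tau b_n)$ to equal $(-1)^{|b_1|+\cdots+|b_n|}\omega(b_1, \ldots, b_n)$; any nullary $\omega$ lies in $\mathcal{A}^0$ and is $\tau$-fixed. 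The payoff is that, although $\tau x$ and $\tau y$ generally leave $I$, the ideal property of $I$ still places $d(\tau x, y)$ and $d(x, \tau y)$ back in $I$.

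Next, set $A = d(\pi_0 x, \pi_0 y)$, $B = d(\pi_0 x, \pi_1 y)$, $C = d(\pi_1 x, \pi_0 y)$, $D = d(\pi_1 x, \pi_1 y)$, and $s = |a_3| + \cdots + |a_m| \pmod{2}$. By grading, $A, D$ have $\mathbb{Z}_2$-degree $s$ while $B, C$ have degree $s+1$. Multilinearity of $d$ in its first two slots yields
\begin{align*}
d(x, y) &= A + B + C + D,\\
d(x, \tau y) &= A - B + C - D,\\
d(\tau x, y) &= A + B - C - D,
\end{align*}
and all three left-hand sides lie in $I$. Forming half-sums and half-differences (legal since $\mathrm{char}\,\mathbb{F} = 0$) produces
$$A + B,\quad A + C,\quad B + D,\quad C + D \in I.$$

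Suppose $s = 0$ (the parity $s = 1$ is symmetric). Then $A, D \in \mathcal{A}^0$ and $B, C \in \mathcal{A}^1$. The elements $A + B$ and $A + C$ both lie in $I$ and share the $\pi_0$-component $A$, so the injectivity of $\pi_0|_I$ from \Cref{lem1} forces $A + B = A + C$, giving $B = C$. Similarly, $A + B$ and $B + D$ both lie in $I$ with common $\pi_1$-component $B$, so the injectivity of $\pi_1|_I$ gives $A + B = B + D$, i.e.\ $A = D$. Both equalities of the lemma follow.

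The only conceptual hurdle is recognising that $\tau$ should play a role at all; once it is identified as an $\Omega$-algebra automorphism, the rest is forced by the multilinearity of $d$ together with the rigidity of $I$ supplied by \Cref{lem1}.
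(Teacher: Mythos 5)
Your proof is correct and is essentially the paper's argument: the four elements $A+B$, $A+C$, $C+D$, $B+D$ of $I$ that you manufacture via $\tau$ and half-sums are exactly $d(\pi_0(x),y)$, $d(x,\pi_0(y))$, $d(\pi_1(x),y)$, $d(x,\pi_1(y))$, which the paper places in $I$ directly from the ideal property (one argument already lies in $I$), and the concluding step --- two elements of $I$ sharing a homogeneous component must coincide by the injectivity of $\pi_i|_I$ from \Cref{lem1} --- is identical. The automorphism $\tau$ is therefore a harmless detour rather than a genuinely different idea.
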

\begin{proof}
On one hand, $d(\pi_0(x),y)$, $d(x,\pi_0(y))\in I$. Moreover,
$$
d(\pi_0(x),y)=d(\pi_0(x),\pi_0(y))+d(\pi_0(x),\pi_1(y))
$$
is the decomposition of $d(\pi_0(x),y)$ as a sum of homogeneous elements. On the other hand,
$$
d(x,\pi_0(y))=d(\pi_0(x),\pi_0(y))+d(\pi_1(x),\pi_0(y))
$$
is the decomposition of $d(x,\pi_0(y))$ as the sum of its homogeneous components. One of the homogeneous component is the same for both elements. From \Cref{lem1}, it implies that $d(\pi_0(x),y)=d(x,\pi_0(y))$, since both belong to $I$. As a consequence, the other homogeneous component also coincide, that is,
$$
d(\pi_0(x),\pi_1(y))=d(\pi_1(x),\pi_0(y)).
$$
So we get the first relation.

Now,
$$
d(\pi_1(x),y)=d(\pi_1(x),\pi_0(y))+d(\pi_1(x),\pi_1(y)).
$$
Again, a homogeneous component of this element coincide with a homogeneous component of $d(x,\pi_0(y))$. Then, from \Cref{lem1}, both elements coincide so the other homogeneous component must be equal. It means that $d(\pi_0(x),\pi_0(y))=d(\pi_1(x),\pi_1(y))$.
\end{proof}

As a consequence, we have:
\begin{Lemma}\label{lem3}
Let $f=f(x_1,\ldots,x_m)\in\mathbb{F}_\Omega\langle X\rangle$ be a multilinear polynomial and $a_1$, \dots, $a_m\in I$. Then:
\begin{enumerate}
\renewcommand{\labelenumi}{(\roman{enumi})}
\item $\pi_0(f(a_1,\ldots,a_m))=2^{m-1}f(\pi_{i_1}(a_1),\ldots,\pi_{i_m}(a_m))$, for any sequence $(i_1,\ldots,i_m)$ containing an even number of $1$'s,
\item $\pi_1(f(a_1,\ldots,a_m))=2^{m-1}f(\pi_{i_1}(a_1),\ldots,\pi_{i_m}(a_m))$, for any sequence $(i_1,\ldots,i_m)$ containing an odd number of $1$'s.
\end{enumerate}
\end{Lemma}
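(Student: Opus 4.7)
The plan is to expand $f(a_1,\ldots,a_m)$ by multilinearity in each slot using the decomposition $a_\ell=\pi_0(a_\ell)+\pi_1(a_\ell)$, and then use \Cref{lem2} repeatedly to show that all summands of a given parity coincide. By multilinearity,
\[
f(a_1,\ldots,a_m)=\sum_{\epsilon\in\{0,1\}^m}f(\pi_{\epsilon_1}(a_1),\ldots,\pi_{\epsilon_m}(a_m)).
\]
Since every $\omega\in\Omega$ preserves the $\mathbb{Z}_2$-grading, each summand $f(\pi_{\epsilon_1}(a_1),\ldots,\pi_{\epsilon_m}(a_m))$ is $\mathbb{Z}_2$-homogeneous of degree $\epsilon_1+\cdots+\epsilon_m\pmod 2$. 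Hence $\pi_0$ applied to the sum picks out exactly the even-parity summands, and $\pi_1$ picks out the odd-parity ones. There are $2^{m-1}$ sequences of each parity.

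The remaining task is to show that all even-parity summands are equal, and likewise for odd parity. I claim: whenever $\epsilon,\epsilon'\in\{0,1\}^m$ satisfy $\sum_\ell\epsilon_\ell\equiv\sum_\ell\epsilon'_\ell\pmod 2$, one has
\[
f(\pi_{\epsilon_1}(a_1),\ldots,\pi_{\epsilon_m}(a_m))=f(\pi_{\epsilon'_1}(a_1),\ldots,\pi_{\epsilon'_m}(a_m)).
\]
The set $S=\{\ell:\epsilon_\ell\ne\epsilon'_\ell\}$ has even cardinality; partition it into pairs $\{j_1,k_1\},\ldots,\{j_s,k_s\}$ and transform $\epsilon$ into $\epsilon'$ by simultaneously flipping the two coordinates in one pair at a time. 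For a single pair $\{j_t,k_t\}$, fix the remaining arguments as the (homogeneous) projections $\pi_{\epsilon_\ell}(a_\ell)$ for $\ell\notin\{j_t,k_t\}$ and, after permuting the variables of $f$ so that positions $j_t,k_t$ become positions $1,2$, apply \Cref{lem2} with $x=a_{j_t},y=a_{k_t}\in I$. Depending on which of the four cases $(\epsilon_{j_t},\epsilon_{k_t})\in\{(0,0),(1,1),(0,1),(1,0)\}$ is being toggled to its opposite, one of the two identities of \Cref{lem2},
\[
d(\pi_0(x),\pi_0(y))=d(\pi_1(x),\pi_1(y))\quad\text{or}\quad d(\pi_0(x),\pi_1(y))=d(\pi_1(x),\pi_0(y)),
\]
asserts that the flip preserves the value. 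After $s$ such flips we reach $\epsilon'$, proving the claim.

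Combining the two steps: all $2^{m-1}$ even-parity summands equal $f(\pi_{i_1}(a_1),\ldots,\pi_{i_m}(a_m))$ for any one fixed sequence $(i_1,\ldots,i_m)$ of even parity, and their sum is $\pi_0(f(a_1,\ldots,a_m))$, giving (i); assertion (ii) follows by the same argument applied to the odd-parity summands. The only delicate point is the combinatorial reduction in the claim, and it dissolves once one observes that any two same-parity binary strings differ in an even number of coordinates and thus can be interconverted by a sequence of simultaneous two-coordinate flips, each of which is exactly the content of \Cref{lem2}.
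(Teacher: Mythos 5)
Your proof is correct and follows essentially the same route as the paper's: expand $f(a_1,\ldots,a_m)$ by multilinearity, observe that $\pi_0$ and $\pi_1$ select the $2^{m-1}$ summands of the corresponding parity, and use \Cref{lem2} to show that all same-parity summands coincide. The only difference is that you make explicit the pair-flipping reduction from the two-slot statement of \Cref{lem2} to arbitrary same-parity sequences, a combinatorial step the paper asserts without detail.
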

\begin{proof}
First, from \Cref{lem2},
$$
f(\pi_{i_1}(a_1),\ldots,\pi_{i_m}(a_m))=f(\pi_{j_1}(a_1),\ldots,\pi_{j_m}(a_m))
$$
whenever $i_1+\cdots+i_m\equiv_2 j_1+\cdots+j_m$. Now, let $(i_1,\ldots,i_m)\in\{0,1\}^m$ be a sequence containing an even number of $1$'s. Writing $a_\ell=\pi_0(a_\ell)+\pi_1(a_\ell)$, one has, from multilinearity:
\begin{align*}
\pi_0(f(a_1,\ldots,a_m))&=\sum_{\substack{(j_1,\ldots,j_m)\\j_1+\cdots+j_m\equiv_20}}f(\pi_{j_1}(a_1),\ldots,\pi_{j_m}(a_m))\\&=2^{m-1}f(\pi_{i_1}(a_1),\ldots,\pi_{i_m}(a_m)).
\end{align*}
This proves (i). In a similar way we get (ii).
\end{proof}

Finally, we can prove the main result of this subsection:
\begin{proof}[Proof of \Cref{gradedsimplealg}]
If $\mathcal{A}$ is simple as an $\Omega$-algebra, then there is nothing to do. Otherwise, let $I\subseteq\mathcal{A}$ be an ungraded ideal, such that $I\ne0$ and $I\ne\mathcal{A}$. Denote $\Omega_{>0}=\cup_{n>0}\Omega_n$. As $I$ is itself an $\Omega_{>0}$-algebra, define
$$
\varphi:x+\varepsilon y\in I\otimes\mathbb{FZ}_2\mapsto2\pi_0(x)+2\pi_1(y)\in\mathcal{A}.
$$
Clearly $\varphi$ is a well-defined $\mathbb{Z}_2$-graded linear map, and \Cref{lem1} tells us that $\varphi$ is a linear isomorphism. It remains to show that $\varphi$ is a homomorphism of $\Omega_{>0}$-algebras. So, let $n>0$, $\omega\in\Omega_n$, and $\varepsilon^{i_1}a_1$, \dots, $\varepsilon^{i_n}a_n\in I\otimes\mathbb{FZ}_2$, and denote $i=i_1+\cdots+i_n$. Then, using \Cref{lem3}, one has
\begin{align*}
\varphi(\omega(\varepsilon^{i_1}a_1,\ldots,\varepsilon^{i_n}a_n))&=2\pi_i(\omega(a_1,\ldots,a_n))=2^n\omega(\pi_{i_1}(a_1),\ldots,\pi_{i_n}(a_n))\\%
&=\omega(2\pi_{i_1}(a_1),\ldots,2\pi_{i_n}(a_n))=\omega(\varphi(\varepsilon^{i_1}a_1),\ldots,\varphi(\varepsilon^{i_n}a_n)).
\end{align*}
Thus, $I\otimes\mathbb{FZ}_2\cong_2\mathcal{A}$ as $\mathbb{Z}_2$-graded $\Omega_{>0}$-algebras. Now, from \Cref{lem0}, we see that $I$ is a simple $\Omega_{>0}$-algebra. Moreover, for any $\mu\in\Omega_0$, if $\mu\in\mathcal{A}^0$, then we can define $\mu\mapsto\varphi^{-1}(\mu)\in I$. Thus, $I$ contains all $0$-ary operations of $\mathcal{A}$ of trivial homogeneous degree. Hence, $I$ is an $\Omega'$-algebra, and $I\otimes\mathbb{FZ}_2\cong_2\mathcal{A}$ is an isomorphism of $\Omega'$-algebras.
\end{proof}

\subsection{Some generalization}
Using the results of this section, we obtain the following generalizations of the counter-examples found in the associative case. We highlight these cases. First, we need a technical result. The associative case is proved in \cite[Theorem 7.2.3]{AGPR}. The same argument holds valid in the context of $\Omega$-algebras.
\begin{Lemma}
Let $\mathcal{A}$, $\mathcal{A}'$, $\mathcal{B}$, $\mathcal{B}'$ be finite-dimensional $\Omega$-algebras, and assume that $\mathrm{Id}_\Omega(\mathcal{A}')\subseteq\mathrm{Id}_\Omega(\mathcal{A})$, and $\mathrm{Id}_\Omega(\mathcal{B}')\subseteq\mathrm{Id}_\Omega(\mathcal{B})$. Then
$$
\mathrm{Id}_\Omega(\mathcal{A}'\otimes\mathcal{B}')\subseteq\mathrm{Id}_\Omega(\mathcal{A}\otimes\mathcal{B}).
$$
\end{Lemma}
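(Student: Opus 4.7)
The plan is to realize $\mathcal{A}\otimes\mathcal{B}$ as a quotient of a subalgebra of a direct power of $\mathcal{A}'\otimes\mathcal{B}'$, which places it in $\mathrm{var}_\Omega(\mathcal{A}'\otimes\mathcal{B}')$ and yields the desired inclusion of identities. The hypothesis $\mathrm{Id}_\Omega(\mathcal{A}')\subseteq\mathrm{Id}_\Omega(\mathcal{A})$ says exactly that $\mathcal{A}$ lies in the variety generated by $\mathcal{A}'$, so Birkhoff's HSP theorem (valid for arbitrary $\Omega$-algebras) supplies an $\Omega$-subalgebra $S\subseteq(\mathcal{A}')^I$, with componentwise operations, together with a surjective $\Omega$-homomorphism $\pi\colon S\twoheadrightarrow\mathcal{A}$. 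Analogously, one gets $T\subseteq(\mathcal{B}')^J$ and a surjection $\rho\colon T\twoheadrightarrow\mathcal{B}$.

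The technical heart of the argument is the natural linear map
$$\Phi\colon(\mathcal{A}')^I\otimes(\mathcal{B}')^J\longrightarrow(\mathcal{A}'\otimes\mathcal{B}')^{I\times J},\qquad(a_i)_i\otimes(b_j)_j\longmapsto(a_i\otimes b_j)_{(i,j)},$$
which I claim is an injective $\Omega$-homomorphism. Injectivity is a straightforward linear-algebra exercise over $\mathbb{F}$: write any alleged kernel element as a finite sum $\sum_k u_k\otimes v_k$ with $\{v_k\}$ linearly independent in $(\mathcal{B}')^J$, and conclude that every $u_k$ vanishes. To see that $\Phi$ intertwines each $\omega\in\Omega_n$, unpack the definitions from \Cref{grassenv}: operations on a tensor product of $\Omega$-algebras are performed slotwise, while operations on direct powers are performed coordinatewise, so both $\Phi(\omega(\cdots))$ and $\omega(\Phi(\cdots))$ collapse to the same tuple $\bigl(\omega(a_1^{i},\dots,a_n^{i})\otimes\omega(b_1^{j},\dots,b_n^{j})\bigr)_{(i,j)}$; the nullary case is handled by the canonical convention $\omega_{\mathcal{A}'\otimes\mathcal{B}'}=\omega_{\mathcal{A}'}\otimes\omega_{\mathcal{B}'}$.

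Granting this, the tensor product $\pi\otimes\rho\colon S\otimes T\twoheadrightarrow\mathcal{A}\otimes\mathcal{B}$ is itself an $\Omega$-homomorphism (by the same slotwise unpacking), and $\Phi$ embeds $S\otimes T$ inside $(\mathcal{A}'\otimes\mathcal{B}')^{I\times J}$. Hence $\mathcal{A}\otimes\mathcal{B}$ lies in $HSP(\mathcal{A}'\otimes\mathcal{B}')=\mathrm{var}_\Omega(\mathcal{A}'\otimes\mathcal{B}')$ and therefore satisfies every polynomial identity of $\mathcal{A}'\otimes\mathcal{B}'$. I expect the main obstacle to be the verification that $\Phi$ is an $\Omega$-homomorphism: this is genuinely fiddly bookkeeping, because ``tensor product'' appears in three guises (of $\Omega$-algebras, of indexed vector spaces, of homomorphisms) that must be reconciled for every arity, including the nullary one. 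Once that compatibility is nailed down, the HSP reformulation makes everything else routine, and the finite-dimensionality hypothesis on the four algebras plays no role beyond ensuring we are dealing with honest $\Omega$-algebras.
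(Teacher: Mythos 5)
Your argument is correct in substance, and it is worth noting that the paper does not actually prove this lemma: it only cites the associative case (\cite[Theorem 7.2.3]{AGPR}) and asserts that ``the same argument holds valid'' for $\Omega$-algebras. What you have written is essentially that standard argument made explicit in universal-algebraic language: the hypothesis $\mathrm{Id}_\Omega(\mathcal{A}')\subseteq\mathrm{Id}_\Omega(\mathcal{A})$ places $\mathcal{A}$ in $\mathrm{var}_\Omega(\mathcal{A}')=HSP(\mathcal{A}')$ (Birkhoff's theorem applies verbatim to linear $\Omega$-algebras, viewing scalar multiplications as unary operations), and the heart of the matter is that $H$, $S$ and $P$ are compatible with the tensor product of $\Omega$-algebras. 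Your verification that $\Phi$ intertwines the operations is fine, since each $\omega$ acts multilinearly on a tensor product, so checking on decomposable tensors suffices. The one step you should not wave at is the injectivity of $\Phi$ when $I$ and $J$ are infinite: the usual ``take $\{v_k\}$ linearly independent and conclude $u_k=0$'' argument does not apply directly, because what you actually know is the coordinatewise vanishing $\sum_k u_k^i\otimes v_k^j=0$ for each $(i,j)$, and linear independence of the $v_k$ in $(\mathcal{B}')^J$ does not give linear independence of the $v_k^j$ in any single coordinate. The fix: for finitely many $u_1,\dots,u_n$ the relation spaces $R_{I_0}=\{c\in\mathbb{F}^n:\sum_kc_ku_k^i=0\text{ for all }i\in I_0\}$ form a decreasing family of subspaces of $\mathbb{F}^n$ as $I_0$ grows, so some finite $I_0$ already witnesses independence; choosing dual functionals supported on $I_0$ (and similarly a finite $J_0$ for the $v_k$) and applying $\alpha_i\otimes\beta_j$ to the $(i,j)$-components of $\Phi(t)$ shows $\Phi(t)\neq0$. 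With that patch the proof is complete; you are also right that finite-dimensionality is never used, and that the only genuinely signature-dependent point is the compatibility of $\Phi$ with nullary operations, which holds under the convention $\mu_{\mathcal{A}'\otimes\mathcal{B}'}=\mu_{\mathcal{A}'}\otimes\mu_{\mathcal{B}'}$.
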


As a consequence, combining the previous proposition and \Cref{fdgrassid}, we have an extension of \Cref{casemnfz2}.
\begin{Prop}
Let $\mathcal{A}$ be an $\Omega$-simple algebra, and let $m$, $m'\in\mathbb{N}$. Then, $\lfloor\frac{m}2\rfloor=\lfloor\frac{m}2\rfloor$ implies $\mathrm{Id}_\Omega(G_m(\mathcal{A}\otimes\mathbb{FZ}_2))=\mathrm{Id}_\Omega(G_{m'}(\mathcal{A}\otimes\mathbb{FZ}_2))$.\qed
\end{Prop}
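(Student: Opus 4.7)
The plan is to reduce the statement to two previously established facts: first, an $\Omega$-algebra isomorphism $G_m(\mathcal{A}\otimes\mathbb{FZ}_2)\cong\mathcal{A}\otimes G_m$, and second, the equality $\mathrm{Id}_\Omega(G_m)=\mathrm{Id}_\Omega(G_{m'})$ whenever $\lfloor m/2\rfloor=\lfloor m'/2\rfloor$, which is the remark immediately following \Cref{fdgrassid}.

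First I would unpack the Grassmann envelope of $\mathcal{B}:=\mathcal{A}\otimes\mathbb{FZ}_2$. By the construction recalled in \Cref{lem0}, one has $\mathcal{B}^0=\mathcal{A}\otimes 1$ and $\mathcal{B}^1=\mathcal{A}\otimes\varepsilon$, each canonically identified with $\mathcal{A}$ as an $\Omega_{>0}$-algebra. Hence as a vector space
$$G_m(\mathcal{B})=\mathcal{B}^0\otimes G_m^0\oplus\mathcal{B}^1\otimes G_m^1\cong\mathcal{A}\otimes(G_m^0\oplus G_m^1)=\mathcal{A}\otimes G_m,$$
and the compatibility noted at the start of \Cref{grassenv}, together with the definition of the $\Omega$-structure on $\mathcal{A}\otimes\mathbb{FZ}_2$, implies that this identification is an isomorphism of $\Omega$-algebras.

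Second, I would invoke the remark after \Cref{fdgrassid}: the assumption $\lfloor m/2\rfloor=\lfloor m'/2\rfloor$ yields $\mathrm{Id}_\Omega(G_m)=\mathrm{Id}_\Omega(G_{m'})$. Applying the preceding tensor-product lemma in both directions (with the first factor fixed at $\mathcal{A}$ and the second factor alternately $G_m$ and $G_{m'}$), we deduce
$$\mathrm{Id}_\Omega(\mathcal{A}\otimes G_m)=\mathrm{Id}_\Omega(\mathcal{A}\otimes G_{m'}).$$
Transporting this equality across the isomorphism of the first step produces the desired conclusion.

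The only real subtlety, and therefore the main obstacle, is verifying that the vector space identification $(a\otimes\varepsilon^i)\otimes g\mapsto a\otimes g$ (for $g\in G_m^i$) is indeed a homomorphism of $\Omega$-algebras. A parity bookkeeping argument suffices: applying $\omega\in\Omega_n$ to homogeneous generators of $G_m(\mathcal{B})$ produces $\omega(a_1,\ldots,a_n)\otimes\varepsilon^{i_1+\cdots+i_n}\otimes g_1\cdots g_n$, while on the tensor-product side one gets $\omega(a_1,\ldots,a_n)\otimes g_1\cdots g_n$; these agree under the map because the homogeneity constraint defining $G_m(\mathcal{B})$ forces $\varepsilon^{i_1+\cdots+i_n}$ to match the parity of $g_1\cdots g_n$. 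Note that $\Omega$-simplicity of $\mathcal{A}$ plays no role in the computation itself; it is imposed to ensure, via \Cref{lem0}, that $\mathcal{A}\otimes\mathbb{FZ}_2$ is $\mathbb{Z}_2$-graded-simple, so that the statement is the faithful analog of \Cref{casemnfz2}.
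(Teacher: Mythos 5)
Your proof is correct and follows essentially the same route the paper intends: the statement is given there with only the remark that it follows by combining the tensor-product lemma with \Cref{fdgrassid}, via the identification $G_m(\mathcal{A}\otimes\mathbb{FZ}_2)\cong\mathcal{A}\otimes G_m$ that you verify explicitly. Your parity bookkeeping for that isomorphism supplies a detail the paper leaves implicit, but the argument is the same.
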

\begin{Problem}
Does the converse of the previous corollary hold?
\end{Problem}

From \Cref{gradedsimplealg}, and repeating the reasoning of the associative case, we have an extension of \Cref{g1_ordinary}.
\begin{Prop}
Let $\mathcal{A}$ be a finite-dimensional simple $\Omega$-algebra. Then $\mathcal{A}\otimes\mathbb{FZ}_2=\mathcal{A}\oplus\varepsilon\mathcal{A}$ is a $\mathbb{Z}_2$-graded-simple $\Omega$-algebra. Moreover, for any $m\in\mathbb{N}$,
$$
\mathrm{Id}(G_1(\mathcal{A}\otimes\mathbb{FZ}_2))=\mathrm{Id}(G_m(\mathcal{A}))=\mathrm{Id}(G(\mathcal{A})),
$$
but $\mathcal{A}\otimes\mathbb{FZ}_2\not\cong\mathcal{A}$.\qed
\end{Prop}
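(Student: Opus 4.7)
The plan is to mirror the proof of \Cref{g1_ordinary} in the $\Omega$-algebra setting. Graded-simplicity of $\mathcal{A}\otimes\mathbb{FZ}_2$ is immediate from \Cref{lem0}, and the non-isomorphism $\mathcal{A}\otimes\mathbb{FZ}_2\not\cong\mathcal{A}$ is forced by the dimension count $\dim(\mathcal{A}\otimes\mathbb{FZ}_2)=2\dim\mathcal{A}$. The nontrivial content is the three-fold equality of polynomial identities, which I will reduce to a general principle about tensoring with a unital commutative associative $\mathbb{F}$-algebra.

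Since $\mathcal{A}$ is viewed with the trivial $\mathbb{Z}_2$-grading (so $\mathcal{A}^1=0$), the definition of the Grassmann envelope gives directly $G_m(\mathcal{A})=\mathcal{A}\otimes G_m^0$ and $G(\mathcal{A})=\mathcal{A}\otimes G^0$. To handle $G_1(\mathcal{A}\otimes\mathbb{FZ}_2)$, I will build an explicit $\Omega$-algebra isomorphism
\[
\varphi\colon \mathcal{A}\otimes G_1 \longrightarrow G_1(\mathcal{A}\otimes\mathbb{FZ}_2),\qquad a\otimes 1\mapsto (a\otimes 1)\otimes 1,\quad a\otimes e_1\mapsto (a\otimes\varepsilon)\otimes e_1.
\]
Verifying that $\varphi$ intertwines an $n$-ary $\omega\in\Omega$ splits into three cases according to the number of inputs taken from $\mathcal{A}\otimes e_1$: zero (trivial), exactly one (the $\varepsilon$-factor in $\mathbb{FZ}_2$ and the $e_1$-factor in $G_1$ match up in degree so both sides coincide), and at least two (both sides vanish since $e_1^2=0$).

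With these identifications, it suffices to prove that for any unital commutative associative $\mathbb{F}$-algebra $C$, one has $\mathrm{Id}_\Omega(\mathcal{A}\otimes C)=\mathrm{Id}_\Omega(\mathcal{A})$. Because $\mathrm{char}\,\mathbb{F}=0$, multilinear identities suffice. The inclusion $\subseteq$ is immediate from the $\Omega$-embedding $a\mapsto a\otimes 1$. For the reverse inclusion, I will prove by induction on the construction of a multilinear polynomial $f(x_1,\ldots,x_n)\in\mathbb{F}_\Omega\langle X\rangle$ the evaluation formula
\[
f(a_1\otimes c_1,\ldots,a_n\otimes c_n)=f(a_1,\ldots,a_n)\otimes c_1\cdots c_n,
\]
where commutativity and associativity of $C$ allow the $c_i$-factors produced by nested $\Omega$-operations to be collected in a canonical order. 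Applying this with $C=G_1$, $G_m^0$, $G^0$ in turn chains together the three identity sets, completing the proof.

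The main obstacle is the careful induction establishing both that $\varphi$ is an $\Omega$-homomorphism and that the multilinear evaluation formula above holds for arbitrary signatures. In each case one must track how auxiliary factors combine under possibly non-associative, arbitrary-arity operations; the essential ingredient making the reordering legitimate is the commutativity and associativity of the scalar factor algebra, together with the explicit definition of the $\Omega$-structure on $\mathcal{A}\otimes G$ from \Cref{grassenv}.
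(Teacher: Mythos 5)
Your proposal is correct and follows essentially the route the paper intends: graded-simplicity via \Cref{lem0}, the identification $G_1(\mathcal{A}\otimes\mathbb{FZ}_2)\cong\mathcal{A}\otimes G_1$ and $G_m(\mathcal{A})=\mathcal{A}\otimes (G_m)^0$, and the fact that tensoring with a unital commutative associative algebra does not change the $\Omega$-identities (the $\Omega$-analogue of \Cref{g1_ordinary} and \Cref{superPI_mnfz}, whose evaluation formula is the sign-free case of \Cref{supergrid}). The paper leaves these verifications to the reader, so your spelled-out induction is a faithful filling-in of the same argument rather than a different approach.
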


\end{document}